\documentclass[reqno]{amsart}
%%%%%%%%%%%%%%%%%%%%%%%%%%%%%%%%%%%%%%%%%%%%%%%%%%%%%%%%%%%%%%%%%%%%%%%%%%%%%%%%%%%%%%%%%%%%%%%%%%%%%%%%%%%%%%%%%%%%%%%%%%%%%%%%%%%%%%%%%%%%%%%%%%%%%%%%%%%%%%%%%%%%%%%%%%%%%%%%%%%%%%%%%%%%%%%%%%%%%%%%%%%%%%%%%%%%%%%%%%%%%%%%%%%%%%%%%%%%%%%%%%%%%%%%%%%%
\usepackage{amssymb,amsmath,amsthm}
\usepackage{mathrsfs}
    \usepackage{a4wide}
\usepackage{amscd}
\usepackage{amsfonts}
\usepackage{amssymb}
\usepackage{latexsym}
\usepackage{color}
\usepackage{esint}
\usepackage{graphicx}
\usepackage{float}
\graphicspath{{Figures/}}

\usepackage{graphicx}
\usepackage[breaklinks=true,bookmarks=false]{hyperref}
\usepackage{cleveref}

\usepackage[makeroom]{cancel}

\setcounter{MaxMatrixCols}{10}

%   \oddsidemargin  0.5 pt
%   \evensidemargin 0.5 pt
%    \textwidth      6.4 in
%   \headheight     -.2in
%  % \topmargin      -0.1in
%  \textheight = 9.0 in
%TCIDATA{OutputFilter=LATEX.DLL}
%TCIDATA{Version=5.00.0.2552}
%TCIDATA{<META NAME="SaveForMode" CONTENT="1">}
%TCIDATA{LastRevised=Tuesday, February 03, 2015 20:01:52}
%TCIDATA{<META NAME="GraphicsSave" CONTENT="32">}

%   \oddsidemargin   -0.15 in
%  \evensidemargin  -0.15 in
%  \textwidth      6.84 in
%  \headheight     -.19in
%   \topmargin      -0.2in
% \textheight = 9.4in

\newtheorem{theorem}{Theorem}

\newtheorem{definition}{Definition}
\newtheorem{lemma}{Lemma}
\newtheorem{proposition}[theorem]{Proposition}
\newtheorem{remark}{Remark}

\let\e=\varepsilon

\let\h=v

\let\p=\partial

\let\O=\Omega

\let\o=\omega

\numberwithin{equation}{section}

\let\hide\iffalse

\newcommand{\nablaA}{\nabla^{\text{a}}}
\newcommand{\nablaS}{\nabla^{\text{sym}}}

\newcommand{\R}{\mathbb{R}}
\renewcommand{\P}{\mathbf{P}}

\newcommand{\be}{\begin{equation}}
\newcommand{\bm}{\begin{multline}}
\newcommand{\ee}{\end{equation}}
\newcommand{\dd}{\mathrm{d}}

\newcommand{\xb}{x_{\mathbf{b}}}
\newcommand{\tb}{t_{\mathbf{b}}}

\newcommand{\Div}{\text{div} \, }

\newcommand{\Bes}{\begin{eqnarray*}}
\newcommand{\Ees}{\end{eqnarray*}}
\newcommand{\Be}{\begin{equation} }
\newcommand{\Ee}{\end{equation}}

%\pagestyle{plain}

% \numberwithin{equation}{subsection}
%\numberwithin{theorem}{section}

\newtheorem{assumption}{Assumption}

\def\p{\partial}

\def\O{\Omega}
\def\R{\mathbb{R}}

\def\B{\begin{equation}}
\def\E{\end{equation}}
\def\BN{\begin{eqnarray*}}
\def\EN{\end{eqnarray*}}

\usepackage{cite}

\makeatletter
\@namedef{subjclassname@2020}{%
  \textup{2020} Mathematics Subject Classification}
\makeatother
%%%%

\begin{document}
\title[Boltzmann equation with mixed boundary condition]{Boltzmann equation with mixed boundary condition}

\author[H.-X. Chen]{Hongxu Chen}
\address[HXC]{Department of Mathematics, The Chinese University of Hong Kong, Shatin, N.T., Hong Kong}
\email{hongxuchen.math@gmail.com}

\author[R.-J. Duan]{Renjun Duan}
\address[RJD]{Department of Mathematics, The Chinese University of Hong Kong, Shatin, N.T., Hong Kong.}
\email{rjduan@math.cuhk.edu.hk}
 
\date{\today}
\subjclass[2020]{35Q20, 35B40}

%35Q20  	Boltzmann equations
%35B40  	Asymptotic behavior of solutions to PDEs

\keywords{Boltzmann equation, global existence, long time asymptotics, mixed boundary conditions}

\begin{abstract} 
We study the Boltzmann equation in a smooth bounded domain featuring a mixed boundary condition. Specifically, gas particles experience specular reflection in two parallel plates, while diffusive reflection occurs in the remaining portion between these two specular regions. The boundary is assumed to be motionless and isothermal. Our main focus is on constructing global-in-time small-amplitude solutions around global Maxwellians for the corresponding initial-boundary value problem. The proof relies on the $L^2$ hypocoercivity at the linear level, utilizing the weak formulation and various functional inequalities on the test functions, such as Poincar\'e and Korn inequalities. It also extends to the linear problem involving Maxwell boundary conditions, where the accommodation coefficient can be a piecewise constant function on the boundary, allowing for more general bounded domains. Moreover, we develop a delicate application of the $L^2-L^\infty$ bootstrap argument, which relies on the specific geometry of our domains, to effectively handle this mixed-type boundary condition. 
\end{abstract}

\maketitle

\section{Introduction}
In this paper we consider the initial-boundary value problem on the Boltzmann equation for rarefied gas contained in a smooth bounded domain $\Omega$ in $\R^3$:
\Be\label{1.1}
\p_t F + v\cdot \nabla_x F =  Q(F,F),
%\ \ \text{in} \ \  %(t,x,v) \in 
%\R_+ \times \O \times \R^3,
\Ee 
where $F=F(t,x,v)\geq 0$ stands for the velocity distribution function of gas particles with velocity $v=(v_1,v_2,v_3)\in \R^3$ at time $t\geq 0$ and position $x=(x_1,x_2,x_3)\in \Omega\subset \R^3$, and the initial and boundary conditions are to be specified later. The Boltzmann collision term is a bilinear integral operator acting only on velocity variable and for the hard sphere model it reads as 
\Be\notag%\label{Q_B}
Q (G,F)(v):=\int_{\mathbb{R}^3}\int_{\mathbb{S}^2}
|(v-u) \cdot \o|
%B(v-u,\o)
%^{\mathcal{K}}q_0(\theta)
     \big[G(u')F(v')-G(u)F(v) \big]\dd \omega \dd u.
\Ee
with
\begin{equation*}
%\label{ }
v'=v-(v-u)\cdot \omega\omega,\quad u'=v+(v-u)\cdot\omega\omega,
\end{equation*}
which satisfies the conservation laws of momentum and energy for intermolecular elastic collisions:
\begin{equation*}
%\label{ }
v'+u'=v+u,\quad |v'|^2+|u'|^2=|v|^2+|u|^2.
\end{equation*}
Through the paper we consider only the hard sphere collision kernel $|(v-u)\cdot \omega|$ but other kinds of collision kernels under the Grad's angular cutoff assumptions could also be studied in a similar way, cf.~\cite{Duan}. 

When the rarefied gas is confined in a container, the interaction between the gas and the boundary plays a key role. To the end, we assume that the boundary is motionless. To describe the boundary condition, we denote the boundary of the phase space as
\begin{equation*}%\label{gamma_bdr}
\gamma:= \{(x,v)\in \p\O \times \mathbb{R}^3\}.
\end{equation*}
Let $n=n(x)$ be the outward normal direction at $x\in\p\O$. We decompose $\gamma$ as
\begin{equation*}%\label{gamma_decompose}
\begin{split}
    &\gamma_- = \{(x,v)\in \p\O\times \mathbb{R}^3 : n(x)\cdot v < 0\},  \\
    & \gamma_+ = \{(x,v)\in \p\O\times \mathbb{R}^3 : n(x)\cdot v > 0\}, \\
    &\gamma_0 = \{(x,v)\in \p\O\times \mathbb{R}^3 : n(x)\cdot v = 0\}.
\end{split}    
\end{equation*}

The Boltzmann equation holds a crucial position within collisional kinetic theory, attracting a vast amount of literature dedicated to exploring its well-posedness theory with physical boundary conditions, cf.\cite{CerBook,mischler,asano,DV,guiraud,G2}. In a pioneering work, Guo \cite{G} developed an $L^2-L^\infty$ framework that established global well-posedness and exponential convergence to the global Maxwellian. These results encompass boundary conditions including inflow, diffuse reflection, specular reflection, and bounce-back reflection. When considering the non-isothermal boundary, \cite{EGKM,EGKM2} investigated a steady Boltzmann equation with dynamical stability, where a constructive method was introduced to establish $L^2$ hypocoercivity and $L^6$ control of the macroscopic quantities. It is worth noting that achieving the $L^\infty$ bound and well-posedness in the case of pure specular reflection requires more intricate assumptions and techniques. Specifically, the boundary is assumed to be strictly convex and analytic in \cite{G}, and in \cite{KL}, a triple iteration technique was developed to relax the analytic assumption.

When it comes to boundaries that exhibit both specular and diffuse reflection properties, the Maxwell boundary condition and the generalized diffuse boundary condition serve as suitable models for capturing these phenomena. These conditions represent a linear and nonlinear combination of specular and diffuse reflections, respectively, through the accommodation coefficients. In the case of the Maxwell boundary condition, global well-posedness for the Boltzmann equation is proven in \cite{briant} under certain constraints on the accommodation coefficient. 

%{\color{red}
For general smooth accommodation coefficients, a recent progress by Bernou et al \cite{Bernou} utilized the Korn's inequality from \cite{DV2} to construct a remarkable symmetric Poisson system (as described in Lemma \ref{lemma:sym_poisson}). This symmetric Poisson system plays a pivotal role in establishing the $L^2$ bound for the momentum component. Notably, the presence of the smooth coefficient $\alpha(x)$ in the boundary condition introduced in Lemma \ref{lemma:sym_poisson} provides the connection to the Maxwell boundary condition, leading to $L^2$ hypocoercivity of the linear problem. Such elliptic system also has been recently applied in \cite{chen2023} to construct an $L^6$ control of the macroscopic quantities with pure specular boundary condition.
%}

Turning to the generalized diffuse boundary, also known as the Cercignani-Lampis boundary in \cite{C,CIP}, the first author of this paper \cite{chen} demonstrated local well-posedness with a general accommodation coefficient and global well-posedness, dynamical stability with a small fluctuation assumption regarding the accommodation coefficient and wall temperature. In the context of the transport equation and the linear Boltzmann equation, a wider range of accommodation coefficients are allowed to achieve global convergence solutions, as discussed in \cite{bernou2022, bernou2023}.

The extensive study outlined above encompasses various scenarios, including well-posedness results for boundary conditions that lie between pure specular and diffuse reflection. However, it does not address the case where the boundary condition is a mixture of these two types, in other words, a portion of the boundary follows pure specular reflection while another portion follows diffuse reflection. Specifically ones may ask about the well-posedness for the initial-boundary value problem on the Boltzmann equation in bounded domains with a general Maxwell boundary condition such that the accommodation coefficient can be a piecewise constant function on the boundary. The investigation of well-posedness under such boundary conditions has remained an open question. In this paper, we aim to tackle this problem for a quite special shape of the boundary.

To the end we consider the following mixed boundary where gas particles are specularly reflected in two parallel plates and diffusively reflected in the rest portion between those two specular parts. Without loss of generality, we assume that two parallel plates are respectively located at $x_1=\pm L$ and the diffuse portion lies in $-L< x_1< L$. Precisely, for $x\in \partial\Omega$ with $-L< x_1< L$, we impose the diffuse boundary condition with constant temperature:
\begin{align}
  F(x,v)|_{\gamma_-:-L<x_1<L}  &  = \sqrt{2\pi}\mu(v) \int_{n(x)\cdot u>0} F(x,u) (n(x)\cdot u) \dd u,  \ -L<x_1<L,  
  \label{diffuse}
\end{align}
where $\mu$ corresponds to the normalized global Maxwellian:
\begin{align}\label{Maxwellian}
\mu(v):= \frac{1}{(2\pi)^{3/2}} e^{-\frac{|v|^2}{2}}.
\end{align}
For $x\in \partial\Omega$ with $x_1=-L$ and $x_1=L$, we impose the specular boundary condition:
\begin{align}
  F(x,v_1,v_2,v_3)|_{\gamma_-: x_1 \in \{-L,L\}}  & = F(x,-v_1,v_2,v_3), \ x_1=-L, L.  
  \label{specular}
\end{align}
For brevity we also denote the diffuse boundary portion and the specular boundary portion respectively as
\begin{equation}\label{bdr_portion}
\p \O_1 = \{x\in \p\O: x_1 \in (-L,L)\}, \ \ \p \O_2 = \{x \in \p\O: x_1 \in \{-L,L\}\}.
\end{equation}
Then the boundary is $\p\O = \p\O_1 \cup \p\O_2$. With the special boundary shape, we assume the domain is $C^1$.

In the standard perturbation framework, we seek for the solution of the form $F = \mu + \sqrt{\mu}f$ for the problem \eqref{1.1}, \eqref{diffuse} and \eqref{specular}. Then, we may reformulate it as 
\begin{equation}
\label{f_eqn}
\left\{\begin{aligned}
  &\p_t f + v\cdot \nabla_x f +  \mathcal{L} f  =  \Gamma(f,f) \quad \text{in }(0,\infty)\times \Omega\times \R^3,\\
   &            f(t,x,v)|_{\gamma_-}       = \sqrt{2\pi}\sqrt{\mu(v)} \int_{n(x)\cdot u>0} f(t,x,u)\sqrt{\mu(u)} |n(x)\cdot u| \dd u \quad \text{for }x\in \p\O_1, \\ 
  &          f(t,x,v)|_{\gamma_-}    = f(t,x,-v_1,v_2,v_3) \quad \text{for }x\in \p\O_2,
\end{aligned}\right.
\end{equation}
where the linearized collision operator $\mathcal{L}$ and nonlinear collision operator $\Gamma(\cdot,\cdot)$ are respectively given by
\begin{align}\label{def.L}
\mathcal{L}f=-\mu^{-1/2}\left\{Q(\mu,\sqrt{\mu}f)+Q(\sqrt{\mu}f,\mu)\right\},
%\notag
\end{align}
and
\begin{align}\label{def.Ga}
\Gamma (g,f)=\mu^{-1/2} Q(\sqrt{\mu}g,\sqrt{\mu}f).
\end{align}

We denote a velocity weight as 
\begin{equation*}%\label{weight}
w(v):= e^{\theta |v|^2}, \ \ 0<\theta < \frac{1}{4}.    
\end{equation*}
The main result of this paper is stated as follows.

\begin{theorem}\label{thm:linfty}
Assume $\O$ is bounded and smooth, where the boundary is given by \eqref{bdr_portion}. There is a constant $\delta>0$ such that if $F_0(x,v):=\mu+\sqrt{\mu}f_0(x,v)\geq 0$ satisfies $\int_{\Omega}\int_{\R^3}\sqrt{\mu}f_0(x,v)\,\dd v\dd x=0$ and
\begin{equation}\label{initial_condition}
\Vert wf_0\Vert_\infty < \delta,
\end{equation}
then there exists a unique solution $f(t,x,v)$ to the problem \eqref{f_eqn} with the initial condition $f(0,x,v)=f_0(x,v)$ such that $F(t,x,v):=\mu+\sqrt{\mu}f(t,x,v)\geq 0$, $\int_{\Omega}\int_{\R^3}\sqrt{\mu}f(t,x,v)\,\dd v\dd x\equiv 0$, and also the following estimate holds true:
%. Moreover, there exists constant $C>0$ and $\lambda > 0$ such that the solution decay exponentially fast in time:
\begin{equation*}%\label{linfty_decay}
\Vert wf(t)\Vert_\infty \leq 2Ce^{-\lambda t}\delta,
\end{equation*}
for all $t\geq 0$, where $C$ and $\lambda>0$ are constants independent of $t$.

\end{theorem}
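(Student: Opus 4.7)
The plan is to implement the $L^2$--$L^\infty$ bootstrap strategy of Guo, specialized to the mixed boundary geometry. First I establish exponential $L^2$ decay for the linear problem by hypocoercivity, then I upgrade to an $L^\infty$ bound via the backward trajectory and Duhamel representation, and finally I close the nonlinear problem by a standard contraction argument exploiting the smallness of $\|wf_0\|_\infty$.

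For the $L^2$ step, I would combine the coercivity of $\mathcal{L}$ on the microscopic part $\ip f$ with control of the macroscopic part $\P f$ via the weak formulation tested against a solution of the symmetric Poisson system of Lemma \ref{lemma:sym_poisson}. The boundary decomposition $\p\O=\p\O_1\cup\p\O_2$ is encoded in the Maxwell-type accommodation coefficient by taking $\a(x)=1$ on the diffuse part $\p\O_1$ and $\a(x)=0$ on the specular plates $\p\O_2$; since the outward normal on $\p\O_2$ equals $\pm e_1$, only the $v_1$-odd modes are reflected there and the boundary flux from such modes vanishes. The resulting test functions, combined with Korn's inequality from \cite{DV2} and a Poincar\'e inequality (available because the zero-mean condition $\int\int \sm f\,\dd v\,\dd x=0$ is preserved), yield control of $\P f$ by the dissipation of $\ip f$ plus the diffuse boundary term. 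A standard interpolation then produces an inequality of the form
\begin{equation*}
\tfrac{\dd}{\dd t}\|f(t)\|_{L^2}^2 + c\,\|f(t)\|_{L^2}^2 \leq 0,
\end{equation*}
and thus $\|f(t)\|_{L^2}\leq e^{-\l t}\|f_0\|_{L^2}$.

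The $L^\infty$ step rests on the mild formulation along the backward stochastic cycle with both specular and diffuse reflections. On $\p\O_2$ the cycle simply reflects $v_1\mapsto -v_1$, so a specular bounce sequence drifts tangentially in $(x_2,x_3)$ with constant velocity $(v_2,v_3)$ until it either reaches $\p\O_1$ or grazes away. On $\p\O_1$ the particle is resampled against $\sm$, which is the mixing mechanism. Iterating the Duhamel formula $k$ times and splitting the collision kernel $K$ into an $L^\infty$ truncation and its small remainder, one controls the $L^\infty$ norm by (a) a small factor coming from a $k$-fold small-measure estimate on long trajectories that avoid diffuse resetting, plus (b) a contribution of the $L^2$ norm which decays by the previous step. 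For this to work one needs the geometric lemma that any trajectory with $(v_2,v_3)\neq 0$ hits $\p\O_1$ after at most $\lesssim L/|(v_2,v_3)|$ specular bounces, and that the exceptional set $\{(v_2,v_3)=0\}$ has measure zero and is further suppressed by the velocity weight $w$.

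The main obstacle I anticipate is precisely this trajectory analysis near the interface $\{x_1=\pm L\}\cap \p\O_1$ together with the uniform small-measure estimate across the $k$-fold cycle, which both rely essentially on the specific geometry of two parallel specular plates with a connected diffuse portion between them. A secondary technical point is arranging Lemma \ref{lemma:sym_poisson} so that its boundary conditions accommodate a piecewise-constant (and thus discontinuous) accommodation coefficient $\a$; this may require a density or mollification argument respecting the flat geometry of $\p\O_2$. Once these two steps are in place, the nonlinear result follows by the usual iteration: solving the linear problem with source $\G(f^n,f^n)$, using the pointwise bound $|w\G(f,g)|\lesssim \|wf\|_\infty\|wg\|_\infty \mu^{1/4}(v)$, and closing the contraction in the weighted $L^\infty$ norm. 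Nonnegativity of $F$ and the conservation $\int\int\sm f\,\dd v\,\dd x\equiv 0$ are then propagated from the initial data.
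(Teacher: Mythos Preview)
Your overall strategy matches the paper's: $L^2$ hypocoercivity via test functions built from the elliptic systems (including the mollification of the piecewise-constant $\alpha$ by a smooth $\beta\le\alpha$, which the paper carries out explicitly as Assumption~\ref{assumption:coeff}), then an $L^2$--$L^\infty$ bootstrap along stochastic cycles mixing specular and diffuse reflections, then nonlinear iteration. The $L^2$ part and the nonlinear closing are essentially as in the paper.

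For the $L^\infty$ step your emphasis is off, and you are underestimating where the real work lies. Your geometric lemma---that a trajectory with $(v_2,v_3)\neq 0$ reaches $\p\O_1$ after finitely many specular bounces---is correct and ensures the diffuse cycle is well defined, but it is not the crux. The decisive step in the bootstrap is the change of variables $v_i\mapsto X_i(s)$ (the position along the characteristic through possibly many specular bounces) that converts the $v_i$-integral into a spatial $L^2$ integral of $f$. The paper handles this by \emph{unfolding}: it writes $X_i(s)=\mathcal{M}\bigl(x_i-(t_i-s)v_i\bigr)$ where $\mathcal{M}$ is a piecewise isometry sending the extended slab $\{|x_1|\le L+NT_0\}$ back into $\O$ by successive reflections across $x_1=\pm L$; the Jacobian of $v_i\mapsto x_i-(t_i-s)v_i$ is $(t_i-s)^3$ and the Jacobian of $\mathcal{M}$ is $1$ on each of the $O(NT_0/L)$ pieces. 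This unfolding is precisely what replaces the triple iteration of \cite{KL} and is the reason the parallel-plate geometry is needed. Your proposal never articulates this step; without it, item~(b) in your outline has no mechanism. By contrast the ``$k$-fold small-measure estimate'' (the paper's Lemma~\ref{lemma:tk}) is routine and essentially unchanged from Guo's pure-diffuse argument, since the specular segments between diffuse hits play no role in its proof.
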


%\begin{remark}\label{remark:shape}
We would like to emphasize the importance of the special boundary conditions \eqref{diffuse} and \eqref{specular} in obtaining the $L^\infty$ estimate in Theorem \ref{thm:linfty}. In a general smooth bounded domain with specular reflection, the trajectory becomes intricate, making it necessary to employ a triple iteration technique in \cite{KL} to perform the change of variable within the $L^2-L^\infty$ framework developed in \cite{G}. In our specific boundary, where the specular portion is only located at $x_1 \in \{-L, L\}$, the trajectory between specular reflections becomes simplified. 
%\end{remark}

It is also important to note that the requirement for this specific boundary arises solely from the need to establish the $L^\infty$ estimate. In fact, the convexity assumption is not necessary, and for the $L^2$ estimate, we do not rely on such a choice of boundary. In other words, our proof for the $L^2$ hypocoercivity in Proposition \ref{prop:l2} remains valid for a general smooth domain without any restrictions on the position of the specular portion.
%\end{remark}

%\begin{remark}
%{\color{red}
The proof of the $L^2$ hypocoercivity in Proposition \ref{prop:l2} relies on choosing special test functions, which are constructed using the elliptic systems in Lemma \ref{lemma:poisson} and Lemma \ref{lemma:sym_poisson}. It is worth noting that both of these systems require smooth accommodation coefficients to achieve $H_x^2$ regularity. However, the accommodation coefficients in the boundary condition in Proposition \ref{prop:l2} is a piecewise and discontinuous function that only takes values in ${0,1}$. To overcome this difficulty, a key observation is that for the discontinuous function $\alpha(x)\in \{0,1\}$, it is possible to construct a smooth version of $\alpha(x)$ denoted as $\beta(x)$. This new function $\beta(x)$ satisfies the property $\beta(x)\leq \alpha(x)$ and is not identical to zero. By employing $\beta(x)$ in the elliptic systems of Lemma \ref{lemma:poisson} and Lemma \ref{lemma:sym_poisson}, we can ensure the smoothness of the solutions and establish $H^2_x$ regularity. Furthermore, the choice of $\beta(x)$ ensures that whenever $\alpha(x)=0$ (corresponding to the specular portion), we also have $\beta(x)=0$. This allows that the contribution of the specular portion vanishes in the boundary integral of the weak formulation, as expressed in \eqref{weak_formula}.

The proof of the $L^\infty$ estimate requires delicate analysis of the characteristics. In Definition \ref{def:sto_cycle_spec} and Definition \ref{def:sto_cycle_diffuse}, we introduce the concept of stochastic cycles for specular reflection and diffuse reflection, respectively. We consider specular reflection as an intermediate process between two diffuse reflections. It is well-known that the occurrence of a large number of interactions with diffuse reflection is of low probability (see \cite{G} and Lemma \ref{lemma:tk}). Consequently, we obtain a finite-fold integration characteristic formula, as shown in \eqref{initial} to \eqref{bdr_g_i}. The main difficulty lies in estimating the stochastic cycles involving specular reflection. As mentioned before, the specific boundary simplifies the trajectory. Furthermore, we can treat the specular reflection at $x_1 \in (-L, L)$ as an extension to $x_1 \in (-NL, NL)$ when the velocity of the characteristic is bounded. This allows us to convert the velocity integral into a spatial integral and perform a change of variables. In the $L^2-L^\infty$ framework, the Jacobian of this change of variable under the simplified trajectory is easy to compute. Further details can be found in the proof of Proposition \ref{prop:linfty}.
%}

%\end{remark}

%\begin{remark}
The analysis presented in this work provides insights to the future study of the Maxwell boundary conditions for general bounded domains, considering a general accommodation coefficient that can be a piecewise constant function on the boundary .
%\end{remark}

%\noindent\textbf{Outline.} 
The outline of this paper is given as follows. In Section \ref{sec:prelim}, we list several properties of the linear and nonlinear collision operator. In Section \ref{sec:l2}, we employ the test function method with Korn's inequality and Poincar\'e inequality to establish the crucial $L^2$ hypocoercivity. Finally, in Section \ref{sec:linfty}, we utilize a boot-strap argument to establish the $L^\infty$ estimate, thereby concluding Theorem \ref{thm:linfty}.

\medskip
\noindent\textbf{Notation.} Throughout the paper, we adapt the following notations:
\begin{align*}
 \Vert f\Vert_\infty   & = \Vert f(x,v)\Vert_{L_{x,v}^\infty(\O\times \mathbb{R}^3)}, \ \ \
|f|_\infty         = \Vert f(x,v)\Vert_{L^\infty_{x,v}(\p\O\times \mathbb{R}^3)} ,\\
   \Vert f\Vert_{L^2}       &= \Vert f\Vert_{L^2_{x,v}(\O\times \mathbb{R}^3)}, \ \ \ \Vert f\Vert_{L^2_x} = \Vert f\Vert_{L^2_x(\O)}, \ \ \ \Vert f\Vert_{L^2_\nu} = \Vert \nu(v) f \Vert_{L^2_{x,v}(\O\times \mathbb{R}^3)}, \\
  \int_\gamma f(x,v)\dd \gamma &  = \int_{\gamma} f(x,v)(n(x)\cdot v) \,\dd S_x \dd v ,\ \ \ 
|f|_{2,+}^2 = \int_{\gamma_+} |f(x,v)|^2 (n(x)\cdot v) \,\dd S_x \dd v .
\end{align*}
Here $\dd S_x$ corresponds to the surface integral on the boundary. 

\ \\

\section{Preliminary}\label{sec:prelim}

Recall the linearized Boltzmann operator $\mathcal{L}$ as in \eqref{def.L}. One can write 
\begin{equation}
\label{Lsplit}
\mathcal{L} f = \nu(v) f- Kf,
\end{equation}
where $\nu(v)$ is a velocity multiplication and $K$ is an integral operator.  
First we have the classical Grad estimate.

\begin{lemma}[\cite{R}]%\label{lemma:k_gamma}
The linearized Boltzmann operator $K$ in~\eqref{Lsplit} is given by
\[
Kf(x,v)=\int_{\mathbb{R}^3}\mathbf{k}(v,u)f(x,u)\,\dd u.
\]

The kernel $\mathbf{k}(v,u)$ satisfies:
\Be\notag%\label{k_varrho}
 |\mathbf{k}  (v,u)| \lesssim \mathbf{k}_\varrho (v,u), \   \ \mathbf{k}_\varrho (v,u) := e^{- \varrho |v-u|^2}/ |v-u|.
  \Ee

The collision frequency $\nu(v)$ in \eqref{Lsplit} satisfies
\begin{equation}\label{nu_bdd}
\nu(v) \sim\sqrt{|v|^2+1}.
\end{equation}

\end{lemma}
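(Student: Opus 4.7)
The plan is to perform the classical Grad decomposition of $\mathcal{L}$ into a multiplicative part and an integral part, evaluate the former directly, and use a Carleman-type change of variables for the latter.

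First, I would expand
\[
\mathcal{L}f(v) = -\mu^{-1/2}(v)\bigl[Q(\mu,\sqrt{\mu}f)+Q(\sqrt{\mu}f,\mu)\bigr](v)
\]
by writing each $Q$ as the difference of a gain and a loss term. The loss part of $Q(\mu,\sqrt{\mu}f)$ contributes exactly
\[
\nu(v)f(v) = f(v)\int_{\R^3}\!\!\int_{\S^2}|(v-u)\cdot\omega|\,\mu(u)\,\dd\omega\,\dd u,
\]
whereas the loss part of $Q(\sqrt{\mu}f,\mu)$ produces a term that, after the factor $\mu^{-1/2}(v)$ is applied and the identity $\mu(v)\mu(u)=\mu(v')\mu(u')$ is used, combines cleanly with the gain contributions and is absorbed into $Kf$. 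Using the elementary identity $\int_{\S^2}|(v-u)\cdot\omega|\,\dd\omega=\pi|v-u|$, I would then compute $\nu(v)=\pi\int_{\R^3}|v-u|\mu(u)\,\dd u$, and estimate this from above by $C(1+|v|)$ via $|v-u|\le|v|+|u|$ together with the Gaussian decay of $\mu$, and from below by a positive multiple of $(1+|v|)$ (for large $|v|$ one shifts $u\mapsto u+v$ and uses $|u+v|\ge |v|-|u|$; for bounded $|v|$ the integral is a strictly positive continuous function). This proves $\nu(v)\sim\sqrt{|v|^2+1}$ in \eqref{nu_bdd}.

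Next I would treat $K$. The easy piece is the loss-type contribution: after integrating out $\omega$, it yields a kernel proportional to $|v-u|\sqrt{\mu(v)\mu(u)}$, which is bounded by $C\,e^{-\varrho|v-u|^{2}}/|v-u|$ since $\sqrt{\mu(v)\mu(u)}$ can be split using $|v|^2+|u|^2\ge\tfrac12(|v-u|^2+|v+u|^2)$. The genuine gain terms, of the form
\[
\int_{\R^3}\!\!\int_{\S^2}|(v-u)\cdot\omega|\sqrt{\mu(u)}\,f(u')\,\sqrt{\mu(v')}\,\dd\omega\,\dd u
\quad\text{and its analog for }f(v'),
\]
require Carleman's change of variables: fix $v$, and parametrize the collision by $u'\in\R^3$ and by the point $v'$ in the plane through $u'$ orthogonal to $u'-v$. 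The Jacobian of $(u,\omega)\mapsto(u',v')$ produces a $|v-u'|^{-2}$ factor, and after integration over the hyperplane, one obtains a representation of the gain term as $\int_{\R^3}k(v,u')f(u')\,\dd u'$ with
\[
|k(v,u')|\;\le\; \frac{C}{|v-u'|}\,\exp\!\Bigl(-\tfrac{1}{8}|v-u'|^{2}-\tfrac{1}{8}\frac{(|v|^{2}-|u'|^{2})^{2}}{|v-u'|^{2}}\Bigr).
\]

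The final step is to dominate this by $e^{-\varrho|v-u|^{2}}/|v-u|$ for some $\varrho<\tfrac18$. Here I would use the standard inequality
\[
|v-u'|^{2}+\frac{(|v|^{2}-|u'|^{2})^{2}}{|v-u'|^{2}}\;\ge\;\bigl(|v-u'|+\bigl||v|-|u'|\bigr|\bigr)^{2}\;\gtrsim\;|v-u'|^{2},
\]
and combining a small portion of the exponent with the $\sqrt{\mu(u')}$ factor (when present) to generate the required Gaussian decay in $u$. Collecting the two kernel pieces gives $|\mathbf{k}(v,u)|\lesssim e^{-\varrho|v-u|^2}/|v-u|$.

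The main obstacle is the Carleman reduction together with the kernel bound: one must carry out the $(u,\omega)\mapsto(u',v')$ parametrization correctly, track the resulting Jacobian, and then pass from the mixed exponent involving $(|v|^{2}-|u'|^{2})^{2}/|v-u'|^{2}$ to a pure Gaussian in $v-u$. Everything else—the loss-term computation, the asymptotics of $\nu$, and the direct estimate of the loss-type kernel—is routine once this representation is in hand.
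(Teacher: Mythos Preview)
The paper does not prove this lemma: it is stated with a citation to Glassey~\cite{R} and no argument is given. Your proposal is the standard Grad computation, and it is essentially correct as an outline of the classical proof that appears in the cited reference.

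Two small remarks. First, in your last display you write
\[
|v-u'|^{2}+\frac{(|v|^{2}-|u'|^{2})^{2}}{|v-u'|^{2}}\;\ge\;\bigl(|v-u'|+\bigl||v|-|u'|\bigr|\bigr)^{2}\;\gtrsim\;|v-u'|^{2},
\]
but this chain is not needed: the left side is already $\ge |v-u'|^{2}$ trivially, and that alone suffices to drop to $e^{-\varrho|v-u|^{2}}/|v-u|$. The sharper mixed exponent with the factor $(|v|^2-|u'|^2)^2/|v-u'|^2$ is what one actually uses later (e.g.\ in the proof of Lemma~\ref{lemma:k_theta}), so in practice one keeps it rather than throwing it away, but for the statement as written your reduction is fine. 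Second, the phrase ``combining a small portion of the exponent with the $\sqrt{\mu(u')}$ factor (when present)'' is a bit vague; in the Carleman representation the Gaussian weight in the transverse variable is integrated out explicitly, and what remains is exactly the exponent you display, so no borrowing is necessary at that stage.
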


\begin{lemma}\label{lemma:k_theta}
Let $0\leq \theta < \frac{1}{4}$, denote $\mathbf{k}_\theta(v,u) := \mathbf{k}(v,u) \frac{e^{\theta |v|^2}}{e^{\theta |u|^2}}.$ There exists $C_\theta > 0$ such that
\begin{equation}\label{k_theta}
\int_{\mathbb{R}^3}  \mathbf{k}(v,u) \frac{e^{\theta |v|^2}}{e^{\theta |u|^2}}  \,\dd u  \leq \frac{C_\theta}{1+|v|}.
\end{equation}
Moreover, for $N\gg 1$, we have
\begin{equation}\label{k_N_upper_bdd}
\mathbf{k}_\theta(v,u) \mathbf{1}_{|v-u|> \frac{1}{N}} \leq C_N,
\end{equation}
and
\begin{equation}\label{K_N_small}
\int_{|u|>N \text{ or } |v-u|\leq \frac{1}{N}} \mathbf{k}_\theta(v,u) \,\dd u \lesssim \frac{1}{N} \leq o(1).
\end{equation}

\end{lemma}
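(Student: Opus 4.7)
My plan follows the standard template from the $L^2$-$L^\infty$ framework for the Boltzmann equation, which relies on a sharper form of Grad's kernel estimate than literally stated in the preceding lemma (the refined hard-sphere version being standard in the $L^2$-$L^\infty$ literature):
$$\mathbf{k}(v,u) \lesssim \frac{1}{|v-u|}\exp\Bigl(-\tfrac{1}{8}|v-u|^2 - \tfrac{1}{8}\tfrac{(|v|^2-|u|^2)^2}{|v-u|^2}\Bigr).$$
Only with the additional factor involving $(|v|^2-|u|^2)^2/|v-u|^2$ can the weight $e^{\theta|v|^2}/e^{\theta|u|^2}$ be absorbed into a good exponential decay; the purely Gaussian bound $e^{-\varrho|v-u|^2}/|v-u|$ alone leads, after completing the square, to a factor $e^{c|v|^2}$ which cannot be integrated.

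First I multiply the refined kernel bound by $e^{\theta(|v|^2-|u|^2)}$ and complete the square in the combined exponent. Writing $A=|v|^2-|u|^2$ and $B=|v-u|^2$, the identity
$$\theta A - \tfrac{A^2}{8B} - \tfrac{B}{8} = -\tfrac{(A-4\theta B)^2}{8B} - \bigl(\tfrac{1}{8}-2\theta^2\bigr)B,$$
together with the hypothesis $\theta<1/4$ (so that $\delta:=\tfrac{1}{8}-2\theta^2>0$), yields the pointwise bound
$$\mathbf{k}_\theta(v,u) \lesssim \frac{1}{|v-u|}\exp\Bigl(-\delta|v-u|^2 - \tfrac{(A-4\theta B)^2}{8B}\Bigr).$$
For \eqref{k_theta} I switch to spherical coordinates centered at $v$, writing $u=v+r\sigma$ with $r>0$ and $\sigma\in\mathbb{S}^2$; then $A-4\theta B=-r\bigl(2v\cdot\sigma+(1+4\theta)r\bigr)$, so $(A-4\theta B)^2/B=\bigl(2v\cdot\sigma+(1+4\theta)r\bigr)^2$. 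Parameterizing $\sigma$ by the angle with $v$ and substituting $s=2|v|\cos\phi+(1+4\theta)r$ reduces the $\mathbb{S}^2$-integral to a one-dimensional Gaussian with Jacobian $1/(2|v|)$, so it is bounded by $C/|v|$; the outer $r$-integral against $r\,e^{-\delta r^2}$ is a finite constant. This gives $\lesssim 1/|v|$ for $|v|\geq 1$, and combining with the trivial bound $\lesssim 1$ for $|v|\leq 1$ (obtained by discarding the second exponential) produces $C_\theta/(1+|v|)$.

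The pointwise bound \eqref{k_N_upper_bdd} is immediate from the same pointwise estimate, since on $\{|v-u|>1/N\}$ the singular factor is $<N$ while both exponentials are $\leq 1$. For the smallness \eqref{K_N_small}, over $\{|v-u|\leq 1/N\}$ spherical coordinates absorb $1/|v-u|$ into the Jacobian and yield $\int_0^{1/N} r\,dr \lesssim 1/N^2 \ll 1/N$; over $\{|u|>N\}$ I split according to whether $|v|\leq N/2$ (forcing $|v-u|\geq N/2$, so the Gaussian contributes $e^{-\delta N^2/4}$, much smaller than $1/N$) or $|v|>N/2$, in which case \eqref{k_theta} already gives $\lesssim 1/(1+|v|)\leq 2/N$. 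The main obstacle is the very first step: without the refined $-(|v|^2-|u|^2)^2/|v-u|^2$ term, the weighted kernel $\mathbf{k}_\theta$ cannot be dominated by any single unweighted $\mathbf{k}_{\varrho'}$, and the velocity integral blows up exponentially in $|v|$. Once this pointwise bound is in hand — which requires both the sharper Grad form and the numerical hypothesis $\theta<1/4$ — everything else is routine.
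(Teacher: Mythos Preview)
Your proposal is correct and follows essentially the same route as the paper: both derive the key pointwise bound on $\mathbf{k}_\theta$ by completing the square in the refined Grad estimate (the paper cites this as Lemma~3 of \cite{G} and records it in the slightly different but equivalent form $\mathbf{k}_\theta(v,u)\le\bigl[\tfrac{1}{|v-u|}+|v-u|\bigr]e^{-\e[|v-u|^2+|v\cdot(v-u)|]}$), then obtain \eqref{k_theta} from the extra angular factor, \eqref{k_N_upper_bdd} from the Gaussian decay in $|v-u|$, and \eqref{K_N_small} by the same $|v|\gtrless N/2$ split you describe. Your observation that the cruder bound $\mathbf{k}\lesssim e^{-\varrho|v-u|^2}/|v-u|$ from the preceding lemma is insufficient on its own is correct and worth noting.
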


\begin{proof}
The proof mostly follows from Lemma 3 in \cite{G}, where for $0\leq \theta < \frac{1}{4}$, we can find $\e = \e(\theta)$ such that
\begin{align}
    \mathbf{k}_\theta(v,u) \leq \big[\frac{1}{|v-u|}+ |v-u| \big]e^{-\e \big[|v-u|^2 + |v\cdot (v-u)| \big]}. \label{k_theta_bdd}
\end{align}    
Thus \eqref{k_theta} follows by the factor $e^{-\e|v\cdot (v-u)|}$.

Clearly, with the exponential decay in $|v-u|$, we conclude \eqref{k_N_upper_bdd}.

For \eqref{K_N_small}, directly applying \eqref{k_theta_bdd} we have
\begin{align*}
    &\int_{|v-u|\leq \frac{1}{N}} \mathbf{k}_\theta(v,u) \,\dd u \lesssim o(1)   .
\end{align*}
When $|u|>N$, we split the cases into $|v|>\frac{N}{2}$ and $|v|\leq \frac{N}{2}$. In the first case, \eqref{K_N_small} follows by applying \eqref{k_theta}. For the other case, we have $|v-u|>\frac{N}{2}$, then \eqref{K_N_small} follows from \eqref{k_theta_bdd}.
\end{proof}

Regarding the nonlinear collision operator $\Gamma(\cdot,\cdot)$ as in \eqref{def.Ga}, it is direct to derive the following estimate.

\begin{lemma}[Lemma 5 of \cite{G}]\label{lemma:gamma}
For $0\leq \theta < \frac{1}{4}$, we have
\begin{equation*}%\label{gamma_bounded}
 \Big\Vert \frac{e^{\theta |v|^2}}{\langle v\rangle} \Gamma(f,g)\Big\Vert_\infty \lesssim \Vert wf\Vert_\infty \Vert wg\Vert_\infty.
\end{equation*}

\end{lemma}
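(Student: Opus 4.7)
The plan is to split $\Gamma(g,f)=\Gamma_+(g,f)-\Gamma_-(g,f)$ into the standard gain and loss contributions and to estimate each pointwise in $v$. For the loss term, the prefactor $\mu^{-1/2}(v)$ cancels with the $\sqrt{\mu(v)}$ sitting on $f$, yielding
\[
\Gamma_-(g,f)(v) \;=\; f(v)\int_{\R^3}\int_{\S^2}|(v-u)\cdot\omega|\,\sqrt{\mu(u)}\,g(u)\,\dd\omega\,\dd u.
\]
Bounding $|g(u)|\leq\|wg\|_\infty e^{-\theta|u|^2}$ and $|(v-u)\cdot\omega|\leq|v-u|$, the $\omega$-integral is trivial and the $u$-integral is against the Gaussian density $\sqrt{\mu(u)}\,e^{-\theta|u|^2}$, which is integrable since $\theta<\tfrac14$. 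A crude bound $|v-u|\leq\langle v\rangle+|u|$ then gives an estimate of the form $C\langle v\rangle\|wg\|_\infty|f(v)|$. Multiplying by $e^{\theta|v|^2}/\langle v\rangle$ and using $e^{\theta|v|^2}|f(v)|\leq\|wf\|_\infty$ yields the claimed $L^\infty$ bound for $\Gamma_-$.

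For the gain term, the key algebraic step is collisional conservation of energy, $|u'|^2+|v'|^2=|u|^2+|v|^2$, which gives $\sqrt{\mu(u')\mu(v')}=\sqrt{\mu(u)\mu(v)}$. This lets the $\mu^{-1/2}(v)$ outside cancel one factor of $\sqrt{\mu(v)}$ and reduces the gain piece to
\[
\Gamma_+(g,f)(v) \;=\; \int_{\R^3}\int_{\S^2}|(v-u)\cdot\omega|\,\sqrt{\mu(u)}\,g(u')\,f(v')\,\dd\omega\,\dd u.
\]
Now I apply the pointwise weighted bound $|g(u')f(v')|\leq\|wg\|_\infty\|wf\|_\infty\,e^{-\theta(|u'|^2+|v'|^2)}$ and invoke energy conservation a second time to replace the exponent by $|u|^2+|v|^2$. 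The factor $e^{-\theta|v|^2}$ so produced cancels exactly the prefactor $e^{\theta|v|^2}$ on the left-hand side, reducing the remaining estimate to the same Gaussian integral in $u$ as in the loss case, bounded by $C\langle v\rangle\|wg\|_\infty\|wf\|_\infty$; division by $\langle v\rangle$ then completes the bound.

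The proof is essentially routine and uses no machinery beyond the explicit hard-sphere kernel and elementary Gaussian integrals. The one step that must be executed carefully is the bookkeeping around energy conservation in the gain term, so that the outer weight $e^{\theta|v|^2}$ is absorbed without loss into the product $g(u')f(v')$; the restriction $\theta<\tfrac14$ enters precisely to guarantee integrability of $\sqrt{\mu(u)}\,e^{-\theta|u|^2}$ and thus controls the residual $u$-integral uniformly in $v$.
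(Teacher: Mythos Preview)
Your argument is correct and is essentially the standard proof (the paper itself does not prove this lemma but simply cites it as Lemma~5 of \cite{G}); the gain/loss splitting together with the collisional energy identity $|u'|^2+|v'|^2=|u|^2+|v|^2$ is exactly what is used there. One small inaccuracy: the restriction $\theta<\tfrac14$ is \emph{not} what makes $\int\sqrt{\mu(u)}\,e^{-\theta|u|^2}\,\dd u$ finite, since $\sqrt{\mu(u)}\,e^{-\theta|u|^2}=(2\pi)^{-3/4}e^{-(1/4+\theta)|u|^2}$ is integrable for every $\theta\geq 0$; the hypothesis $\theta<\tfrac14$ is inherited from the global setting of the paper (so that $w(v)\sqrt{\mu(v)}$ decays) rather than being forced by this particular estimate.
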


\ \\

\section{$L^2$ estimate}\label{sec:l2}

It is well-known that the linearized collision operator $\mathcal{L}$ as in \eqref{def.L} has a 5 dimensional null space $\mathcal{N}$ spanned by the orthonormal basis 
\begin{align*}
    & \chi_0(v): = \sqrt{\mu(v)}, \ \ \chi_i(v): = v_i \sqrt{\mu(v)}, \ i=1,2,3, \ \ \chi_4(v): = \frac{|v|^2-3}{\sqrt{6}}\sqrt{\mu(v)}.
\end{align*}
We denote $\mathbf{P}f$ as the $L^2_v$ projection of $f$ onto $\mathcal{N}$:
\begin{align*}%\label{projection}
   \mathbf{P}f & := \sum_{i=0}^4 \langle f,\chi_i\rangle \chi_i =  a(x)\chi_0 + \sum_{i=1}^3 b_i(x)\chi_i + c(x) \chi_4,
\end{align*}
with
\begin{equation*}
   a(t,x)   : = \langle f,\chi_0\rangle ,  \ \ \mathbf{b}(t,x)=(b_1(t,x),b_2(t,x),b_3(t,x)), \ 
   b_i(t,x)   := \langle f,\chi_i\rangle
   \ \text{ for }
   i=1,2,3 ; \ \
   c(t,x)   := \langle f,\chi_4 \rangle, 
\end{equation*}
where we have taken the usual inner product on $L^2(\R^3_v)$: 
\begin{equation}
\label{velocity_inner}
 \langle f,g\rangle     = \int_{\mathbb{R}^3} f(v)g(v)\,\dd v.
\end{equation}

In this section we consider the solution to the linearized Boltzmann equation
\begin{align}
   \p_t f + v\cdot \nabla_x f + \mathcal{L}f & =   g,    
   \label{linear_f}
\end{align}
with an inhomogeneous source term $g=g(t,x,v)$ satisfying 
\begin{equation}\label{assumption_g}
\mathbf{P}g=0.   
\end{equation}

As mentioned before, the special boundary conditions are proposed for analyzing the characteristic for the $L^\infty$ estimate. For $L^2$ estimate of the linearized equation \eqref{linear_f}, we do not need such restriction. In the following proposition we give a decay-in-time $L^2$ estimate for general boundaries.

We consider the Maxwell boundary condition
\begin{equation}\label{maxwell}
f(t,x,v)|_{\gamma_-} = \alpha(x) \sqrt{2\pi}\mu(v) \int_{n(x)\cdot u>0} f(t,x,u)\sqrt{\mu(u)} (n(x)\cdot u) \,\dd u + (1-\alpha(x)) f(x,\mathfrak{R}_xv),
\end{equation}
where $\mathfrak{R}_x v = v-2(n(x)\cdot v)n(x)$ and $\alpha(x)\in [0,1]$ with $x\in \partial\Omega$ is the accommodation coefficient on the boundary.

Through the section we allow the accommodation coefficient to be discontinuous, subject to the following assumption:

\begin{assumption}\label{assumption:coeff}
For the accommodation coefficient, it holds that $\alpha(x)\in \{0,1\}$ for $x\in \p\O$, namely, $\alpha(x)$ takes only values either $0$ or $1$ on $\partial\Omega$. Moreover, there exists a smooth function $\beta(x)\in [0,1]$ on $\partial\Omega$ such that $\beta(x)\leq \alpha(x)$ for any $x\in \p\Omega$ and $\beta(x)\not\equiv 0$ on $\partial\Omega$.
\end{assumption}

With the above assumption, we have the following result.

\begin{proposition}\label{prop:l2}
Let $\O$ be an arbitrary bounded and $C^1$ domain. Let \eqref{assumption_g} hold for all $t>0$ and the accommodation coefficients satisfy Assumption \ref{assumption:coeff}, then
%Suppose that ,
%\begin{equation}\label{int_g_0}
%\iint_{\O\times \mathbb{R}^3} g(t,x,v)\sqrt{\mu(v)} \dd v = 0.
%\end{equation}
there exists $0<\lambda\ll 1$ such that if the initial data $f_0$ and source data $g$ satisfy
\[\Vert f_0\Vert_{L^2}^2 + \int_0^t \Vert e^{\lambda s}g(s)\Vert_{L^2}^2\,\dd s < \infty,\]
then there exists a unique solution to the problem
\begin{equation}\label{intial_value_problem}
\p_t f + v\cdot \nabla_x f + \mathcal{L}f =   g, \ f(0,x,v) = f_0(x,v)
\end{equation}
with $f$ satisfying the boundary condition in~\eqref{maxwell}. 
Moreover, we have that
\begin{equation}\label{l2_decay}
\Vert f(t)\Vert_{L^2}^2 \lesssim e^{-\lambda t} \Big\{\Vert f_0\Vert_{L^2}^2 + \int_0^t  \Vert e^{\lambda s}g(s)\Vert_{L^2}^2 \,\dd s  \Big\},\quad\forall\,t\geq 0.
\end{equation}

\end{proposition}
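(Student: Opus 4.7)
The plan is to combine a basic energy estimate for the linearized Boltzmann equation with a test-function argument for the macroscopic components $(a,\mathbf{b},c)$, using $\beta(x)$ rather than $\alpha(x)$ in the auxiliary elliptic problems so that smoothness is preserved while the specular portion contributes trivially. I would first multiply \eqref{linear_f} by $f$ and integrate over $\O\times\R^3$. Standard coercivity of $\mathcal{L}$ gives $\langle\mathcal{L}f,f\rangle\geq\delta_0\|\ip f\|_{L^2_\nu}^2$, and the transport term produces the boundary flux $\tfrac{1}{2}\int_\gamma |f|^2\,\dd\gamma$. On the portion where $\alpha(x)=0$ (specular reflection), the change of variables $v\mapsto\mathfrak{R}_xv$ shows this flux vanishes; on the portion where $\alpha(x)=1$ (pure diffuse reflection), a direct computation from \eqref{maxwell} yields $|f|_{2,+}^2-|f|_{2,-}^2\gtrsim |(1-P_\gamma)f|_{2,+}^2$, where $P_\gamma f$ is the outgoing Maxwell projection. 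Together this produces
\begin{equation*}
\frac{1}{2}\frac{\dd}{\dd t}\|f\|_{L^2}^2 + \delta_0 \|\ip f\|_{L^2_\nu}^2 + c_0 |(1-P_\gamma)f|_{2,+}^2 \leq \langle g,\ip f\rangle,
\end{equation*}
using $\P g=0$ to absorb $g$ against the microscopic part only.

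The main obstacle, and the heart of the argument, is recovering $\|\P f\|_{L^2}^2=\|a\|_{L^2_x}^2+\|\mathbf{b}\|_{L^2_x}^2+\|c\|_{L^2_x}^2$. I would follow the test-function / weak-formulation strategy of Esposito--Guo--Kim--Marra and Bernou et al. Namely, for each macroscopic variable I construct a test function of the form $\psi=\Phi(x)\cdot\chi(v)$, where $\Phi$ solves an elliptic problem on $\O$: for $c$ one uses a Poisson problem $-\Delta\Phi_c=c$ with a Robin-type boundary condition involving $\beta(x)$ (from Lemma \ref{lemma:poisson}); for $\mathbf{b}$ one uses the symmetric Poisson system of Lemma \ref{lemma:sym_poisson} (a Stokes-type system with symmetric-gradient Dirichlet/Robin data weighted by $\beta(x)$), which by Korn's inequality yields $H^2_x$ regularity and the control $\|\Phi_b\|_{H^2_x}\lesssim\|\mathbf{b}\|_{L^2_x}$; and for $a$ one uses a further Poisson problem tested against the remaining balance. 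Plugging these test functions into the weak formulation
\begin{equation*}
\int_0^t\!\!\!\int_{\O\times\R^3}\!\! f(-\partial_t\psi - v\cdot\nabla_x\psi + \mathcal{L}\psi)\,\dd v\dd x\dd s + \text{boundary terms} = \int_0^t\!\!\!\int f g\,\dd v\dd x\dd s + \text{data},
\end{equation*}
the choice of $\Phi$ makes the leading term reproduce $\|a\|^2$, $\|\mathbf{b}\|^2$, or $\|c\|^2$ up to coupling; the other terms are microscopic and absorbable by $\|\ip f\|_{L^2_\nu}^2$ via Cauchy--Schwarz.

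The delicate point is the boundary contribution in the weak formulation. After integrating $v\cdot\nabla_x\psi$ by parts, one obtains integrals of $f\psi$ over $\partial\O\times\R^3$. Since $\psi$ is built linearly from $\Phi$ whose boundary data contains a factor $\beta(x)$, and since Assumption \ref{assumption:coeff} gives $\beta(x)=0$ wherever $\alpha(x)=0$, the entire contribution from the specular portion of $\partial\O$ drops out. On the diffuse portion one decomposes $f|_\gamma$ into $P_\gamma f$ plus $(1-P_\gamma)f$; the projection part produces exactly the average of $f$ against $\beta\sqrt{\mu}$ which couples cleanly into the elliptic boundary data, while the remainder is controlled by $|(1-P_\gamma)f|_{2,+}^2$ already available from the energy inequality. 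This is where the requirement $\beta\not\equiv 0$ is essential: otherwise the elliptic systems degenerate and $(a,\mathbf{b},c)$ cannot be recovered.

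Finally I would form the standard hypocoercive functional $\mathcal{E}(t)=\|f\|_{L^2}^2+\kappa\mathcal{I}(t)$, where $\mathcal{I}$ collects the $a,\mathbf{b},c$ interaction functionals and $\kappa\ll 1$, obtaining
\begin{equation*}
\frac{\dd}{\dd t}\mathcal{E}(t) + \lambda_0 \big(\|\P f\|_{L^2}^2 + \|\ip f\|_{L^2_\nu}^2\big) \lesssim \|g\|_{L^2}^2
\end{equation*}
for some $\lambda_0>0$, which is equivalent (with $\kappa$ small) to $\frac{\dd}{\dd t}\mathcal{E}+\lambda\mathcal{E}\lesssim\|g\|_{L^2}^2$ for $\lambda\ll 1$. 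Multiplying by $e^{\lambda t}$ and integrating in time yields \eqref{l2_decay}. For existence and uniqueness, I would run a standard approximation scheme: solve an iteration $\partial_t f^{n+1}+v\cdot\nabla_x f^{n+1}+\nu f^{n+1}=Kf^n+g$ with the same boundary condition, use the explicit characteristic representation for each iterate to verify well-posedness on $[0,T]$, and pass to the limit using the uniform $L^2$ bound just derived; uniqueness follows from the same energy estimate applied to the difference of two solutions.
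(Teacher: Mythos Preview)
Your proposal is correct and follows essentially the same strategy as the paper: a basic $L^2$ energy identity furnishing control of $\|\ip f\|_{L^2_\nu}$ and $|(1-P_\gamma)f|_{2,+}$, combined with the test-function method (Poisson for $a,c$, the symmetric Poisson system with Korn's inequality for $\mathbf{b}$) using the smooth surrogate $\beta$ in place of $\alpha$ so that the specular contribution vanishes. The paper packages the closure as a time-integrated estimate on $\int_s^t\|\P f\|^2$ (its Lemma~\ref{lemma:L2}) applied to $e^{\lambda t}f$ rather than through a Lyapunov functional, and the precise boundary mechanism is that the specular change of variable leaves only normal components which the elliptic boundary conditions on $\Phi$ (Robin-with-$\beta$ for $c,\mathbf{b}$, pure Neumann for $a$) then annihilate---not that $\psi$ itself carries a factor $\beta$---but these are cosmetic differences.
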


\begin{remark}
The boundary condition in \eqref{diffuse} and \eqref{specular} corresponds to a special case of \eqref{maxwell}, where the accommodation coefficients $\alpha(x)=0$ when $x_1=-L,L$, and $\alpha(x)=1$ when $-L<x_1<L$. Clearly such $\alpha(x)$ satisfies Assumption \ref{assumption:coeff} by choosing a smooth function $\beta(x)$ such that
\begin{equation*}
    \beta(x) = \begin{cases}
        & 0 \text{ for } x_1\in \{-L,L\},\\
        & 1 \text{ for } x_1 \in [-L/2,L/2],\\
        & \in (0,1) \text{ for } x_1 \in (-L,-L/2) \cup (L/2,L).
    \end{cases}
\end{equation*}
\end{remark}

From the well-known Weyl's theorem, we have $\langle \mathcal{L}f,f\rangle \gtrsim \Vert (\mathbf{I}-\mathbf{P})f\Vert_{\nu}^2$. To prove the proposition above, we need to have the following $L^2$ control of the macroscopic quantities.

\begin{lemma}\label{lemma:L2}
Suppose $f$ solves the following equation with $0\leq \lambda\ll 1$,
\begin{equation}\label{linear_f_lambda}
\p_t f +v\cdot \nabla_x f + \mathcal{L}f  =  \lambda f + g,
\end{equation}
with boundary condition \eqref{maxwell} and accommodation coefficients $\alpha(x)$ satisfying Assumption \ref{assumption:coeff}. If $g$ satisfies \eqref{assumption_g}, then we have
\begin{align*}
   \int_s^t \Vert \mathbf{P}f(\tau)\Vert_{L^2}^2 \,\dd \tau & \lesssim G(t)-G(s) + \int_s^t \Vert (\mathbf{I}-\mathbf{P})f\Vert_{L^2}^2 \,\dd \tau + \int_s^t \Vert g(\tau)\Vert_{L^2}^2 \,\dd \tau\notag \\
   &  \ \ \ + \int_s^t  |\alpha(x)(1-P_\gamma) f(\tau)|_{2,+}^2 \,\dd \tau,
%\label{L2_Pf}
\end{align*}
where $G(t)$ is a functional of $f(t,x,v)$ such that $|G(t)|\lesssim \Vert f(t)\Vert_{L^2}^2$ holds true for any $t\geq 0$.

\end{lemma}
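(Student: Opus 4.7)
The plan is to work with the weak formulation of \eqref{linear_f_lambda} against a triple of carefully chosen macroscopic test functions that isolate $a$, $\mathbf{b}$, and $c$ in turn. For any smooth $\psi=\psi(t,x,v)$, testing \eqref{linear_f_lambda} and integrating by parts in $t$ and $x$ gives
\[
\iint \psi f\,\dd v\dd x\Big|_s^t - \int_s^t\!\!\iint (\p_t\psi + v\cdot\nabla_x \psi)\, f\,\dd v\dd x\dd\tau + \int_s^t\!\!\int_\gamma \psi f\,\dd\gamma\dd\tau = \int_s^t\!\!\iint \psi(\mathcal{L}f-\lambda f + g)\,\dd v\dd x\dd\tau.
\]
Following the scheme of Bernou et al., I would pick test functions of the form $\psi_c = (|v|^2-5)(v\cdot\nabla\phi_c)\sqrt{\mu}$, $\psi_{\mathbf{b}} = \sum_{i,j}(v_iv_j-\delta_{ij})\,\p_j\phi_{\mathbf{b}}^i\sqrt{\mu}$, and $\psi_a = (v\cdot\nabla\phi_a)\sqrt{\mu}$, so that each $\psi_\square$ lies in the microscopic range of $\mathcal{L}$ and the transport term $v\cdot\nabla_x\psi_\square$ produces the diagonal quadratic densities $c^2$, $|\mathbf{b}|^2$, $a^2$ respectively, up to crossterms that couple only to $(\mathbf{I}-\mathbf{P})f$ and can be absorbed by Cauchy--Schwarz.

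The $x$-potentials are supplied by the auxiliary elliptic problems referenced in the paper: $\phi_c$ and $\phi_a$ come from the scalar Poisson equation of Lemma \ref{lemma:poisson} with right-hand sides $c(\tau,\cdot)$ and $a(\tau,\cdot)$, while $\phi_{\mathbf{b}}$ comes from the symmetric Poisson system of Lemma \ref{lemma:sym_poisson} with right-hand side $\mathbf{b}(\tau,\cdot)$. Because $\alpha(x)\in\{0,1\}$ is discontinuous, these systems must be posed with the smooth surrogate $\beta(x)$ from Assumption \ref{assumption:coeff} in their boundary conditions, which by the cited lemmas yields $H^2_x$-regular potentials together with $\|\nabla\phi_\square\|_{H^1_x}\lesssim \|\square\|_{L^2_x}$. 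With these bounds, the endpoint terms $\iint\psi_\square f\,\dd v\dd x\big|_s^t$ assemble into a functional $G(t)$ satisfying $|G(t)|\lesssim\|f(t)\|_{L^2}^2$; the interior pairing against $\mathcal{L}f$ reduces to $(\mathbf{I}-\mathbf{P})f$ since $\psi_\square$ is purely microscopic; the source pairing is absorbed using $\mathbf{P}g=0$ and Cauchy--Schwarz; and the $\lambda f$ contribution is subcritical for $\lambda\ll 1$. Korn's inequality from \cite{DV2} applied in the $\mathbf{b}$ step and Poincar\'e's inequality in the $a$, $c$ steps (both activated precisely because $\beta\not\equiv 0$ on $\p\Omega$) then convert $\|\nabla\phi_\square\|_{L^2_x}^2$ back into $\|\square\|_{L^2_x}^2$.

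The main obstacle is the boundary integral $\int_s^t\int_\gamma \psi_\square f\,\dd\gamma\,\dd\tau$. On the specular portion, where $\alpha(x)=0$, the design $\beta(x)=0$ forces $\nabla\phi_\square\cdot n$ to vanish at the wall through the $\beta$-dependent boundary condition of the elliptic system, so the specular contribution drops out entirely; this is crucial because specularly reflected $f$ carries no usable gain on a general bounded $\Omega$. On the diffuse portion, I would split $f|_{\gamma_-}=P_\gamma f + (1-P_\gamma)f$ and exploit velocity-parity of $\psi_\square$ together with the boundary data for $\phi_\square$ to cancel the $P_\gamma f$ part, leaving a contribution which Cauchy--Schwarz controls by $|\alpha(x)(1-P_\gamma)f|_{2,+}^2$ (the factor $\alpha$ appears thanks to $\beta\leq\alpha$). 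Summing the three estimates for $c$, $\mathbf{b}$, $a$ in this order so that each successive estimate absorbs the crossterms generated by the previous one closes the inequality in the stated form.
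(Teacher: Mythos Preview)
Your outline captures the overall architecture correctly, and your use of the smooth surrogate $\beta$ in the elliptic boundary data is exactly the key device the paper relies on. However, there are two genuine gaps.

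\textbf{The time-derivative term $\int_s^t\!\!\iint \partial_t\psi_\square\,f$ is never addressed.} You write it into the weak formulation and then silently drop it when you say the endpoint terms assemble into $G(t)$. But $\phi_c,\phi_{\mathbf b},\phi_a$ are built from $c(\tau,\cdot),\mathbf b(\tau,\cdot),a(\tau,\cdot)$ and therefore depend on $\tau$, so $\partial_\tau\psi_\square$ involves $\partial_\tau\phi_\square$, which solves the \emph{same} elliptic problem with right-hand side $\partial_\tau c$, $\partial_\tau\mathbf b$, $\partial_\tau a$. You have no direct control on these time derivatives. The paper closes this by a \emph{second} round of testing: for each macroscopic quantity it first tests with $\phi\chi_0,\phi\chi_j,\phi\chi_4$ and lets $\triangle\to 0$ to extract a pointwise-in-$\tau$ bound of the form $\|\Phi_\square\|_{H^1_x}\lesssim \|a\|+\|\mathbf b\|+\|c\|+\|(\mathbf I-\mathbf P)f\|+|\alpha(1-P_\gamma)f|_{2,+}$ for $\Phi_\square=\partial_\tau\phi_\square$ (this is where Poincar\'e and Korn with $\beta$ are actually invoked). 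Only then can $J_1=\int\!\!\iint\partial_\tau\psi_\square f$ be absorbed. Without this step the argument does not close.

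\textbf{Your $\psi_a=(v\cdot\nabla\phi_a)\sqrt{\mu}$ does not work as stated.} First, it is entirely macroscopic (a combination of $\chi_1,\chi_2,\chi_3$), so your claim that ``each $\psi_\square$ lies in the microscopic range of $\mathcal L$'' is false. More seriously, with $\phi_a$ coming from the $\beta$-Poisson problem you have $\nabla\phi_a\cdot n=-\tfrac{\beta}{2-\beta}\phi_a\neq 0$ on the diffuse portion, and the leftover $P_\gamma f$ boundary contribution becomes a constant multiple of $\int_{\partial\Omega_1}\tfrac{\beta}{2-\beta}\phi_a\,z(x)\,\dd S_x$ with $z(x)=\sqrt{2\pi}\int_{n\cdot u>0}f\sqrt{\mu}(n\cdot u)\,\dd u$. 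This is not controlled by $|\alpha(1-P_\gamma)f|_{2,+}$. The paper instead takes $\psi_a=(|v|^2-10)(v\cdot\nabla\phi_a)\sqrt{\mu}$ together with the \emph{pure Neumann} potential $-\Delta\phi_a=a$, $\nabla\phi_a\cdot n=0$; the factor $|v|^2-10$ kills the $\chi_4$ cross-coupling in the bulk, and $\nabla\phi_a\cdot n=0$ kills both the specular and the $P_\gamma f$ diffuse boundary pieces. Your $\psi_c$ and $\psi_{\mathbf b}$ match the paper's; it is only the $a$-step that needs this correction.
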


\begin{remark}
We note that this lemma covers the case of $\lambda =0$, where \eqref{linear_f_lambda} becomes the same as \eqref{linear_f}. Here we include the term $\lambda f$ on the RHS of \eqref{linear_f_lambda} in order to study the decay-in-time $L^2$ estimate, where the equation of $e^{\lambda t}f$ will induce an extra term $\lambda e^{\lambda t}f$.
\end{remark}

To prove this lemma, we first cite some preliminary results. Denote $V:=H^1(\O)$. 

\begin{lemma}[\cite{Bernou}]\label{lemma:poisson}
Let the accommodation coefficient $\alpha(x)\in [0,1]$ be a Lipshitz function on $\p\O$ and $\alpha(x)$ be not identical to $0$ on $\partial\Omega$. 
\begin{itemize}
\item[(i)] The following Poincar\'e inequality holds true:
\begin{align}
   \Vert u\Vert_{L^2_x}^2 & \lesssim \Vert \nabla u \Vert_{L^2_x}^2 + \int_{\p\O} \frac{\alpha(x)}{2-\alpha(x)} u^2 \,\dd S_x. \label{poincare}
\end{align}

  \item[(ii)] Consider the following Poisson equation for a scalar function $u=u(x)$:
\begin{align*}
   -\Delta u & = h \text{ in }\O\notag\\
           (2-\alpha(x))\nabla u \cdot n + \alpha(x)u & = 0  \text{ on }\p\O,
          %\label{poisson}
\end{align*}
with $h\in L^2(\O)_x$. Then, there exists a unique solution $u\in V$ satisfying the following weak formulation:
\begin{align*}%\label{Poisson_weak}
    \int_\O \nabla u \cdot \nabla v  \,\dd x + \int_{\p\O} \frac{\alpha(x)}{2-\alpha(x)} u v \,\dd S_x = \int_\O h v \,\dd x  \text{ for all } v\in V,
\end{align*}
and moreover, we have $u\in H^2_x(\O)$  which satisfies the estimate
\begin{align}
  \Vert u\Vert_{H^2_x}  & \lesssim \Vert h\Vert_{L^2_x}. \label{poisson_h2}
\end{align}
%Hence $u$ satisfies \eqref{poisson} a.e.

\end{itemize}

\end{lemma}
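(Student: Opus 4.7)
The plan is to treat the two parts in sequence, using part (i) as an ingredient for the coercivity in part (ii).

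For (i), I would argue by compactness and contradiction. Suppose the inequality fails; then there is a sequence $(u_n) \subset H^1(\Omega)$ with $\|u_n\|_{L^2_x} = 1$ yet $\|\nabla u_n\|_{L^2_x}^2 + \int_{\partial\Omega} \frac{\alpha}{2-\alpha} u_n^2\,\dd S_x \to 0$. The sequence is bounded in $H^1(\Omega)$, so by Rellich--Kondrachov and continuity of the trace operator, up to a subsequence, $u_n \to u$ strongly in $L^2(\Omega)$ and in $L^2(\partial\Omega)$, with $u_n \rightharpoonup u$ weakly in $H^1(\Omega)$. Weak lower semicontinuity forces $\nabla u \equiv 0$, so $u$ is constant on each connected component of $\Omega$, with $\|u\|_{L^2_x} = 1$. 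Passing to the limit in the boundary integral yields $\int_{\partial\Omega} \frac{\alpha}{2-\alpha} u^2\,\dd S_x = 0$. Since $\frac{\alpha}{2-\alpha} \geq 0$ is continuous and not identically zero on $\partial\Omega$, and $u$ is a nonzero constant on the component whose boundary carries the support of $\alpha$, we reach a contradiction.

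For the existence and uniqueness statement in (ii), I would apply Lax--Milgram on $V = H^1(\Omega)$ to the bilinear form $a(u,v) := \int_\Omega \nabla u \cdot \nabla v\,\dd x + \int_{\partial\Omega} \frac{\alpha}{2-\alpha} u v\,\dd S_x$ and the linear functional $\ell(v) := \int_\Omega hv\,\dd x$. Continuity of $a$ follows from Cauchy--Schwarz and the trace theorem, using $\frac{\alpha}{2-\alpha} \in [0,1]$. Coercivity $a(u,u) \gtrsim \|u\|_{H^1}^2$ is a direct consequence of the Poincar\'e inequality from (i) together with the obvious control of $\|\nabla u\|_{L^2_x}$. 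This yields a unique weak solution $u \in V$ with $\|u\|_{H^1} \lesssim \|h\|_{L^2_x}$.

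For the $H^2$-regularity estimate \eqref{poisson_h2}, I would invoke standard elliptic regularity for the Robin problem. The boundary condition rewrites as $\partial_n u = -\frac{\alpha}{2-\alpha} u$ with Lipschitz coefficient (since $\alpha$ is Lipschitz and $2-\alpha \geq 1$ is bounded away from $0$). Interior regularity follows from Nirenberg's difference-quotient method applied to the weak formulation, giving $u \in H^2_{\mathrm{loc}}(\Omega)$. Boundary regularity follows from flattening $\partial\Omega$ in local charts and taking tangential difference quotients to control all tangential second derivatives in $L^2$; the remaining normal second derivative is recovered algebraically from $\partial_{nn} u = -\Delta_\tau u - h$. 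A partition of unity then yields $u \in H^2(\Omega)$ together with $\|u\|_{H^2_x} \lesssim \|h\|_{L^2_x} + \|u\|_{H^1_x} \lesssim \|h\|_{L^2_x}$. The main obstacle is this last step: the boundary coefficient is only Lipschitz and the paper's boundary is only $C^1$, which is borderline for $H^2$ theory, so some care is needed to verify that the particular geometry (and the use of the smoothed coefficient $\beta$ in the applications of the lemma) really gives the required regularity.
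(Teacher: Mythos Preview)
The paper does not prove this lemma at all: it is stated with the citation \cite{Bernou} and used as a black box, so there is no ``paper's own proof'' to compare against. Your sketch is the standard route one would follow to establish such a result, and it is essentially correct: the contradiction-plus-compactness argument for (i) and Lax--Milgram followed by elliptic regularity for (ii) are exactly what the cited reference carries out.

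Your closing caveat about regularity is well taken. The $H^2$ estimate genuinely needs more than $C^1$ boundary in general (typically $C^{1,1}$ suffices for Robin problems with Lipschitz coefficient), and the paper is somewhat loose on this point: Proposition~\ref{prop:l2} asserts only $C^1$, while Theorem~\ref{thm:linfty} says ``smooth''. In the applications within the paper the lemma is invoked only with the smoothed coefficient $\beta$ (never with the discontinuous $\alpha$), so the Lipschitz hypothesis is satisfied on that side; the domain regularity is simply being assumed sufficient. You have correctly identified where the analytic pressure lies, but for the purposes of this paper the result is imported wholesale from \cite{Bernou} rather than re-proved.
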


For a vector-valued function $\mathbf{u}=\mathbf{u}(x) = (u_1(x),u_2(x),u_3(x))\in \R^3$, we denote the Jacobian matrix and its symmetric part (a.k.a. the deformation tensor) respectively as
\Be\notag%\label{nablaS}
(\nabla \mathbf{u})_{ij} = \frac{\p u_i}{\p x_j}
%\begin{bmatrix}\p_{x_1} U_1 & \p_{x_2} U_1 & \p_{x_3 } U_1\\   \p_{x_1} U_2 & \p_{x_2} U_2 & \p_{x_3 } U_2\\   \p_{x_1} U_3 & \p_{x_2} U_3 & \p_{x_3 } U_3\end{bmatrix}.
, \ \  \ \ 
(\nablaS \mathbf{u})_{ij} = \frac{1}{2} \left(
\frac{\p u_i}{\p x_j} + \frac{\p u_j}{\p x_i}
\right).
\Ee
The antisymmetric part of the Jacobian matrix is denoted by 
\Be\notag%\label{nalbaA}
(\nablaA \mathbf{u})_{ij} = (\nabla \mathbf{u})_{ij} - (\nablaS \mathbf{u})_{ij}: = \frac{1}{2} \left(
\frac{\p u_i}{\p x_j} - \frac{\p u_j}{\p x_i}
\right).
\Ee
We have the following identity:
\Be\label{div_Delta}
 \Div(\nablaS \mathbf{u}) = {\color{black}\frac{1}{2}} \left(\Delta \mathbf{u} + \nabla \Div  \mathbf{u}\right).
 \Ee

Define a Hilbert space 
\begin{equation}\label{hilbert_X}
\mathcal{X} = \{\mathbf{u}\in H^1_x(\O)|\mathbf{u}(x)\cdot n(x) = 0 \text{ on }\p\O\}   . 
\end{equation}

\begin{lemma}[\cite{Bernou} and \cite{DV2}]\label{lemma:sym_poisson}
Consider the same accommodation coefficient $\alpha(x)$ as in Lemma \ref{lemma:poisson}. 
\begin{itemize}
  \item[(i)] The following Korn's inequality holds: for any $\mathbf{u}(x)\in H^1_x(\O)$ such that $\mathbf{u}(x)\cdot n(x)=0$ on $\p\O$, one has
\begin{equation}\label{korn}
\Vert \mathbf{u}\Vert_{H^1_x}^2 \lesssim \Vert \nablaS \mathbf{u}\Vert_{L^2_x}^2 +  \int_{\p\O} \frac{\alpha(x)}{2-\alpha(x)} |\mathbf{u}|^2 \,\dd S_x.
\end{equation}
\item[(ii)] Consider the following symmetric Poisson system
\begin{equation*}
%\label{sym_poisson}
\left\{\begin{aligned}
  \Div(\nablaS \mathbf{u})    &= h  \text{ in }\O \\
   \mathbf{u}\cdot n  &=   0 \text{ on }\p\O \\
   (2-\alpha(x)) [\nablaS \mathbf{u} n - (\nablaS \mathbf{u} : n\otimes n)n ] + \alpha(x) \mathbf{u}  &= 0 \text{ on } \p\O , 
\end{aligned}\right.
\end{equation*}
with $h\in L^2_x(\O) $. Then, there exists a unique solution $\mathbf{u}\in \mathcal{X}$ satisfying the following weak formulation:
\begin{align}
    & \int_\O \nablaS \mathbf{u} : \nablaS \mathbf{v} \dd x + \int_{\p\O} \frac{\alpha(x)}{2-\alpha(x)} \mathbf{u} \cdot \mathbf{v} \dd S_x = \int_{\O} \mathbf{v}\cdot h  \dd x \ \ \text{ for all }\mathbf{v}\in \mathcal{X},
    \label{sym_poisson_weak}
\end{align}
and moreover, we have $\mathbf{u}\in H^2_x(\O)$ which satisfies the estimate
\begin{align}
    & \Vert \mathbf{u}\Vert_{H^2_x}\lesssim \Vert h\Vert_{L^2_x}.  \label{sym_poisson_h2}
\end{align}
%Hence $u$ satisfies \eqref{sym_poisson} a.e. 
\end{itemize}

\end{lemma}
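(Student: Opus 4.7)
The plan is to prove (i) by a compactness/contradiction argument built on top of Korn's second inequality, and then use (i) as the coercivity ingredient for a Lax--Milgram solvability of the weak formulation in (ii), with $H^2$-regularity obtained from the identity \eqref{div_Delta} via standard elliptic regularity for a Lamé-type system with Robin-type boundary data.

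For the Korn estimate \eqref{korn}, I would start from Korn's second inequality on the bounded Lipschitz domain $\O$, namely $\Vert \mathbf{u}\Vert_{H^1_x}^2 \lesssim \Vert \mathbf{u}\Vert_{L^2_x}^2 + \Vert \nablaS \mathbf{u}\Vert_{L^2_x}^2$, which is classical. The issue is to absorb the $L^2$ term using the boundary penalty. I would argue by contradiction: suppose no such constant works, then there exists a sequence $\mathbf{u}_n\in \mathcal{X}$ with $\Vert \mathbf{u}_n\Vert_{H^1_x}=1$ and $\Vert \nablaS \mathbf{u}_n\Vert_{L^2_x}^2 + \int_{\p\O}\tfrac{\alpha}{2-\alpha}|\mathbf{u}_n|^2\,\dd S_x\to 0$. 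By Rellich--Kondrachov, along a subsequence $\mathbf{u}_n\to \mathbf{u}$ in $L^2$, and then Korn's second inequality upgrades this to strong convergence in $H^1$. The limit satisfies $\nablaS \mathbf{u}=0$, so $\mathbf{u}(x)=\mathbf{a}+\mathbf{b}\times x$ is a rigid motion; the closed condition $\mathbf{u}\cdot n=0$ is preserved in the limit, and the boundary term forces $\mathbf{u}=0$ on the set $\{\alpha>0\}$, which is nontrivial by the hypothesis $\beta\not\equiv 0\le\alpha$. Using unique continuation for rigid motions (a rigid motion vanishing on a relatively open piece of $\p\O$ vanishes identically), we get $\mathbf{u}\equiv 0$, contradicting $\Vert\mathbf{u}\Vert_{H^1_x}=1$.

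For (ii), I would set up the bilinear form
\[
  a(\mathbf{u},\mathbf{v}):=\int_\O \nablaS \mathbf{u}:\nablaS \mathbf{v}\,\dd x+\int_{\p\O}\frac{\alpha(x)}{2-\alpha(x)}\mathbf{u}\cdot \mathbf{v}\,\dd S_x
\]
on the Hilbert space $\mathcal{X}$ from \eqref{hilbert_X}. Continuity is a direct trace estimate, while coercivity $a(\mathbf{u},\mathbf{u})\gtrsim \Vert \mathbf{u}\Vert_{H^1_x}^2$ is exactly \eqref{korn}. Since $\mathbf{v}\mapsto \int_\O \mathbf{v}\cdot h\,\dd x$ is a continuous linear functional on $\mathcal{X}$ for $h\in L^2_x$, Lax--Milgram yields a unique $\mathbf{u}\in \mathcal{X}$ satisfying \eqref{sym_poisson_weak}. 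Formally integrating by parts recovers $\Div(\nablaS \mathbf{u})=h$ in $\O$ and the mixed Robin-type boundary condition $(2-\alpha)[\nablaS \mathbf{u}\,n-(\nablaS \mathbf{u}:n\otimes n)n]+\alpha \mathbf{u}=0$ on $\p\O$.

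For the $H^2$ estimate \eqref{sym_poisson_h2}, I would use the identity \eqref{div_Delta}, which recasts the equation as the Lamé-type system $\tfrac12(\Delta \mathbf{u}+\nabla\Div \mathbf{u})=h$. This operator is elliptic in the Agmon--Douglis--Nirenberg sense, and the boundary operator $\mathbf{u}\cdot n$ together with the tangential Robin condition covers it (this is the standard verification used in \cite{Bernou,DV2}). Because $\alpha$ is Lipschitz and $\p\O$ is smooth, classical elliptic regularity upgrades the weak $H^1$ solution to $H^2$ with $\Vert \mathbf{u}\Vert_{H^2_x}\lesssim \Vert h\Vert_{L^2_x}+\Vert \mathbf{u}\Vert_{H^1_x}\lesssim \Vert h\Vert_{L^2_x}$, where the last step uses the coercivity once more to absorb the lower-order norm. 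The main obstacle in this program is the verification that the mixed boundary operator satisfies the complementing (Lopatinski--Shapiro) condition for the Lamé system; this is precisely where the structural form of the condition (tangential part Robin, normal part Dirichlet) is essential, and it is the technical heart for which one invokes the careful analysis of \cite{DV2}.
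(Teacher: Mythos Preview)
The paper does not include a proof of this lemma; it is stated as a citation from \cite{Bernou} and \cite{DV2}, with only a contextual remark following it. Your sketch is the standard route taken in those references: a compactness/contradiction argument on top of Korn's second inequality for part (i), and Lax--Milgram plus ADN elliptic regularity for the Lam\'e-type system in part (ii).

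One minor correction: in your rigid-motion step you invoke ``$\beta\not\equiv 0\le\alpha$'', but the hypothesis of this lemma (inherited from Lemma~\ref{lemma:poisson}) is that $\alpha$ is Lipschitz on $\partial\Omega$ and $\alpha\not\equiv 0$, which already ensures $\{\alpha>0\}$ contains a relatively open piece of $\partial\Omega$. The smooth auxiliary $\beta$ from Assumption~\ref{assumption:coeff} plays no role here; it is introduced later precisely because the accommodation coefficient in the main theorem is piecewise constant and hence \emph{not} Lipschitz, so one substitutes $\beta$ for $\alpha$ when invoking this lemma. Apart from this misattribution of hypotheses, your outline is correct and matches the approach in \cite{Bernou}.
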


\begin{remark}
The Korn's inequality was investigated by Desvillettes and Villani \cite{DV2} under the condition that $\mathbf{u}(x)\cdot n(x)=0$ on $\partial\Omega$. The inequality can be stated as follows:
\[\inf_{R\in \mathcal{R}_\Omega} \Vert \nabla (\mathbf{u}-R)\Vert_{L^2_x}^2 \lesssim \Vert\nablaS \mathbf{u}\Vert_{L^2_x}^2,\]
where $\mathcal{R}_\Omega$ represents the space of centered infinitesimal rigid displacement fields, as defined in \cite{DV2}. Such Korn's inequality was applied in \cite{Bernou} to study the elliptic system \eqref{sym_poisson_weak} for $\alpha(x) \equiv 0$ with taking both axisymmetric and non-axisymmetric domain into consideration. The construction of the elliptic system further leads to the development of the $L^2$ hypocoercivity with pure specular reflection boundary in \cite{Bernou} and \cite{chen2023}.

The Korn's inequality in \eqref{korn} has been established in \cite{Bernou}, which leads to the development of the $L^2$ hypocoercivity with Maxwell boundary condition with smooth accommodation coefficients.
\end{remark}

\begin{proof}[\textbf{Proof of Lemma \ref{lemma:L2}}]
For consistency of the notation, in the proof we still denote $\p \O_1$ and $\p \O_2$ as the diffuse boundary portion and specular boundary portion respectively:
\begin{equation}\label{portion}
\p\O_1 = \{x\in \p\O: \alpha(x)=1\}, \ \ \p\O_2 = \{x\in \p\O: \alpha(x)=0\}.
\end{equation}

%{\color{red}
We will heavily use the following change of variable for the diffuse reflection and specular reflection respectively.
For the diffuse reflection
\begin{align*}
    & \int_{n(x)\cdot v<0} f(v) (n(x)\cdot v) \dd v,
\end{align*}
let $u = v-2(n(x)\cdot v)n(x)$, then the Jacobian is $1$ and $n(x)\cdot u = -n(x)\cdot v$, thus $v = u+2(n(x)\cdot v)n(x) = u - 2(n(x)\cdot u)n(x)$. We derive that
\begin{align}
    & \int_{n(x)\cdot v<0} f(v) (n(x)\cdot v) \dd v = -\int_{n(x)\cdot u>0} f(u-2(n(x)\cdot u)n(x)) (n(x)\cdot u) \dd u \notag\\
    & = -\int_{n(x)\cdot v>0} f(v-2(n(x)\cdot v)n(x)) (n(x)\cdot v) \dd v.  \label{cov_diffuse}
\end{align}
Similarly, for the specular reflection,
\begin{align}
    & \int_{n(x)\cdot v<0} g(v)f(v-2(n(x)\cdot v)n(x)) (n(x)\cdot v) \dd v \notag \\
    &= -\int_{n(x)\cdot u>0} g(u-2(n(x)\cdot u)n(x))f(u) (n(x)\cdot u) \dd u \notag\\
    & = -\int_{n(x)\cdot v>0} g(v-2(n(x)\cdot v)n(x)) f(v) (n(x)\cdot v) \dd v.  \label{cov_spec}
\end{align}
%}

We control the $L^2$ bound of the macroscopic quantities $a,\mathbf{b},c$ using special test functions with the weak formulation of~\eqref{linear_f}:
\begin{equation}\label{weak_formula} 
\begin{split}
 & \int_{\O\times \mathbb{R}^3} \{\psi f(t)-\psi f(s)\}\dd x \dd v - \int_s^{t}\int_{\O\times \mathbb{R}^3} f \p_\tau \psi \dd x \dd v \dd \tau \\
    & = \int_s^{t}\int_{\O\times \mathbb{R}^3} v \cdot \nabla_x \psi f\dd x \dd v \dd \tau - \int_s^{t} \int_{\gamma} \psi f \dd \gamma \dd \tau \\
    & - \int_s^t \int_{\O\times \mathbb{R}^3} (\mathcal{L} f)\psi \dd v \dd x \dd \tau + \int_s^t \int_{\O\times \mathbb{R}^3} (\lambda f + g)\psi \dd x \dd v \dd \tau \\
    & := I_1+I_2+I_3+I_4.  
\end{split}
\end{equation}

We denote the Burnette function of the space $\mathcal{N}^\perp$:
\begin{equation}\label{A_j}
\begin{split}
    \hat A_{ij}(v) &:=  \left(v_i v_j - \frac{\delta_{ij}}{3}|v|^2\right) \sqrt \mu  \text{ for } i,j=1,2,3,   \\
 \hat{B}_{j}   & :=  v_i \frac{|v|^2-5}{\sqrt{10}} \mu^{1/2} \text{ for } i = 1,2,3.
\end{split}
\end{equation}
A key property for these functions is
\begin{equation}\label{A_B_proj_0}
\mathbf{P}(\hat{A}_{ij}) = \mathbf{P}(\hat{B}_j)=0.
\end{equation}

By Assumption \ref{assumption:coeff}, we can choose a smooth $\beta(x)$ such that
\begin{equation}\label{beta_property}
\beta(x)\not \equiv 0, \ \ \beta(x) \leq \alpha(x), \ \ \Vert \beta\Vert_{C^1(\p\O)} <\infty.
\end{equation}

We will use the following basic facts for the velocity integral:
\begin{align*}
  \int_{-\infty}^{\infty} e^{-v^2/2} \dd v    = \sqrt{2\pi}, \ \ \int_{-\infty}^{\infty} v^2 e^{-v^2/2} \dd v  = \sqrt{2\pi} , \ \  \int_{-\infty}^{\infty} v^4 e^{-v^2/2} \dd v  = 3\sqrt{2\pi}.
\end{align*}
The above computation leads to the following facts for the integral of $\mu(v)$ in~\eqref{Maxwellian}: for $i=1,2,3$
\begin{equation}\label{fact}
\begin{split}
     \int_{\mathbb{R}^3}   \mu(v) \dd v & = 1 , \ \  \int_{\mathbb{R}^3}  |v_i|^2  \mu(v) \dd v  = 1, \\
    \int_{\mathbb{R}^3} |v_i|^2 |v_j|^2 \mu(v)\dd v & =1 \ \text{ for } i\neq j,  \ \  \int_{\mathbb{R}^3} |v_i|^4 \mu(v) \dd v  = 3. 
\end{split}
\end{equation}

\textbf{Step 1: estimate of $c(x)$.}

We choose $\psi = \phi(x)\chi_4$ in~\eqref{weak_formula}. Then the transport operator on $\psi$ becomes
\begin{equation}\label{transport_c_1}
v\cdot \nabla_x \psi =  \sum_{i=1}^3 \frac{\sqrt{6}}{3} \p_i \phi \chi_i + \sum_{i=1}^3  \frac{\sqrt{15}}{3} \p_i \phi \hat{B}_i.
\end{equation}
Take $[s,t] = [t-\triangle,t]$, LHS of~\eqref{weak_formula} becomes
\[LHS = \int_{\O} \{ c(t) - c(t-\triangle)\} \phi \dd x.\]
By~\eqref{transport_c_1} we have
\begin{equation}\label{I1_c}
I_1 = \int^t_{t-\triangle} \int_\O \Big\{\frac{\sqrt{6}}{3}(\mathbf{b}\cdot \nabla_x \phi) + \sum_{i=1}^3  \frac{\sqrt{15}}{3} \p_i \phi \langle \hat{B}_i, (\mathbf{I}-\mathbf{P})f \rangle \Big\} \dd x \dd \tau,    
\end{equation}
while $I_3= 0$ from the property of $\mathcal{L}$, and the contribution of $g$ in $I_4$ is 0 from \eqref{assumption_g}. The contribution of $\lambda f$ in $I_4$ can be bounded as
\begin{align}
    & \lambda \int_{\O\times \mathbb{R}^3} f \psi \dd x \dd v = \lambda \int_{\O\times \mathbb{R}^3} f \phi(x) \chi_4 \dd x \dd v   = \lambda \int_{\O}  \phi(x) c(x)\dd x  \notag\\
    & \leq \lambda \Vert \phi\Vert_{L^2_x}^2 + \lambda \Vert c\Vert_{L^2_x}^2 .  \label{I4_c}
\end{align}

Then for fixed $t$ we choose $\phi = \Phi_c$ such that
\begin{align}
  -\Delta \Phi_c  & = \p_\tau c(t) \text{ in } \O \notag\\
  (2-\beta(x))\nabla \Phi_c \cdot n + \beta(x) \Phi_c & = 0 \text{ on }\p\O. \label{test_c_partial}
\end{align}
Here $\beta(x)$ is defined in \eqref{beta_property}.

The contribution of the boundary integral in $I_2$ of~\eqref{weak_formula} becomes
\begin{align}
     & \int_\gamma \Phi_c(x)\chi_4 f =  \int_{\p\O} \int_{\gamma_+} \Phi_c(x)\chi_4 f + \int_{\p\O} \int_{\gamma_-} \Phi_c(x)\chi_4 f \notag\\
    & = \int_{\p\O_1}\int_{\gamma_+} \Phi_c(x) \chi_4 f - \int_{\p\O_1}\int_{\gamma_+} \Phi_c(x)\chi_4 P_\gamma f . \label{I2_first_bdd}
\end{align}
Here we have applied a change of variable $v\to v-2(n(x)\cdot v)n(x)$ in \eqref{cov_diffuse}, and the contribution of $\p\O_2$ vanishes by applying the specular boundary condition in \eqref{cov_spec}. Then we proceed the computation as
\begin{align}
  \eqref{I2_first_bdd}  & \lesssim o(1)\Vert \Phi_c\Vert_{L^2(\p\O)}^2  + |\alpha(x)(1-P_\gamma)f|_{2,+}^2 . \label{I2_c}
\end{align}

By~\eqref{I1_c}, \eqref{I4_c} and \eqref{I2_c}, with $\lambda \ll 1$ and the trace theorem, we apply integration by part to have
\begin{align*}
& \int_{\O} |\nabla \Phi_c|^2  \dd x+   \int_{\p\O} \frac{\beta(x)}{2-\beta(x)} |\Phi_c|^2 \dd S_x  = -\int_{\O} \Delta \Phi_c \Phi_c \dd x = \int_{\O} \Phi_c \p_\tau c \dd x      \\
 & \lesssim \Vert \nabla \Phi_c\Vert_{L^2_x}\big[\Vert \mathbf{b}\Vert_{L^2_x} + \Vert \mu^{1/4}(\mathbf{I}-\mathbf{P})f\Vert_{L^2} \big] + o(1) \Vert \Phi_c \Vert_{H^1_x}^2 + o(1)\Vert c\Vert_{L^2_x}^2 + |\alpha(x)(1-P_\gamma)f|_{2,+}^2 .
\end{align*}
In the first equality, we applied the boundary condition in \eqref{test_c_partial}. Since $\beta$ satisfies the condition in Lemma \ref{lemma:poisson}, we apply \eqref{poincare} to conclude that
\begin{align*}
    &\Vert \Phi_c\Vert_{H^1_x}^2 \lesssim  \int_{\O} |\nabla \Phi_c|^2  \dd x+   \int_{\p\O} \frac{\beta(x)}{2-\beta(x)} |\Phi_c|^2 \dd S_x \\
    & \lesssim  \Vert \nabla\Phi_c\Vert_{L^2_x}\big[\Vert b\Vert_{L^2_x} + \Vert \mu^{1/4}(\mathbf{I}-\mathbf{P})f\Vert_{L^2} \big] + |\alpha(x)(1-P_\gamma)f|_{2,+}^2 + o(1)\Vert c\Vert_{L^2_x}^2 .
\end{align*}

Hence we obtain the estimate for $\Phi_c$ as
\begin{align}
   \Vert \Phi_c\Vert_{H^1_x} \lesssim o(1)\Vert c\Vert_{L^2_x}+\Vert \mathbf{b}\Vert_{L^2_x} + \Vert \mu^{1/4}(\mathbf{I}-\mathbf{P})f\Vert_{L^2} + |\alpha(x)(1-P_\gamma)f|_{2,+}. \label{test_c_partial_H1} 
\end{align}

Next we rearrange~\eqref{weak_formula} to have
\begin{equation}\label{weak_formula_2}
    \begin{split}
        & - \int_s^t \int_{\O\times \mathbb{R}^3} v\cdot \nabla_x \psi f \dd x \dd v \dd \tau   \\
    & = \int_{\O\times \mathbb{R}^3}\{-\psi f(t) + \psi f(s)\} \dd x \dd v +  \int_s^t \int_{\O\times \mathbb{R}^3} f \p_\tau \psi \dd x \dd v  \dd \tau  - \int_s^t \int_{\gamma} \psi f  \dd \gamma  \dd \tau   \\
    & -  \int_s^t  \int_{\O\times \mathbb{R}^3} \mathcal{L} f \psi  \dd x \dd v \dd \tau + \int_s^t \int_{\O\times \mathbb{R}^3} (\lambda f + g) \psi \dd  x \dd v \dd \tau  \\
    & := \{G_\psi(t) - G_\psi(s)\} + J_1 + J_2 + J_3 + J_4 .   
    \end{split}
\end{equation}
We let $\phi_c$ be a solution of the following problem
\begin{align}
  -\Delta \phi_c   & = c(t) \text{ in } \O  \notag \\ 
  (2-\beta(x))\nabla \phi_c \cdot n + \beta(x)\phi_c & = 0 \text{ on }\p\O, \label{poisson_c}
\end{align}
and we choose 
\begin{align}
  \psi  & :=\psi_c = \sum_{i=1}^3 \p_i \phi_c v_i(|v|^2-5) \mu^{1/2} =\sum_{i=1}^3 \sqrt{10}  \p_i \phi_c \hat{B}_i. \label{test_c}
\end{align}
A direct computation leads to
\begin{align*}
   -v\cdot \nabla_x \psi_c &  = -\frac{5\sqrt{6}}{3}\Delta \phi_c \chi_4  - \sum_{i,j=1}^3 \p_{ij}^2\phi_c (\mathbf{I}-\P)(v_iv_j (|v|^2-5)\mu^{1/2}).
 %\label{transport_c}
\end{align*}
Thus the LHS of~\eqref{weak_formula_2} is
\begin{align}
  LHS  &  = \frac{5\sqrt{6}}{3} \int_s^t \int_{\O} c^2 \dd x\dd \tau - \sum_{i,j=1}^3 \int_s^t  \int_{\O} \p_{ij}^2 \phi_c  \langle (\mathbf{I}-\mathbf{P})f, v_iv_j (|v|^2-5)\mu^{1/2}\rangle \dd x \dd \tau \notag\\
  & = \frac{5\sqrt{6}}{3}\int_s^t \int_\O c^2 \dd x \dd \tau  + E_1, \label{LHS_c}
\end{align}
where, for any $\delta_1>0$, from~\eqref{poisson_h2},
\[|E_1|\lesssim \delta_1\int_s^t \Vert c\Vert_{L^2_x}^2\dd \tau + \frac{1}{ \delta_1}\int_s^t \Vert \mu^{1/4}(\mathbf{I}-\P)f\Vert_{L^2}^2 \dd \tau.\]
For $J_1$, using the fact that $\Phi_c = \p_\tau \phi_c $ and the estimate for $\Phi_c $ in~\eqref{test_c_partial_H1} we have
\begin{align}
  |J_1|  &  \lesssim  \delta_1\int_s^t \Vert \Phi_c \Vert^2_{H^1_x} \dd \tau  + \frac{1}{\delta_1} \int_s^t \Vert \mu^{1/4}(\mathbf{I}-\P)f\Vert^2_{L^2}  \dd \tau \notag\\
  & \lesssim \delta_1\int_s^t \Vert c\Vert_{L^2_x}^2 \dd \tau+ \delta_1 \int_s^t \Vert \mathbf{b}\Vert_{L^2_x}^2\dd \tau + \frac{1}{\delta_1} \int_s^t \Vert \mu^{1/4}(\mathbf{I}-\P)f\Vert^2_{L^2}  \dd \tau + \int_s^t|\alpha(x)(1-P_\gamma)f|_{2,+}^2 \dd \tau. \label{J1_c_bdd}
\end{align}
Then we apply boundary condition of $\phi_c$ and $f$ to compute $J_2$:
\begin{align*}
  \int_\gamma \psi f \dd \gamma  & = \int_{\gamma_+} \psi f \dd \gamma  +  \int_{\gamma_-} \psi f \dd \gamma.
\end{align*}
First we compute the specular part, we have
\begin{align}
 & \int_{\p\O_2} \bigg[ \int_{n(x)\cdot v>0} + \int_{n(x)\cdot v<0}\bigg] \big(|v|^2 -5\big)\sqrt{\mu} (v\cdot \nabla_x \phi_c) (n\cdot v) f  \dd v \dd S_x \notag \\
  & = 2 \int_{\p\O_2}\int_{n(x)\cdot v>0} \big(|v|^2 -5 \big) \sqrt{\mu} |n(x)\cdot v|^2 (n(x)\cdot \nabla \phi_c) f \dd v \dd S_x \notag\\
  & = -2\int_{\p\O_2}\int_{n(x)\cdot v>0} \big(|v|^2 -5 \big) \sqrt{\mu} |n(x)\cdot v|^2 \frac{\beta(x)}{2-\beta(x)}\phi_c(x) f \dd v \dd S_x = 0. \label{J2_c_first}
\end{align}
In the second line we have applied the change of variable $v\to v-2(n\cdot v)n$ in \eqref{cov_spec}, and the contribution of the first part $v$ in $ v-2(n\cdot v)n$ vanishes by applying the specular boundary condition on $x\in \p\O_2$. In the third line we have applied the boundary condition of $\phi_c$ in~\eqref{poisson_c}. In the last equality, we used $\beta(x)\leq \alpha(x)=0$ for $x\in \p\O_2$ from~\eqref{portion}.

Next we compute the diffuse reflection part, i.e, $x\in \p\O_1$. We have
\begin{align}
 & \int_{\p\O_1} \bigg[ \int_{n(x)\cdot v>0} + \int_{n(x)\cdot v<0}\bigg] \big(|v|^2 -5\big)\sqrt{\mu} (v\cdot \nabla_x \phi_c) (n\cdot v) f  \dd v \dd S_x \notag \\
 & = \int_{\p\O_1} \int_{n(x)\cdot v>0} \big(|v|^2 -5 \big) \sqrt{\mu} (v\cdot \nabla_x \phi_c) (n\cdot v)(f-P_\gamma f) \dd v \dd S_x \notag\\
  & + 2 \int_{\p\O_1}\int_{n(x)\cdot v>0} \big(|v|^2 -5 \big) \sqrt{\mu} |n(x)\cdot v|^2 (n(x)\cdot \nabla \phi_c) P_\gamma f \dd v \dd S_x \notag    \\
  & = \int_{\p\O_1} \int_{n(x)\cdot v>0} \big(|v|^2 -5 \big) \sqrt{\mu} (v\cdot \nabla_x \phi_c) (n\cdot v)(f-P_\gamma f) \dd v \dd S_x \notag \\
  & \lesssim \delta_1 \Vert \nabla \phi_c\Vert_{L^2(\p\O)}^2 +  \frac{1}{\delta_1} |\mathbf{1}_{x\in \p\O_1}(1-P_\gamma)f|_{2,+}^2  \lesssim \delta_1 \Vert c\Vert_{L^2_x}^2 + \frac{1}{\delta_1}|\mathbf{1}_{x\in \p\O_1}(1-P_\gamma)f|_{2,+}^2   .\label{J2_c_second}
\end{align}
In the first equality we have applied the change of variable $v\to v-2(n\cdot v)n$ in \eqref{cov_diffuse}. The third line vanishes by $\int_{n(x)\cdot v>0} (|v|^2-5)(n\cdot v)^2\mu \dd v= 0$. In the last inequality we applied~\eqref{poisson_h2} to~\eqref{poisson_c} with the trace theorem:
\begin{align*}
    & \Vert \nabla \phi_c \Vert^2_{L^2(\p\O)} \lesssim \Vert \phi_c \Vert^2_{H^2_x} \lesssim \Vert c\Vert_{L^2_x}^2.
\end{align*}

Collecting~\eqref{J2_c_first} and~\eqref{J2_c_second}, we conclude the estimate for $J_2$ as
\begin{equation}\label{J2_c_bdd}
|J_2| \lesssim \delta_1 \int_s^t \Vert c\Vert_{L^2_x}^2 \dd \tau + \frac{1}{\delta_1}\int_s^t |\mathbf{1}_{x\in \p\O_1}(1-P_\gamma)f|_{2,+}^2  \dd \tau.
\end{equation}

For $J_3$, due to the exponential decay factor $\mu^{1/2}$ in $\phi_c$, we have
\begin{align}
  |J_3|  &  \lesssim  \delta_1 \int_s^t \Vert c\Vert_{L^2_x}^2 \dd \tau + \frac{1}{\delta_1}\int_s^t \Vert \mu^{1/4} \mathcal{L} (\mathbf{I}-\mathbf{P})f\Vert_{L^2}^2 \dd \tau \notag\\
  &\lesssim  \delta_1 \int_s^t \Vert c\Vert_{L^2_x}^2 \dd \tau + \frac{1}{\delta_1} \int_s^t \Vert (\mathbf{I}-\mathbf{P})f\Vert_{L^2}^2 \dd \tau . \label{J3_c_bdd}
\end{align}
Here again we applied \eqref{poisson_h2} for $\nabla \phi_c$.

For the contribution of $g$ in $J_4$, similar to the computation in~\eqref{J3_c_bdd}, we have
\begin{align}
  \Big|\int_s^t \int_{\O\times \mathbb{R}^3} g\psi\dd x \dd v \dd \tau\Big|  & \lesssim \delta_1 \int_s^t \Vert c\Vert_{L^2_x}^2 \dd \tau + \frac{1}{\delta_1} \int_s^t \Vert g\Vert_{L^2}^2 \dd \tau. \label{J4_c_bdd}
\end{align}

For the contribution of $\lambda f$ in $J_4$, with the choice of $\phi$ in \eqref{test_c}, we apply \eqref{poisson_h2} to have
\begin{align}
    & \Big|\int_s^t \int_{\O\times \mathbb{R}^3} \lambda f\psi\dd x \dd v \dd \tau\Big| \lesssim \delta_1 \int_s^t \Vert \phi_c \Vert_{H^1_x}^2 \dd \tau + \frac{1}{\delta_1} \int_s^t \Vert \mu^{1/4} (\mathbf{I}-\mathbf{P})f \Vert_{L^2}^2 \dd \tau \notag\\
    & \lesssim \delta_1 \int_s^t \Vert c\Vert_{L^2_x}^2 \dd \tau+ \frac{1}{\delta_1} \int_s^t \Vert \mu^{1/4} (\mathbf{I}-\mathbf{P})f \Vert_{L^2}^2 \dd \tau . \label{J4_c_bdd_2}
\end{align}

Collecting~\eqref{LHS_c}, \eqref{J1_c_bdd}, \eqref{J2_c_bdd}, \eqref{J3_c_bdd}, \eqref{J4_c_bdd} and \eqref{J4_c_bdd_2}, we conclude the estimate of $c$ as follows: for some $C_1>0$,
\begin{align}
  \int_s^t \Vert c\Vert_{L^2_x}^2 \dd \tau & \leq C_1\Big[ G_c(t) - G_c(s) + \delta_1 \int_s^t \Vert \mathbf{b}\Vert_{L^2_x}^2 \dd \tau + \frac{1}{\delta_1} \int_s^t \Vert (\mathbf{I}-\mathbf{P})f\Vert_{L^2}^2 \dd \tau \notag\\
  & + \frac{1}{\delta_1}\int_s^t |\mathbf{1}_{x\in \p\O_1}(1-P_\gamma)f|_{2,+}^2 \dd \tau +  \frac{1}{\delta_1} \int_s^t \Vert g\Vert_{L^2}^2 \dd \tau\Big]. \label{c_bdd}
\end{align}

\textbf{Step 2: estimate of $\mathbf{b}(x)$.}
First we choose test function as
\Be\notag
%\label{test1}
 \psi_j = \phi_j({\color{black}t},x)\chi_j \ \  {\color{black}\text{for} \ \  j=1,2,3},
\Ee
and will take the test function in~\eqref{weak_formula} as
\begin{equation*}%\label{test_b_partial}
\psi = \sum_{j=1}^3 \psi_j = \sum_{j=1}^3 \phi_j(t,x)\chi_j.
\end{equation*}

{\color{black}Taking $s=t-\triangle$ and $ t=t $, the} LHS of~\eqref{weak_formula} becomes
\begin{align*}
   \text{LHS of~\eqref{weak_formula}}& = \sum_{j=1}^3 \int_{\O} \{ {\color{black} b_j(t ) - b_j(t-\triangle)}\} \phi_j \dd x. 
\end{align*}

From property of $\mathcal{L}$, we have $I_3=0$, and the contribution of $g$ in $I_4$ also vanishes. For the contribution of $\lambda f$ in $I_4$, we have
\begin{align}
    & \int_{\O\times \mathbb{R}^3} \lambda f \psi \dd x \dd v = \lambda\sum_{i=1}^3 \int_{\O} \phi_j(x)b_j(x) \dd x \leq \lambda \sum_{j=1}^3 [\Vert \phi_j\Vert_{L^2_x}^2 + \Vert b_j\Vert_{L^2_x}^2 ].   \label{I4_b_bdd}
\end{align}

Then {\color{black}for each $j=1,2,3,$}
\Be\begin{split}\label{v_nabla_psi}
    &  v\cdot \nabla_x \psi_j =  \sum_{i=1}^3 \p_i \phi_j(x) v_i v_j \mu^{1/2} \\
    & =  \Big[ \sum_{i=1}^3 \p_i \phi_j(x) \mu^{1/2}\left(v_iv_j - \frac{\delta_{ij}}{3}|v|^2\right) + \sum_{i=1}^3 \p_i \phi_j(x) \mu^{1/2} \frac{\delta_{ij}}{3}|v|^2  \Big] \\
    & =   \sum_{i=1}^3 \p_i\phi_j   \hat{A}_{ij}     + \p_j \phi_j \mu^{1/2} \left(\frac{|v|^2}{3}-1\right) + \p_j \phi_j \mu^{1/2}    \\
    & =    \p_j \phi_j  \left(\chi_0 + \frac{\sqrt{6}}{3}\chi_4\right)+ \sum_{i=1}^3 \p_i \phi_j \hat{A}_{ij} ,
\end{split}
\Ee
{\color{black} where we have used $\hat A_{ij}$ in \eqref{A_j}.}

For RHS of~\eqref{weak_formula}, from~\eqref{A_B_proj_0} and \eqref{v_nabla_psi}, we have
\begin{align*}
   I_1 &  =  \sum_{j=1}^3 \int_{t-\triangle}^{t} \int_{\O}  \{\p_j \phi_j (a+\frac{\sqrt{6}}{3} c ) \} \dd x \dd \tau \\
   & +  \sum_{j=1}^3 \int_{t-\triangle}^{t} \int_{\O} \sum_{i=1}^3  \p_i \phi_j  \langle \hat{A}_{ij} , (\mathbf{I}-\P)f \rangle \dd x \dd \tau.
\end{align*}
Taking ${\color{black}\triangle} \to 0 $ yields
\begin{align}
& \sum_{j=1}^3 \int_{\O} \phi_j(t,x) \p_\tau b_j(t,x)   \dd x    =   \sum_{j=1}^3\int_{\O}  \{\p_j \phi_j (t,x)  (a (t,x)  +\frac{\sqrt{6}}{3} c (t,x)   ) \} \dd x \notag\\
  & + \sum_{j=1}^3 \int_{\O} \sum_{i=1}^3  \p_i \phi_j  \langle \hat{A}_{ij} ,(\mathbf{I}-\P)f (t,x)  \rangle \dd x - \sum_{j=1}^3\int_\gamma  f (t,x)  \phi_j (t,x)  \chi_j   \dd \gamma. \label{delta_to_0}
\end{align}

Let $\mathbf{\Phi_b} = (\Phi_b^1, \Phi_b^2, \Phi_b^3)$ solve
\begin{align}
  -\Div(\nablaS \mathbf{\Phi_b})  &    = \frac{1}{2}\p_\tau  \mathbf{b}(t) \text{ in }\O \notag
  \\
  \mathbf{\Phi_b} \cdot n& = 0 \label{sym_poisson_b_partial} \text{ on }\p\O\\
 (2-\beta(x)) \big[ \nablaS \mathbf{\Phi_b}  n  - (\nablaS \mathbf{\Phi_b} : n \otimes n)n \big] + \beta(x)\mathbf{\Phi_b} &= 0 \text{ on }\p\O. \notag 
\end{align}

By Lemma \ref{lemma:sym_poisson}, there exists a unique solution $\mathbf{\Phi_b} \in \mathcal{X}$ defined in~\eqref{hilbert_X} such that $\Vert \mathbf{\Phi_b} \Vert_{H^2_x} \lesssim \Vert \p_\tau \mathbf{b}\Vert_{L^2_x}$ and $\mathbf{\Phi_b}$ satisfies~\eqref{sym_poisson_b_partial} a.e. We choose $\phi_j = \Phi_b^j$. Then we compute the contribution of the boundary integral. First we compute the specular part, i.e, $x\in \p\O_2$, we have
\begin{align*}
&  \int_{\p\O_2} \bigg[ \int_{n(x)\cdot v>0} + \int_{n(x)\cdot v<0}\bigg] f(x,v) (\mathbf{\Phi_b} \cdot v) \sqrt{\mu}(n(x)\cdot v) \dd v \dd S_x  \notag\\
& = 2\int_{\p\O_2} \int_{n(x)\cdot v>0} f |n(x)\cdot v|^2 \sqrt{\mu} \mathbf{\Phi_b} \cdot n \dd v \dd S_x = 0. 
%\label{I2_b_first}
\end{align*}
In the second line we applied the change of variable $v\to v-2(n(x)\cdot v)n(x)$ in \eqref{cov_spec}, and the contribution of the first part $v$ vanishes due to the specular boundary condition. In the last equality we applied the first boundary condition in \eqref{sym_poisson_b_partial}: $\mathbf{\Phi_b} \cdot n = 0$ on $\p\O$.

Then we compute the diffuse reflection part, i.e, $x\in \p\O_1$. We have
\begin{align*}
&  \int_{\p\O_1} \bigg[ \int_{n(x)\cdot v>0} + \int_{n(x)\cdot v<0}\bigg] f(x,v) (\mathbf{\Phi_b} \cdot v) \sqrt{\mu}(n(x)\cdot v) \dd v \dd S_x  \notag\\
& = \int_{\p\O_1} \int_{n(x)\cdot v > 0}  (\mathbf{\Phi_b} \cdot v) \sqrt{\mu} (n(x)\cdot v) (1-P_\gamma) f \dd v \dd S_x \notag\\
& \lesssim  o(1)\Vert \mathbf{\Phi_b} \Vert_{L^2(\p\O)}^2 + |\mathbf{1}_{x\in \p\O_1}(1-P_\gamma)f|_{2,+}^2.       
%\label{I2_b_second}
\end{align*}
In the second line, we applied the change of variable $v\to v-2(n(x)\cdot v)n(x)$ in \eqref{cov_diffuse}, and the contribution of the second part $-2(n(x)\cdot v)n(x)$ vanishes due to the boundary condition $\mathbf{\Phi_b}\cdot n=0$.

We conclude the estimate for $I_2$ as
\begin{align}
  |I_2|  &  \lesssim  o(1)\Vert \mathbf{\Phi_b} \Vert_{L^2(\p\O)}^2 + |\mathbf{1}_{x\in \p\O_1}(1-P_\gamma)f|_{2,+}^2.  \label{I2_b_bdd}
\end{align}

The symmetric Poisson system~\eqref{sym_poisson_b_partial} leads to
\begin{align*}
  & \frac{1}{2}\sum_{j=1}^3 \int_{\O}\Phi_b^j \p_\tau b_j \dd x  = - \int_{\O} \Div(\nablaS \mathbf{\Phi_b}) \cdot   \mathbf{\Phi_b} \dd x   \\
  & =  - \int_{\p\O} (\nablaS \mathbf{\Phi_b} n) \mathbf{\Phi_b} \dd S_x + \int_{\O} |\nablaS \mathbf{\Phi_b}|^2 \dd x \\
  & = -\int_{\p\O} (\nablaS \mathbf{\Phi_b} : n(x)\otimes n(x)) (\mathbf{\Phi_b}\cdot n)\dd S_x + \int_{\p\O} \frac{\beta(x)}{2-\beta(x)} |\mathbf{\Phi_b}|^2 \dd S_x+  \int_{\O} |\nablaS \mathbf{\Phi_b}|^2 \dd x\\
   & =\int_{\p\O} \frac{\beta(x)}{2-\beta(x)} |\mathbf{\Phi_b}|^2 \dd S_x+  \int_{\O} |\nablaS \mathbf{\Phi_b}|^2 \dd x.
\end{align*}
In the third line we applied the second boundary condition in~\eqref{sym_poisson_b_partial}. In the last line we applied the first boundary condition~\eqref{sym_poisson_b_partial}.

Combining~\eqref{delta_to_0}, \eqref{I4_b_bdd} and \eqref{I2_b_bdd}, with applying trace theorem to \eqref{I2_b_bdd}, we have
\begin{align*}
    &\Vert \mathbf{\Phi_b} \Vert_{H^1_x}^2 \lesssim \int_{\p\O} \frac{\beta(x)}{2-\beta(x)} |\mathbf{\Phi_b}|^2 \dd S_x+  \int_{\O} |\nablaS \mathbf{\Phi_b}|^2 \dd x = \frac{1}{2}\sum_{j=1}^3 \int_{\O}\Phi_b^j \p_\tau b_j \dd x \notag\\
    &  \lesssim \Vert \nabla \mathbf{\Phi_b}\Vert_{L^2_x}(\Vert a\Vert_{L^2_x} + \Vert c\Vert_{L^2_x} + \Vert \mu^{1/4}(\mathbf{I}-\mathbf{P})f\Vert_{L^2}) + o(1)\Vert \mathbf{\Phi_b}\Vert^2_{H^1_x} + o(1)\Vert b\Vert_{L^2_x}^2 + |\mathbf{1}_{x\in \p\O_1}(1-P_\gamma)f|^2_{2,+}.
\end{align*}
In the first inequality we applied the Korn's inequality~\eqref{korn} since $\beta(x)$ satisfies the condition in Lemma \ref{lemma:sym_poisson}.

Hence we obtain the estimate for $\mathbf{\Phi_b}$ as
\begin{equation}\label{test_b_partial_H1}
\Vert \mathbf{\Phi_b}\Vert_{H^1_x} \lesssim \Vert a\Vert_{L^2_x} + o(1)\Vert \mathbf{b}\Vert_{L^2_x} + \Vert c\Vert_{L^2_x} + \Vert \mu^{1/4}(\mathbf{I}-\mathbf{P})f\Vert_{L^2} +   |\mathbf{1}_{x\in \p\O_1}(1-P_\gamma)f|_{2,+}.
\end{equation}

Next we use the weak formulation in~\eqref{weak_formula_2} for the estimate of $\mathbf{b}$. 

Let $\mathbf{\phi_b}=(\phi_b^1,\phi_b^2,\phi_b^3)$ be the solution of the following system
\begin{align}
  -\Div(\nablaS \mathbf{\phi_b})  & = \frac{1}{2}\mathbf{b}(\tau) \text{ in }\O \notag \\
  \mathbf{\phi_b} \cdot n & = 0  \text{ on } \p\O\label{sym_poisson_b}\\
 (2-\beta(x))\big[ \nablaS \mathbf{\phi_b} n - (\nablaS \mathbf{\phi_b} : n\otimes n)n\big] + \beta(x)\mathbf{\phi_b} & = 0 \text{ on } \p\O .\notag
\end{align}
By Lemma \ref{lemma:sym_poisson}, there is a unique solution $\mathbf{\phi_b}$ satisfying the system~\eqref{sym_poisson_b} with
\begin{equation}\label{phi_b_H2}
\Vert \mathbf{\phi_b}\Vert_{H^2_x}\lesssim \Vert \mathbf{b}\Vert_{L^2_x}.
\end{equation}

We choose test function as
\begin{align}
  &\psi=\psi_b \notag\\
  &:= \sum_{i,j=1}^3  \p_j \phi^i_b v_i v_j \mu^{1/2} - \sum_{i=1}^3  \p_i \phi_b^i  \mu^{1/2} \label{test_b_1}\\
  & = \sum_{i,j=1}^3 \p_j \phi^i_b \mu^{1/2} [v_i v_j - \frac{\delta_{ij}}{3}|v|^2] + \sum_{i,j=1}^3 \p_j \phi^i_b \mu^{1/2} \frac{\delta_{ij}}{3}|v|^2 - \sum_{i=1}^3 \p_i \phi_b^i \mu^{1/2} \notag\\
  & = \sum_{i,j=1}^3 \p_j \phi_b^i  \hat{A}_{ij} + \sum_{i=1}^3 \p_i \phi^i_b \mu^{1/2} \Big[\frac{|v|^2-3}{3} \Big] \notag\\
  & = \sum_{i,j=1}^3 \p_j \phi_b^i  \hat{A}_{ij} + \sum_{i=1}^3 \p_i \phi_b^i  \chi_4 \frac{\sqrt{6}}{3}. \label{test_b_2}
\end{align}
Here $\chi_4$ and $\hat{A}_{ij}$ are defined in~\eqref{velocity_inner} and~\eqref{A_j} respectively.

We compute the transport operator $-v\cdot \nabla_x$ on $\psi_b$ using~\eqref{test_b_1}:
\begin{align}
    & -v\cdot \nabla_x \psi_b = -\sum_{i,j,k=1}^3 \p_{kj}\phi_b^i v_i v_j v_k \mu^{1/2} + \sum_{i,k=1}^3 v_k\p_{ki} \phi_b^i \mu^{1/2}  \notag \\
    & = -\sum_{i,j,k=1}^3 \p_{kj} \phi_b^i (\mathbf{I}-\mathbf{P})(v_i v_j v_k \mu^{1/2}) - \sum_{i,j,k=1}^3  \p_{kj} \phi_b^i  \mathbf{P}(v_i v_j v_k \mu^{1/2}) + \sum_{i,k=1}^3 v_k\p_{ki}\phi_b^i \mu^{1/2}. \label{test_b_derivative}
\end{align}
For $\mathbf{P}(v_iv_j v_k \mu^{1/2})$, when $i=j=k$, we have
\begin{align*}
   \mathbf{P}((v_i)^3 \mu^{1/2}) & = \langle (v_i)^3 \mu^{1/2}, \chi_i \rangle \chi_i = 3\chi_i,
\end{align*}
where we applied the computation in~\eqref{fact}.

When $i=j\neq k$, we apply~\eqref{fact} to have
\begin{align*}
   \mathbf{P}((v_i)^2 v_k \mu^{1/2}) & = \langle (v_i)^2 v_k \mu^{1/2}, \chi_k \rangle \chi_k = \chi_k.
\end{align*}
When $j=k\neq i$, we apply~\eqref{fact} to have
\begin{align*}
   \mathbf{P}((v_j)^2 v_i \mu^{1/2}) & = \langle (v_j)^2 v_i \mu^{1/2}, \chi_i \rangle \chi_i = \chi_i.
\end{align*}
When $i=k\neq j$, we apply~\eqref{fact} to have
\begin{align*}
   \mathbf{P}((v_i)^2 v_j \mu^{1/2}) & = \langle (v_i)^2 v_j \mu^{1/2}, \chi_j \rangle \chi_j = \chi_j.
\end{align*}
The above computation yields
\begin{align}
   &-\sum_{i,j,k=1}^3 \p_{kj}\phi^i_b \mathbf{P}(v_iv_jv_k \mu^{1/2}) \notag\\ 
   &=-3\sum_{ {\color{black}i=j=k}=1}^3 \p_{ii}\phi_b^i \chi_i-\sum_{j=k\neq i} \p_{jj}\phi^i_b \chi_i -\sum_{i=j\neq k} \p_{ki}\phi^i_b \chi_k -\sum_{i=k\neq j} \p_{ij}\phi^i_b \chi_j \notag\\
   &=-3\sum_{i=1}^3 \p_{ii}\phi_b^i \chi_i-\sum_{j\neq i} \p_{jj}\phi^i_b \chi_i -\sum_{i\neq k} \p_{ki}\phi^i_b \chi_k -\sum_{i\neq j} \p_{ij}\phi^i_b \chi_j .  \label{estimate_P}
\end{align}
Here we note that the last two terms are the same.

The last term in~\eqref{test_b_derivative} reads
\begin{align}
    &  \sum_{i=k}^3 \p_{ii} \phi_b^i  \chi_i + \sum_{i\neq k}^3 \p_{ki}\phi_b^i  \chi_k. \label{estimate_second}
\end{align}

Collecting~\eqref{estimate_P} and~\eqref{estimate_second}, the last two terms in~\eqref{test_b_derivative} combine to be
\begin{align*}
    &- \sum_{i,j,k=1}^3  \p_{kj} \phi_b^i  \mathbf{P}(v_i v_j v_k \mu^{1/2}) + \sum_{i,k=1}^3 v_k\p_{ki}\phi_b^i \mu^{1/2} \notag\\
    & = -2\sum_{i=1}^3 \p_{ii} \phi^i_b 
    \chi_i
    - \sum_{j\neq i} \p_{jj} \phi^i_b  \chi_i - \sum_{i\neq j}\p_{ij}\phi_b^i \chi_j \notag \\
    & = -\sum_{i=1}^3 \p_{ii} \phi^i_b \chi_i - \sum_{j\neq i} \p_{jj} \phi^i_b  \chi_i -\sum_{i=1}^3 \p_{ii} \phi^i_b  \chi_i- \sum_{{\color{black}j}\neq {\color{black}i}}\p_{{\color{black}ji}}\phi_b^{{\color{black}j}}\chi_{{\color{black}i}} \notag\\
    & =-\sum_{i=1}^3 \chi_i  \sum_{j=1}^3 \p_{jj}\phi_b^i    - \sum_{i=1}^3 \chi_i  \sum_{j=1}^3 \p_{ij}\phi_b^j \notag\\
    & = -\sum_{i=1}^3  \chi_i [\Delta \phi^i_b + \p_i \Div(\mathbf{\phi_b})] = \sum_{i=1}^3 \chi_i b_i.  
    %\label{transport_b}
\end{align*}
In the last line we used that $\phi_b^i$ is the solution to the system~\eqref{sym_poisson_b}, and $\Div(\nablaS \mathbf{\phi_b}) = \frac{1}{2} (\Delta \mathbf{\phi_b} + \nabla \Div \mathbf{\phi_b})$ from~\eqref{div_Delta}.

Then for~\eqref{test_b_derivative} we conclude that
\begin{align*}
   \eqref{test_b_derivative} & = \sum_{i=1}^3  \chi_i b_i -\sum_{i,j,k=1}^3 \p_{kj} \phi_b^i  (\mathbf{I}-\mathbf{P})(v_i v_j v_k \mu^{1/2}).
\end{align*}

Thus LHS of~\eqref{weak_formula_2} becomes
\begin{align}
   LHS & =    \int_s^t  \int_{\O} |\mathbf{b}|^2 \dd x \dd \tau -   \underbrace{\sum_{i,j,k=1}^3 \int_s^t  \int_\O \p_{kj}\phi_b^i \langle v_iv_jv_k \mu^{1/2}, (\mathbf{I}-\mathbf{P})f \rangle  \dd x \dd \tau}_{E_2}  , \label{LHS_b}
   \end{align}
where, by~\eqref{phi_b_H2}, for some $\delta_2\ll 1$, 
\begin{align*}
  |E_2|  & \lesssim    \delta_2 \int_s^t \Vert \mathbf{b}\Vert_{L^2_x}^2\dd \tau  + \frac{1}{\delta_2}  \int_s^t \Vert \mu^{1/4}(\mathbf{I}-\mathbf{P})f\Vert^2_{L^2} \dd \tau. 
\end{align*}

Next we estimate $J_i, 1\leq i\leq 4$ in~\eqref{weak_formula_2}. Note that $\mathbf{\Phi_b} = \p_\tau \mathbf{\phi_b}$, where we have an estimate of $\mathbf{\Phi_b}$ in~\eqref{test_b_partial_H1}. Applying the second representation of $\psi_b$ in~\eqref{test_b_2}, from~\eqref{A_j}, we have 
\begin{align}
 |J_1|   &  \lesssim  \int_s^t  \int_{\O} \big( |c(x)| + |\langle \hat{A}_{ij}, (\mathbf{I}-\mathbf{P})f\rangle|\big) |\nabla_x \mathbf{\Phi_b}| \dd x \dd \tau \notag\\
    &\lesssim \delta_2 \int_s^t \Vert \nabla \mathbf{\Phi_b}\Vert^2_{L^2_x} \dd \tau + \frac{1}{\delta_2} \int_s^t [\Vert c\Vert_{L^2_x}^2+\Vert \mu^{1/4}(\mathbf{I}-\mathbf{P})f\Vert^2_{L^2}] \dd \tau \notag\\
    &\lesssim \delta_2 \int_s^t \Vert \mathbf{b}\Vert_{L^2_x}^2 \dd \tau + \delta_2 \int_s^t \Vert a\Vert_{L^2_x}^2\dd \tau + \frac{1}{\delta_2} \int_s^t \Vert c\Vert_{L^2_x}^2\dd \tau + \frac{1}{\delta_2} \int_s^t \Vert \mu^{1/4}(\mathbf{I}-\mathbf{P})f \Vert^2_{L^2} \dd \tau. \label{J1_b_bdd}
\end{align}
In the last line we used~\eqref{test_b_partial_H1}.

Then we focus on the boundary integral $J_2$. We use the representation of $\psi_b$ in~\eqref{test_b_1} to have
\begin{align}
 \int_\gamma \psi_b f \dd \gamma   = &   \int_\gamma (n\cdot v)\Big(\sum_{i,j=1}^3  \p_j \phi^i_b v_i v_j \mu^{1/2} - \sum_{i=1}^3  \p_i \phi_b^i  \mu^{1/2}\Big)f\dd v \dd S_x. \label{j2_b_two_terms}
\end{align}
We begin by computing the second term. First we compute the specular part, i.e, $x\in \p\O_2$. We have
\begin{align}
    & - \int_{\p\O_2} \bigg[ \int_{n(x)\cdot v>0}    +     \int_{n(x)\cdot v<0}  \bigg] \Div (\mathbf{\phi_b}) \mu^{1/2} f(x,v) (n\cdot v) \dd v \dd S_x  = 0,  \label{j2_b_second_specular}
\end{align}
where we have applied the change of variable $v\to v-(2n(x)\cdot v)n(x)$ in \eqref{cov_spec}.

Then we compute the diffuse reflection part, i.e, $x\in \p\O_1$. By the change of variable in \eqref{cov_diffuse} we have
\begin{align}
    & \Big|- \int_{\p\O_1} \bigg[ \int_{n(x)\cdot v>0}    +     \int_{n(x)\cdot v<0}  \bigg] \Div (\mathbf{\phi_b}) \mu^{1/2} f(x,v) (n\cdot v) \dd v \dd S_x  \Big| \notag \\
    & =\Big|-\int_{\p\O_1} \int_{n(x)\cdot v>0} \Div(\mathbf{\phi_b}) \mu^{1/2} f(x,v) (n\cdot v) (1-P_\gamma) f \dd v \dd S_x \Big|\notag\\
    &\lesssim o(1)\Vert \nabla \mathbf{\phi_b} \Vert_{L^2_x}^2   +  |\mathbf{1}_{x\in \p\O_1}(1-P_\gamma)f|^2_{2,+} \lesssim o(1)\Vert \mathbf{b}\Vert_{L^2_x}^2 + |\mathbf{1}_{x\in \p\O_1}(1-P_\gamma)f|^2_{2,+}. \label{j2_b_second_diffuse}
\end{align}
In the last line we used~\eqref{phi_b_H2}.

Next we compute the first term in~\eqref{j2_b_two_terms}. We first compute the specular part, i.e, $x\in \p\O_2$. Through the change of variable $v\to v-2(n\cdot v)n$ in \eqref{cov_spec}, we have
\begin{align}
 & \int_{\p\O_2} \bigg[\int_{n(x)\cdot v>0} + \int_{n(x)\cdot v<0} \bigg]  (n\cdot v) \mu^{1/2} f(x,v)  \big[ \nablaS \mathbf{\phi_b} : (v\otimes v) \big]  \dd v \dd S_x   \notag\\
 &  =  4\int_{\p\O_2} \int_{n(x)\cdot v>0}   |n\cdot v|^2 \mu^{1/2} f(x,v) \big[\nablaS \mathbf{\phi_b} : (n\otimes v)  - \nablaS \mathbf{\phi_b} : (n\otimes n) (n\cdot v)\big]  \notag\\
 & = -4 \int_{\p\O_2} \int_{n(x)\cdot v>0} |n\cdot v|^2 \mu^{1/2} f(x,v) \frac{\beta(x)}{2-\beta(x)} (\mathbf{\phi_b} \cdot v) = 0. \label{j2_b_first_specular}
\end{align}    
In the second line, the contribution of $v\otimes v$ vanishes from the specular boundary condition of $f$, and we used that $\nablaS \mathbf{\phi_b}$ is symmetric so that $\nablaS \mathbf{\phi_b} : (n\otimes v) = \nablaS \mathbf{\phi_b} : (v\otimes n)$. In the last line, we have applied the second boundary condition in \eqref{sym_poisson_b}, and we used the fact that $\beta(x)=0$ when $x\in \p\O_2$.

Then we compute the diffuse part, i.e, $x\in \p\O_1$. We apply the change of variable \eqref{cov_diffuse} to have
\begin{align}
 & \int_{\p\O_1} \bigg[\int_{n(x)\cdot v>0} + \int_{n(x)\cdot v<0} \bigg]  (n\cdot v) \mu^{1/2} f(x,v)  \big[ \nablaS \mathbf{\phi_b} : (v\otimes v) \big]  \dd v \dd S_x   \notag\\
 & =   \int_{\p\O_1}  \int_{n(x)\cdot v>0} (n\cdot v) \mu^{1/2} \big[ \nablaS \mathbf{\phi_b} : (v\otimes v) \big]  (1-P_\gamma)f \notag\\
 &  +  4\int_{\p\O_1} \int_{n(x)\cdot v>0}   |n\cdot v|^2 \mu^{1/2} P_\gamma f \big[\nablaS \mathbf{\phi_b} : (n\otimes v)  - \nablaS \mathbf{\phi_b} : (n\otimes n) (n\cdot v)\big]  \notag\\
 & =  \int_{\p\O_1}  \int_{n(x)\cdot v>0} (n\cdot v) \mu^{1/2} \big[ \nablaS \mathbf{\phi_b} : (v\otimes v) \big]  (1-P_\gamma)f \notag \\
 &  -4 \int_{\p\O_1} \int_{n(x)\cdot v>0} |n\cdot v|^2 \mu^{1/2} P_\gamma f \frac{\beta(x)}{2-\beta(x)} \mathbf{\phi_b} \cdot v \notag \\
 & =  \int_{\p\O_1}  \int_{n(x)\cdot v>0} (n\cdot v) \mu^{1/2} \big[ \nablaS \mathbf{\phi_b} : (v\otimes v) \big]  (1-P_\gamma)f \notag \\
 &  -4 \int_{\p\O_1} \int_{n(x)\cdot v>0} |n\cdot v|^2 \mu^{1/2} P_\gamma f \frac{\beta(x)}{2-\beta(x)} \mathbf{\phi_b} \cdot (v - (n\cdot v)n + (n\cdot v)n) \notag \\
 & =  \int_{\p\O_1}  \int_{n(x)\cdot v>0} (n\cdot v) \mu^{1/2} \big[ \nablaS \mathbf{\phi_b} : (v\otimes v) \big]  (1-P_\gamma)f.\label{j2_b_first_diffuse}
\end{align}    
In the last line, the term $(n\cdot v)n$ vanishes due to the boundary condition of $\mathbf{\phi_b}\cdot n = 0$ in~\eqref{sym_poisson_b}. The remaining term $v-(n\cdot v)n$ vanishes since $|n\cdot v|^2 \mu \mathbf{\phi_b} \cdot (v-(n\cdot v)n)$ is odd in tangential direction of $n\cdot v$.

Collecting~\eqref{j2_b_first_diffuse}, \eqref{j2_b_first_specular}, \eqref{j2_b_second_diffuse} and \eqref{j2_b_second_specular} we obtain the estimate for $J_2$ as
\begin{align}
    |J_2| \lesssim o(1)\Vert \mathbf{b}\Vert_{L^2_x}^2 + |\mathbf{1}_{x\in \p\O_1}(1-P_\gamma)f|^2_{2,+}. \label{J2_b_bdd}
\end{align}
Here we used $\Vert \nablaS \mathbf{\phi_b}\Vert_{L^2(\p\O)} \lesssim \Vert  \mathbf{\phi_b} \Vert_{H^1(\p\O)} \lesssim \Vert \mathbf{\phi_b}\Vert_{H^2_x} \lesssim \Vert \mathbf{b}\Vert_{L^2_x}$ from \eqref{phi_b_H2} and trace theorem.

For $J_3$ and the contribution of $g$ in $J_4$, similar to~\eqref{J3_c_bdd} and~\eqref{J4_c_bdd}, we have
\begin{align}
  |J_3|  &  \lesssim \delta_2 \int_s^t \Vert \mathbf{b}\Vert_{L^2_x}^2\dd \tau + \frac{1}{\delta_2} \int_s^t \Vert (\mathbf{I}-\mathbf{P})f \Vert^2_{L^2} \dd \tau, \label{J3_b_bdd}
\end{align}
\begin{align}
  \Big|\int_s^t \int_{\O\times \mathbb{R}^3} g\psi \dd x \dd v \dd \tau \Big|  &  \lesssim \delta_2 \int_s^t \Vert \mathbf{b}\Vert_{L^2_x}^2\dd \tau +\frac{1}{\delta_2} \int_s^t \Vert g\Vert^2_{L^2} \dd \tau. \label{J4_b_bdd}
\end{align}

For the contribution of $\lambda f$ in $J_4$, with the choice of $\psi$ in \eqref{test_b_2}, we apply \eqref{sym_poisson_h2} to have
\begin{align}
  \Big|\lambda \int_s^t \int_{\O\times \mathbb{R}^3} f\psi \dd x \dd v \dd \tau \Big|    & \lesssim   \delta_2 \int_s^t \Vert \mathbf{\phi_b}\Vert_{H^1_x}^2 \dd \tau+ \frac{1}{\delta_2} \int_s^t [\Vert c\Vert_{L^2_x}^2 + \Vert \mu^{1/4} (\mathbf{I}-\mathbf{P})f\Vert_{L^2}^2] \dd \tau     \notag \\
  & \lesssim \delta_2 \int_s^t \Vert \mathbf{b}\Vert_{L^2_x}^2 \dd \tau + \frac{1}{\delta_2} \int_s^t [\Vert c\Vert_{L^2_x}^2 + \Vert \mu^{1/4} (\mathbf{I}-\mathbf{P})f\Vert_{L^2}^2] \dd \tau \label{J_4_b_bdd_2}. 
\end{align}

Collecting \eqref{LHS_b}, \eqref{J1_b_bdd}, \eqref{J2_b_bdd}, \eqref{J3_b_bdd},  \eqref{J4_b_bdd} and \eqref{J_4_b_bdd_2}, we conclude the estimate for $\mathbf{b}$ as follows: for some $C_2>0$,
\begin{align}
  \int_s^t \Vert \mathbf{b}\Vert_{L^2_x}^2 \dd \tau  & \leq C_2 \Big[ G_b(t) - G_b(s) + \delta_2 \int_s^t \Vert a\Vert_{L^2_x}^2 \dd \tau + \frac{1}{\delta_2} \int_s^t \Vert c\Vert_{L^2_x}^2 \dd \tau \notag\\
  & +  \frac{1}{\delta_2} \int_s^t \Vert (\mathbf{I}-\mathbf{P})f\Vert_{L^2}^2 \dd \tau + \frac{1}{\delta_2}\int_s^t \Vert g\Vert_{L^2}^2 \dd \tau + \frac{1}{\delta_2}\int_s^t|\mathbf{1}_{x\in \p\O_1}(1-P_\gamma)f|^2_{2,+} \dd \tau \Big]. \label{b_estimate}
\end{align}

\textbf{Step 3: estimate of $a(x)$.} 

First we choose test function as $\psi = \phi(x)\chi_0$. Direct computation yields
\begin{equation}\label{transport_a_t}
v\cdot \nabla \psi =  \sum_{i=1}^3 \p_i \phi \chi_i.
\end{equation}

LHS \eqref{weak_formula} becomes
\begin{align*}
   LHS &  =  \int_{\O} \{a(t)-a(t-\triangle) \} \phi \dd x.
\end{align*}

By \eqref{transport_a_t}, we have
\begin{equation}\label{I1_a}
I_1 = \int^t_{t-\triangle} \int_{\O} (\mathbf{b}\cdot \nabla \phi) \dd x \dd \tau,
\end{equation}
while $I_3=0$ and the contribution of $g$ in $I_4$ vanishes from the property $\mathcal{L}$ and \eqref{assumption_g}.

For the contribution of $\lambda f$ in $I_4$, we have
\begin{align}
    & \lambda \Big| \int_{\O\times \mathbb{R}^3} f \phi(x) \chi_0 \dd x \dd v \Big|  = \lambda \Big| \int_{\O\times \mathbb{R}^3}  \phi(x) a(x) \dd x \dd v \Big| \leq \lambda \Vert \phi\Vert_{L^2_x}^2 + \lambda \Vert a\Vert_{L^2_x}^2 . \label{I4_a_bdd}
\end{align}

For fixed $t$ we choose $\phi = \Phi_a$ such that 
\begin{equation}\label{test_a_t}
\begin{split}
    -\Delta \Phi_a  = \p_\tau a(t) \text{ in }\O,  \ \ \nabla\Phi_a \cdot n = 0 \text{ on }\p\O, \ \  \int_{\O} \Phi_a \dd x = 0 .
\end{split}
\end{equation}

Then the contribution of the boundary integral in \eqref{weak_formula} becomes
\begin{align}
    &I_2= \int_{\gamma} \Phi_a(x)\chi_0 f = \int_{\p\O} \int_{\gamma_+} \Phi_a(x) \chi_0 f + \int_{\p\O} \int_{\gamma_-} \Phi_a(x)\chi_0 f   \notag \\
    & = \int_{\p \O_1}\int_{\gamma_+} \Phi_a(x)\chi_0 f - \int_{\p\O_1} \int_{\gamma_+} \Phi_a(x)\chi_0 f. \label{I2_a}
\end{align}
Here we applied the change of variable $v\to v-2(n(x)\cdot v)n(x)$ in \eqref{cov_spec} to eliminate the contribution of $\p\O_2$ from the specular boundary condition. Then use the change of variable in \eqref{cov_diffuse} to obtain
\begin{equation}\label{I2_a_bdd}
\eqref{I2_a} \lesssim o(1)\Vert \Phi_a \Vert_{L^2(\p\O)}^2 + |\mathbf{1}_{x\in \p\O_1}(1-P_\gamma)f|^2_{2,+}.
\end{equation}

By \eqref{I1_a}, \eqref{I4_a_bdd} and \eqref{I2_a_bdd}, we apply the boundary condition in \eqref{test_a_t} and trace theorem to have
\begin{align*}
    &\int_{\O} |\nabla \Phi_a|^2 \dd x =  -\int_{\O} \Delta \Phi_a \Phi_a \dd x = \int_{\O} \Phi_a \p_\tau a \dd x   \\
    & \lesssim  \Vert \nabla \Phi_a\Vert_{L^2_x} \Vert \mathbf{b}\Vert_{L^2_x} + o(1)\Vert \Phi_a \Vert_{H^1_x}^2 + o(1)\Vert a\Vert_{L^2_x}^2 + |\mathbf{1}_{x\in \p\O_1}(1-P_\gamma) f|^2_{2,+}.
\end{align*}
Combining with the standard Poincare inequality, we conclude that
\begin{align}
  \Vert \Phi_a\Vert_{H^1_x}  &  \lesssim  o(1)\Vert a\Vert_{L^2_x} + \Vert \mathbf{b}\Vert_{L^2_x} + |\mathbf{1}_{x\in \p\O_1}(1-P_\gamma) f|_{2,+} . \label{Phi_a_bdd}
\end{align}

Next we use the weak formulation in \eqref{weak_formula_2}. Let $\phi_a$ be a solution of the following problem
\begin{equation}\label{phi_a}
-\Delta \phi_a = a(t) \text{ in }\O, \ \nabla\phi_a \cdot n = 0 \text{ on } \p\O.
\end{equation}

We choose the test function as
\begin{equation}\label{test_a}
\psi=\psi_a := \sum_{i=1}^3  \p_i \phi_a  v_i (|v|^2 - 10) \mu^{1/2} = \sum_{i=1}^3 \p_i \phi_a (\sqrt{10} \hat{B}_i - 5\chi_i). 
\end{equation}
Then direct computation yields
\begin{align*}
  -v\cdot \nabla_x \psi  & =-5 \Delta \phi_a  \chi_0 - \sum_{i,j=1}^3 \p^2_{ij} \phi_a (\mathbf{I}-\mathbf{P})(v_iv_j(|v|^2-10)\mu^{1/2}). 
%\label{transport_a}
\end{align*}
Thus LHS of~\eqref{weak_formula_2} becomes 
\begin{align}
  LHS  &  = 5 \int_s^t \Vert a\Vert_{L^2_x}^2\dd \tau - \sum_{i,j=1}^3 \int_s^t \int_{\O} \p_{ij}^2 \phi_a \langle v_iv_j (|v|^2-10)\mu^{1/2},(\mathbf{I}-\mathbf{P})f\rangle \dd x \dd \tau \notag\\
  & = 5 \int_s^t \Vert a\Vert_{L^2_x}^2\dd \tau + E_3, \label{LHS_a}
\end{align}
where, for any $\delta_3>0$,
\begin{align*}
  |E_3|  & \lesssim \delta_3 \int_s^t \Vert a\Vert_{L^2_x}^2\dd \tau  +  \frac{1}{\delta_3} \int_s^t \Vert \mu^{1/4}(\mathbf{I}-\P)f\Vert^2_{L^2} \dd \tau.
\end{align*}
For $J_1$ in~\eqref{weak_formula_2}, from $\Phi_a = \p_\tau \phi_a$ and~\eqref{Phi_a_bdd}, we have
\begin{align}
  |J_1|  &  \lesssim  \int_s^t \Vert \Phi_a \Vert^2_{H^1_x} \dd \tau +  \int_s^t \Vert \mathbf{b}\Vert_{L^2_x}^2 \dd \tau +  \int_s^t \Vert \mu^{1/4}(\mathbf{I}-\P)f\Vert^2_{L^2}  \dd \tau \notag\\
  & \lesssim  \int_s^t \Vert \mathbf{b}\Vert_{L^2_x}^2\dd \tau+ o(1)\int_{s}^t \Vert a\Vert_{L^2_x}^2 \dd \tau  +  \int_s^t \Vert \mu^{1/4}(\mathbf{I}-\P)f\Vert^2_{L^2}  \dd \tau + |\mathbf{1}_{x\in \p\O_1}(1-P_\gamma) f|^2_{2,+}. \label{J1_a_bdd}
\end{align}

Then we apply the boundary condition of $\phi_a$ and $f$ to compute $J_2$:
\begin{align*}
    &  \int_{\gamma} \psi f \dd \gamma = \int_{\gamma_+} \psi f \dd \gamma + \int_{\gamma_-} \psi f \dd \gamma.
\end{align*}

First we compute the specular part, i.e, $x\in \p\O_2$. Applying the change of variable $v\to v-2(n(x)\cdot v)n(x)$ in \eqref{cov_spec}, we have
\begin{align*}
    & \int_{\p\O_2} \Big[\int_{n(x)\cdot v>0} + \int_{n(x)\cdot v<0} \Big] (|v|^2-10) \mu^{1/2} (v\cdot \nabla_x \phi_a) (n\cdot v) f \dd v \dd S_x  \\
    & = 2\int_{\p\O_2}  \int_{n(x)\cdot v>0} (|v|^2 -10)\mu^{1/2} (n\cdot \nabla_x \phi_a) (n\cdot v)^2 f \dd v \dd S_x = 0.
\end{align*}
Here we have applied the specular boundary condition of $f$ to eliminate the contribution of $v$ in $v-2(n(x)\cdot v)n(x)$. In the last equality we used the boundary condition of $\phi_a$ in \eqref{phi_a}.

Then we compute the diffuse part, i.e, $x\in \p\O_1$. We have
\begin{align*}
    & \int_{\p\O_1} \Big[\int_{n(x)\cdot v>0} + \int_{n(x)\cdot v<0} \Big] (|v|^2-10) \mu^{1/2} (v\cdot \nabla_x \phi_a) (n\cdot v) f \dd v \dd S_x \notag \\
    &  = \int_{\p\O_1} \int_{n(x)\cdot v>0}   (|v|^2 - 10) \mu^{1/2} (v\cdot \nabla_x \phi_a) (n\cdot v) (f-P_\gamma f) \dd v \dd S_x  \notag\\
    & + 2\int_{\p\O_1} \int_{n(x)\cdot v>0} (|v|^2 - 10)\mu^{1/2} (n\cdot \nabla_x \phi_a) (n\cdot v)^2 P_\gamma f \dd v\dd S_x  \notag\\
    & = \int_{\p\O_1} \int_{n(x)\cdot v>0}   (|v|^2 - 10) \mu^{1/2} (v\cdot \nabla_x \phi_a) (n\cdot v) (f-P_\gamma f) \dd v \dd S_x  \notag\\
    & \lesssim \delta_3 \Vert \nabla \phi_a\Vert_{L^2(\p\O)}^2 + \frac{1}{\delta_3} |\mathbf{1}_{x\in \p\O_1} (1-P_\gamma)f|^2_{2,+}\lesssim \delta_3 \Vert a\Vert_{L^2_x}^2 + \frac{1}{\delta_3} |\mathbf{1}_{x\in \p\O_1} (1-P_\gamma)f|^2_{2,+}. 
\end{align*}
In the first equality we used the change of variable $v\to v-2(n(x)\cdot v)n(x)$ in \eqref{cov_diffuse}. In the second equality, the third line vanishes due to the boundary condition of $\phi_a$ in \eqref{phi_a}. In the last inequality, we used the standard elliptic estimate of \eqref{phi_a} with trace theorem: $\Vert \phi_a\Vert_{H^1(\p\O)}\lesssim \Vert \phi_a\Vert_{H^2_x}\lesssim \Vert a\Vert_{L^2_x}$.

We derive the estimate for $J_2$ as
\begin{equation}\label{J2_a_bdd}
|J_2| \lesssim \delta_3 \int_s^t \Vert a\Vert_{L^2_x}^2 \dd \tau + \frac{1}{\delta_3} \int_s^t |\mathbf{1}_{x\in \p\O_1} (1-P_\gamma )f|_{2,+}^2 \dd \tau.
\end{equation}

$J_3$ and the contribution of $g$ in $J_4$ are estimated similarly as \eqref{J3_c_bdd} and \eqref{J4_c_bdd}:
\begin{equation}\label{J3_a_bdd}
|J_3| \lesssim \delta_3 \int_s^t \Vert a\Vert_{L^2_x}^2 \dd \tau + \frac{1}{\delta_3} \int_s^t \Vert (\mathbf{I}-\mathbf{P})f \Vert^2_{L^2} \dd \tau,
\end{equation}
\begin{equation}\label{J4_a_bdd}
\Big|\int_s^t \int_{\O\times \mathbb{R}^3} g\psi\dd x \dd v \dd \tau\Big| \lesssim \delta_3 \int_s^t \Vert a\Vert_{L^2_x}^2 \dd \tau + \frac{1}{\delta_3} \int_s^t \Vert g\Vert_{L^2}^2 \dd \tau.
\end{equation}

For the contribution of $\lambda f$ in $J_4$, with the choice of $\psi$ in \eqref{test_a}, from the standard regularity estimate and $\lambda \ll 1$, we have
\begin{align}
    & \lambda \Big|\int_s^t \int_{\O\times \mathbb{R}^3} f\psi\dd x \dd v \dd \tau\Big|   \lesssim    \lambda \int_s^t \Vert \phi_a \Vert_{H^1_x}^2 \dd \tau   + \int_s^t \Vert \mathbf{b}\Vert_{L^2_x}^2 \dd \tau + \int_s^t \Vert \mu^{1/4} (\mathbf{I}-\mathbf{P})f \Vert_{L^2}^2 \dd \tau      \notag  \\
    & \lesssim o(1)\int_s^t  \Vert a \Vert_{L^2_x}^2 \dd \tau + \int_s^t \Vert \mathbf{b}\Vert_{L^2_x}^2 \dd \tau + \int_s^t \Vert \mu^{1/4} (\mathbf{I}-\mathbf{P})f \Vert_{L^2}^2 \dd \tau   . \label{J4_a_bdd_2} 
\end{align}

Collecting \eqref{LHS_a}, \eqref{J1_a_bdd}, \eqref{J2_a_bdd}, \eqref{J3_a_bdd}, \eqref{J4_a_bdd} and \eqref{J4_a_bdd_2}, we conclude the estimate $a$ as follows: for some $C_3>0$,
\begin{align}
  \int_s^t \Vert a\Vert_{L^2_x}^2 \dd \tau  & \leq C_3 \Big[ G_a(t)-G_a(s) + \int_s^t \Vert \mathbf{b}\Vert_{L^2_x}^2 \dd \tau  + \frac{1}{\delta_3} \int_s^t \Vert (\mathbf{I}-\mathbf{P})f\Vert_{L^2}^2 \dd \tau \notag\\
  & +  \frac{1}{\delta_3} \int_s^t \Vert g\Vert_{L^2}^2 \dd \tau + \frac{1}{\delta_3}\int_s^t |\mathbf{1}_{x\in \p\O_1} (1-P_\gamma )f|_{2,+}^2 \dd \tau\Big]. \label{a_bdd}
\end{align}

\textbf{Step 4: conclusion}  

We summarize \eqref{a_bdd}, \eqref{b_estimate} and \eqref{c_bdd}. We let $\delta_2=\sqrt{\delta_1}$, and multiply \eqref{b_estimate} by $\delta_1^{3/4}$ to have
\begin{align}
 \delta_1^{3/4} \int_s^t \Vert \mathbf{b}\Vert_{L^2_x}^2 \dd \tau  & \leq  C_2 \delta_1^{5/4} \int_s^t \Vert a\Vert_{L^2_x}^2 \dd \tau + C_2 \delta_1^{1/4} \int_s^t \Vert c\Vert^2_{L^2_x} \dd \tau   + C_2 \delta_1^{3/4} \Big[ G_b(t) - G_b(s)  \notag\\
  & +  \frac{1}{\sqrt{\delta_1}} \int_s^t \Vert (\mathbf{I}-\mathbf{P})f\Vert_{L^2}^2 \dd \tau + \frac{1}{\sqrt{\delta_1}}\int_s^t \Vert g\Vert_{L^2}^2 \dd \tau + \frac{1}{\sqrt{\delta_1}} \int_s^t |\mathbf{1}_{x\in \p\O_1}(1-P_\gamma)f|^2_{2,+} \dd \tau  \Big]. \label{b_estimate_delta}
\end{align}

Then we evaluate $\delta_1 \times \eqref{a_bdd} + \eqref{b_estimate_delta} + \eqref{c_bdd}   $ as
\begin{align*}
    &  \delta_1 \int_s^t \Vert a\Vert^2_{L^2_x} \dd \tau  +  \delta_1^{3/4} \int_s^t \Vert \mathbf{b}\Vert_{L^2_x}^2 \dd \tau +  \int_s^t \Vert c\Vert_{L^2_x}^2 \dd \tau \\
    & \leq   (C_3 \delta_1 + C_1 \delta_1) \int_s^t \Vert \mathbf{b}\Vert_{L^2_x}^2 \dd \tau + C_2 \delta_1^{5/4} \int_s^t \Vert a\Vert_{L^2_x}^2 \dd \tau + C_2 \delta_1^{1/4} \int_s^t \Vert c\Vert_{L^2_x}^2 \dd \tau \\
    & + C \Big[G_a(t)+G_b(t)+G_c(t) - G_a(s)-G_b(s)-G_c(s)  \Big] \\
    & + C \Big[\int_s^t \Vert g\Vert_{L^2}^2 \dd \tau + \int_s^t \Vert g\Vert_{L^2}^2 \dd \tau +  \int_s^t |\mathbf{1}_{x\in \p\O_1}(1-P_\gamma)f|^2_{2,+} \dd \tau  \Big].
\end{align*}
Here the constant $C$ in the last two lines depends on $C_1,C_2,C_3,\delta_1$. We choose small enough $\delta_1$ such that 
\begin{align*}
    &C_3\delta_1+C_1\delta_1 < \delta_1^{3/4}, \ \ C_2 \delta_1^{5/4} < \delta_1, \ \ C_2 \delta_1^{1/4}<1.
\end{align*}
Finally, we conclude the lemma with $|G(t)| = |G_a(t) + G_b(t) + G_c(t)| = |\int_{\O\times \mathbb{R}^3} (\psi_a + \psi_b +\psi_c) f(t) \dd x \dd v |  \lesssim \Vert f\Vert_{L^2}^2$ and $\mathbf{1}_{x\in \p\O_1} = \alpha(x)$.
\end{proof}

\begin{proof}[\textbf{Proof of Proposition \ref{prop:l2}}]
The well-posedness of~\eqref{intial_value_problem} is standard by constructing an approximating sequence. We refer the detailed proofs to Proposition 6.1 in~\cite{EGKM}.

In what follows we only prove the decay estimate~\eqref{l2_decay}. Multiplying~\eqref{intial_value_problem} with $e^{\lambda t}$ we get
\begin{equation}\label{f_lambda_t}
[\p_t  + v\cdot \nabla_x + \mathcal{L}](e^{\lambda t}f) =    \lambda e^{\lambda t}f + e^{\lambda t}g. 
\end{equation}
Applying Green's identity to~\eqref{f_lambda_t}, for some $\delta \ll 1$, we have
\begin{align}
&  \Vert e^{\lambda t}f(t)\Vert_{L^2}^2 + \int_0^t \Vert (\mathbf{I}-\mathbf{P})e^{\lambda s}f(s)\Vert_\nu^2  \dd s+ \int_0^t  |\mathbf{1}_{x\in \p\O_1}(1-P_\gamma)e^{\lambda s}f(s)|_{2,+}^2    \dd s \notag\\
& \lesssim  (\lambda+\delta) \int_0^t \Vert e^{\lambda s} f(s)\Vert_{L^2}^2 \dd s + \Vert f(0)\Vert_{L^2}^2 + C_\delta\int_0^t  \Vert e^{\lambda s} g(s)\Vert_{L^2}^2 \dd s. \label{green_lambda}
\end{align}
Furthermore, since $\mathbf{P}(e^{\lambda t}g)=0$, we can apply Lemma \ref{lemma:L2} to~\eqref{f_lambda_t} with $\lambda \ll 1$, and then we have
\begin{align}
  \int_0^t \Vert e^{\lambda s}\mathbf{P}f(s)\Vert_{L^2}^2  &  \lesssim G(t)-G(0) + \int_0^t \Vert e^{\lambda s}(\mathbf{I}-\mathbf{P})f(s)\Vert_{L^2}^2  \notag \\
  & + \int_0^t \Vert e^{\lambda s} g(s)\Vert_{L^2}^2 \dd s + \int_0^t |e^{\lambda s}\mathbf{1}_{x\in \p\O_1}    (1-P_\gamma) f(s)|_{2,+}^2   \dd s,   \label{pf_lambda}
\end{align}
where $|G(t)|\lesssim \Vert e^{\lambda t} f(t)\Vert_{L^2}^2$. Therefore, multiplying \eqref{pf_lambda} by a small constant $\e$ and adding the resultant to~\eqref{green_lambda}, we obtain
\begin{align*}
 & (1-C\e) \Vert e^{\lambda t}f(t)\Vert_{L^2}^2  +  \Big\{(1-C\e)\int_0^t \Vert e^{\lambda s}(\mathbf{I}-\mathbf{P})f(s)\Vert_\nu^2 \dd s + \e\int_0^t\Vert e^{\lambda s}\mathbf{P}f(s)\Vert_{L^2}^2  \dd s\Big\} \\
 &+ (1-C\e) \int_0^t |e^{\lambda s}\mathbf{1}_{x\in \p\O_1}    (1-P_\gamma) f(s)|_{2,+}^2   \dd s \\
 & \leq  C(\lambda+\delta) \int_0^t \Vert e^{\lambda s} f(s)\Vert_{L^2}^2 \dd s   + C\Vert f(0)\Vert_{L^2}^2   + C_\delta \int_0^t \Vert e^{\lambda s} g(s)\Vert_{L^2}^2 \dd s  .
\end{align*}
Since $\Vert e^{\lambda s}(\mathbf{I}-\mathbf{P})f(s)\Vert_{L^2}^2 + \Vert e^{\lambda s}\mathbf{P}f(s)\Vert_{L^2}^2 \lesssim \Vert f(s)\Vert_{L^2}^2$, we further obtain with $\e \ll 1$:
\begin{align*}
 &  \Vert e^{\lambda t}f(t)\Vert_{L^2}^2  +  \e \int_0^t \Vert e^{\lambda s}f(s)\Vert_{L^2}^2 \dd s \\
 & \leq  C(\lambda+\delta) \int_0^t \Vert e^{\lambda s} f(s)\Vert_{L^2}^2 \dd s   + C\Vert f(0)\Vert_{L^2}^2   + C_\delta \int_0^t \Vert e^{\lambda s} g(s)\Vert_{L^2}^2 \dd s  .
\end{align*}
Last we let $\delta+\lambda \ll 1$ be such that $C(\lambda+\delta) \leq \e$, then the above estimate gives the desired decay estimate \eqref{l2_decay}. We conclude the proof of Proposition \ref{prop:l2}.
\end{proof}

\ \\

\section{$L^\infty$ estimate}\label{sec:linfty}
In this section we are devoted to the proof of Theorem \ref{thm:linfty}. 

We start with the $L^\infty$ estimate of the linear problem~\eqref{linear_f} in the following proposition.

\begin{proposition}\label{prop:linfty}
There is $C>0$ such that the solution in Proposition \ref{prop:l2} satisfies
\begin{align*}
 \Vert wf(t)\Vert_\infty + |wf(t)|_\infty   &   \leq  Ce^{-\lambda t} \Big\{\Vert wf_0\Vert_\infty + \sup_{0\leq s\leq t} e^{\lambda s}\Big\Vert \frac{w}{\langle v\rangle} g(s) \Big\Vert_\infty \Big\},
 %\label{linfty}
\end{align*}
for any $t\geq 0$.
\end{proposition}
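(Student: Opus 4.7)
The plan is to upgrade the $L^2$ decay from Proposition \ref{prop:l2} to an $L^\infty$ bound by iterating Duhamel's formula along backward characteristics, then exploiting the specific mixed-boundary geometry to carry out the $L^2$--$L^\infty$ change of variables through the specular bounces.

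First I would work with $\tilde f := e^{\lambda t}f$, which satisfies $\partial_t \tilde f + v\cdot\nabla_x \tilde f + \mathcal L \tilde f = \lambda \tilde f + e^{\lambda t}g$ under the same boundary conditions. Using the splitting $\mathcal L f = \nu(v)f - Kf$ from \eqref{Lsplit} and setting $h:=w\tilde f$, the mild formulation along the backward trajectory reads
\[ h(t,x,v) = e^{-\nu(v)(t-t_1)}h(t_1,\cdot,\cdot) + \int_{t_1}^t e^{-\nu(v)(t-s)}\bigl[wK(h/w) + \lambda h + w e^{\lambda s}g\bigr]\,\dd s, \]
where $t_1$ denotes the first backward time the trajectory hits the diffuse portion $\partial\Omega_1$, while specular reflections at $\partial\Omega_2$ are treated as internal straight-line motion that merely flips $v_1$. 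I would then iterate this representation through the stochastic diffuse cycles of Definitions \ref{def:sto_cycle_spec} and \ref{def:sto_cycle_diffuse}, and invoke the Guo-type estimate in Lemma \ref{lemma:tk} to truncate to finitely many cycles with exponentially small remainder in $\|wf_0\|_\infty$.

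Second, I would iterate $K$ once more so that each remaining term contains a double kernel $\mathbf{k}(v,v')\mathbf{k}(v',v'')$ in the velocity variables. Lemma \ref{lemma:k_theta} shows that the contributions from $|v'|>N$, $|v''|>N$, $|v-v'|<1/N$ or $|v'-v''|<1/N$ are $o(1)\|h\|_\infty$ and can be absorbed to the left-hand side together with the $\lambda h$ term, provided $\lambda\ll 1$. On the truncated region $|v|,|v'|,|v''|\leq N$ with $|v'-v''|\geq 1/N$, the kernel is uniformly bounded by \eqref{k_N_upper_bdd}, and I would convert the innermost velocity integral into a spatial integral. This is where the specific boundary \eqref{bdr_portion} is crucial: for bounded velocity $|v|\leq N$, the number of specular bounces at $x_1=\pm L$ in a time window of length $T$ is at most $O(NT/L)+1$, so between two consecutive diffuse reflections the trajectory unfolds isometrically to a straight line segment in the extended strip $x_1\in(-ML,ML)$ with $M=O(NT)$. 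After unfolding, the map $v''\mapsto X(s)-(s-s')v''$ has Jacobian $|s-s'|^3$, which is nondegenerate away from $s=s'$, and a Cauchy--Schwarz step bounds the resulting term by $\|\tilde f\|_{L^2([0,t]\times\Omega\times\R^3)}$, which Proposition \ref{prop:l2} controls by $\|wf_0\|_\infty + \sup_s e^{\lambda s}\|wg(s)/\langle v\rangle\|_\infty$.

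The boundary quantity $|wf(t)|_\infty$ is then recovered by tracing one free-streaming step inward from a boundary point and applying the interior bound, since the diffuse operator $P_\gamma$ acts as an $L^\infty$ contraction. The main obstacle is the unfolding step in the previous paragraph: one has to verify that the extended trajectory is genuinely one-to-one, that the number $M$ of copies remains uniformly bounded across the $k$-fold stochastic cycle (which forces a careful choice of cycle truncation depending on $N$), and that the probability measure on $\partial\Omega_1$ coming from successive diffuse reflections is correctly reproduced after unfolding. The simplicity of having specular reflection only on two parallel flat plates — rather than on a curved piece of the boundary — is precisely what makes this bookkeeping tractable and bypasses the triple-iteration technique required in \cite{KL}.
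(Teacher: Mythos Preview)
Your proposal is correct and follows the paper's approach closely: the Duhamel iteration through the stochastic cycles, the double $K$-iteration, the velocity truncation via Lemma~\ref{lemma:k_theta}, and---crucially---the unfolding of the specular bounces into an extended strip $x_1\in(-ML,ML)$ so that the change of variables $v'\mapsto X_0(s)-(s-s')v'$ has Jacobian $(s-s')^3$ are exactly what the paper does. The one step you leave implicit is the final time-slicing: the paper first proves the estimate on a window $[0,T_0]$ with $T_0$ large but \emph{fixed} (so that $k=C_1T_0^{5/4}$ and the constants $C(T_0)$ from Lemma~\ref{lemma:tk} and the unfolding are fixed), and then iterates over shifts $t=mT_0+s$ via the semigroup property to propagate the decay to all $t\ge 0$; without this device the cycle depth $k$ and the unfolding count $M$ would have to grow with $t$, and the absorption you describe would not close.
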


We use standard notations for the backward exit time and backward exit position:
\begin{equation*}%\label{xb_tb}
\begin{split}
\tb(x,v) :   &  = \sup\{s\geq 0, x-sv \in \O\} , \\
  \xb(x,v)  : & = x - \tb(x,v)v.
\end{split}
\end{equation*}

Recall the specular portion and diffuse portion defined in~\eqref{bdr_portion}. We denote $t_0=T_0$, a fixed starting time. First we define the stochastic cycle with respect to the specular reflection:
\begin{definition}\label{def:sto_cycle_spec}
When $\xb(x,v) \in \p \O_2$, the specular portion, we define
\begin{equation}\label{spec_cycle_1}
\begin{split}
    \bar{x}_{0,1}   =   \xb(x,v), \ \    \bar{t}_{0,1} & = t_0 - \tb(x,v), \ \ \bar{v}_{0,1} = R_{\bar{x}_{0,1}}v. 
\end{split}
\end{equation}
Otherwise, the backward exit position belongs to $\p \O_1$, and \eqref{spec_cycle_1} is not defined. 

Inductively, if $\bar{x}_{0,j}$, $\bar{t}_{0,j}$ and $\bar{v}_{0,j}$ are defined, and $\xb(\bar{x}_{0,j}, \bar{v}_{0,j}) \in \p\O_2$, we define the specular cycle as
\begin{equation}\label{spec_cycle_j}
    \bar{x}_{0,j+1} = \xb(\bar{x}_{0,j}, \bar{v}_{0,j}),\ \  \bar{t}_{0,j+1} = \bar{t}_{0,j} - \tb(\bar{x}_{0,j}, \bar{v}_{0,j}), \ \ \bar{v}_{0,j+1} = R_{\bar{x}_{0,j}}\bar{v}_{0,j}.
\end{equation}

Then we define the maximum index that corresponds to the iteration of the specular reflection:
\begin{equation}\label{maximum_spe}
M_0 = \sup\{j\geq 0:\bar{x}_{0,j} \text{ is defined}\}.
\end{equation}
Here we note that in the case of $v_2=0, v_3=0$, we have $M_0=\infty$; in the case of $M_0=0$, we have $\xb(x,v)\in \p\O_1$.

Conventionally, we define
\begin{equation}\label{spec_cycle_0}
\bar{x}_{0,0} = x, \ \ \bar{t}_{0,0} = t_0, \ \ \bar{v}_{0,0} = v.
\end{equation}

\end{definition}

Next we define the stochastic cycle with respect to the diffuse reflection:
\begin{definition}\label{def:sto_cycle_diffuse}
Continuing from Definition \ref{def:sto_cycle_spec}, when $M_0 < \infty$, we define
\begin{equation*}%\label{diffuse_cycle_1}
x_1  = \xb(\bar{x}_{0,M_0}, \bar{v}_{0,M_0}), \ \ t_1 = \bar{t}_{0,M_0} - \tb(\bar{x}_{0,M_0}, \bar{v}_{0,M_0}), \ \ v_1 \in \mathcal{V}_1 = \{v_1\in \mathbb{R}^3: n(x_1)\cdot v_1>0\}.
\end{equation*}
The definition of $M_0$ in~\eqref{maximum_spe} implies that $x_1\in \p\O_1$, the diffuse portion.

Similar to~\eqref{spec_cycle_1}, when $\xb(x_1,v_1) \in \p\O_2$, we define
\begin{equation*}%\label{spec_cycle_1_1}
\bar{x}_{1,1} = \xb(x_1,v_1), \ \ \bar{t}_{1,1} = t_1 - \tb(x_1,v_1),  \ \ \bar{v}_{1,1} = R_{\bar{x}_{1,1}} v_1.
\end{equation*}

Inductively, if $\bar{x}_{i,j}$, $\bar{t}_{i,j}$ and $\bar{v}_{i,j}$ are defined, and $\xb(\bar{x}_{i,j},\bar{v}_{i,j})\in \p\O_2$, similar to the specular cycle in~\eqref{spec_cycle_j}, we further define
\begin{equation*}%\label{spec_cycle_i_j}
    \bar{x}_{i,j+1} = \xb(\bar{x}_{i,j}, \bar{v}_{i,j}),\ \  \bar{t}_{i,j+1} = \bar{t}_{i,j} - \tb(\bar{x}_{i,j}, \bar{v}_{i,j}), \ \ \bar{v}_{i,j+1} = R_{\bar{x}_{i,j}}\bar{v}_{i,j},
\end{equation*}
and
\begin{equation*}%\label{M_i}
M_i = \sup\{j\geq 0: \bar{x}_{i,j} \text{ is defined}\}.
\end{equation*}

Then inductively, we define the diffuse cycle as
\begin{equation*}%\label{diffuse_cycle_i}
\begin{split}
& x_{i+1}  = \xb(\bar{x}_{i,M_i}, \bar{v}_{i,M_i}), \ \ t_{i+1} = \bar{t}_{i,M_i} - \tb(\bar{x}_{i,M_i}, \bar{v}_{i,M_i}), \\
&v_{i+1} \in \mathcal{V}_{i+1} = \{v_{i+1}\in \mathbb{R}^3: n(x_{i+1})\cdot v_{i+1}>0\}.    
\end{split}
\end{equation*}
Conventionally, we denote
\begin{equation*}%\label{spec_cycle_i_0}
\bar{x}_{i,0} = x_i, \ \ \bar{t}_{i,0} = t_i, \ \ \bar{v}_{i,0} = v_i,
\end{equation*}
and
\begin{equation*}%\label{x0_v0_t0}
x_0 = x, \ \ v_0 = v, \ \ t_0= T_0.
\end{equation*}

For ease of notation, we denote $\tilde{v}_{i} = \bar{v}_{i,M_i}$, where $|v_{i}|^2 = |\tilde{v}_{i}|^2$.

For the characteristic between two diffuse boundary interactions, conventionally we denote $\bar{t}_{i,M_i+1} = t_{i+1}$, with $t_{i+1}\leq s\leq t_i$, we denote
\begin{equation*}%\label{characteristic}
\begin{split}
 X_i(s) =  X(x_i,v_i;s,t_i)  = &  \sum_{k=0}^{M_i} \mathbf{1}_{ \bar{t}_{i,k+1}\leq s \leq \bar{t}_{i,k}} \big\{\bar{x}_{i,k} - (\bar{t}_{i,k} - s)\bar{v}_{i,k} \big\}, \\
 V_i(s) = V(x_i,v_i;s,t_i) & =\sum_{k=0}^{M_i} \mathbf{1}_{ \bar{t}_{i,k+1}\leq s \leq \bar{t}_{i,k}} \bar{v}_{i,k}.
\end{split}
\end{equation*}

\end{definition}

With all stochastic cycles defined, suppose $f$ satisfies the linear equation \eqref{linear_f}. Since $w(v) = w(V_0(s))$ from the specular reflection, we apply the method of characteristic to get
\begin{align}
  &w(v) f(T_0,x,v)  \notag \\
  & =  \mathbf{1}_{t_1\leq 0} w(V_0(v)) e^{-\nu T_0} f(0,X_0(0), V_0(0)) \label{initial}\\
  & + \mathbf{1}_{t_1 \leq 0} \int_0^{T_0} e^{-\nu(T_0-s)} \int_{\mathbb{R}^3} w(V_0(s))\mathbf{k}(V_0(s),u) f(s,X_0(s),u) \dd u \dd s \label{K_0}\\
  & +\mathbf{1}_{t_1>0} \int^{T_0}_{t_1} e^{-\nu (T_0 - s)} \int_{\mathbb{R}^3} w(V_0(s)) \mathbf{k}(V_0(s),u) f(s,X_0(s),u) \dd u \dd s  \label{K_1} \\
  &  + \mathbf{1}_{t_1 \leq 0} \int_0^{T_0} e^{-\nu(T_0-s)}  w(V_0(s))  g(s,X_0(s),V_0(s)) \dd s  \label{g_0} \\
  &  +\mathbf{1}_{t_1>0} \int^{T_0}_{t_1} e^{-\nu (T_0 - s)} w(V_0(s)) g(s,X_0(s),V_0(s)) \dd s   \label{g_1} \\
  & + \mathbf{1}_{t_1>0} e^{-\nu (T_0 - t_1)} w(\tilde{v}_0)f(t_1,x_1,\tilde{v}_0), \label{f_bdr}
\end{align}
where the contribution of the boundary is bounded as
\begin{align}
   & |\eqref{f_bdr}|\leq   e^{-\nu (T_0-t_1)} w(\tilde{v}_0)  \notag\\
   & \times \int_{\prod_{j=1}^{k-1}\mathcal{V}_j} \bigg\{ \sum_{i=1}^{k-1}\mathbf{1}_{t_{i+1}\leq 0 < t_i} e^{-\nu t_1} w(V_i(0))|f(0, X_i(0), V_i(0))| \dd \Sigma_i     \label{bdr_initial}\\
  & + \mathbf{1}_{t_{k}>0} e^{-\nu (t_1-t_{k})} w(\tilde{v}_{k-1})|f(t_{k},x_k,\tilde{v}_{k-1})| \dd \Sigma_{k-1} \label{bdr_tk}\\
  & + \sum_{i=1}^{k-1} \mathbf{1}_{t_{i+1}\leq 0 < t_i}  \int^{t_i}_0 e^{-\nu (t_1-s)} \int_{\mathbb{R}^3} w(V_i(s))\mathbf{k}(V_i(s),u) f(s,X_i(s), u) \dd s \dd \Sigma_i   \label{bdr_K_0} \\
  & + \sum_{i=1}^{k-1} \mathbf{1}_{t_{i+1}>0} \int_{t_{i+1}}^{t_i} e^{-\nu (t_1-s)} \int_{\mathbb{R}^3} w(V_i(s))\mathbf{k}(V_i(s),u) f(s,X_i(s), u) \dd s \dd \Sigma_i   \label{bdr_K_i} \\
  &  + \sum_{i=1}^{k-1} \mathbf{1}_{t_{i+1}\leq 0 < t_i}  \int^{t_i}_0 e^{-\nu (t_1-s)} w(V_i(s)) g(s,X_i(s),V_i(s)) \dd \Sigma_i     \label{bdr_g_0}   \\
  &  + \sum_{i=1}^{k-1} \mathbf{1}_{t_{i+1}>0} \int_{t_{i+1}}^{t_i} e^{-\nu (t_1-s)} w(V_i(s)) g(s,X_i(s),V_i(s)) \dd \Sigma_i  \bigg\}.  \label{bdr_g_i}  
\end{align}
Here $\dd \Sigma_i$ is defined as
\begin{equation*}%\label{Sigma_i}
\dd \Sigma_i = \Big\{\prod_{j=i+1}^{k-1} \dd \sigma_j \Big\}  \times  \Big\{  \frac{1}{w(v_i)\sqrt{\mu(v_i)}}   \dd \sigma_i \Big\} \times \Big\{\prod_{j=1}^{i-1}\dd \sigma_j \Big\} ,
\end{equation*}
where $\dd \sigma_i$ is a probability measure in $\mathcal{V}_i$ given by
\begin{equation}\label{sigma_i}
\dd \sigma_i =  \sqrt{2\pi}\mu(v_i) (n(x_i)\cdot v_i)\dd v_i.
\end{equation}
Here we note that we have $\mu(v_i)$ in~\eqref{sigma_i} due to the specular reflection property $|v_i|^2 = |\tilde{v}_i|^2$.

We begin with an estimate of~\eqref{bdr_tk}, which corresponds to the scenario that the backward trajectory interacts with the diffuse boundary portion for a large amount of times.

\begin{lemma}\label{lemma:tk}
For $T_0>0$ sufficiently large, there exists constants $C_1,C_2>0$ independent of $T_0$ such that for $k = C_1 T_0^{5/4}$, and $(t_0,x_0,v_0) = (t,x,v)\in [0,T_0]\times \bar{\O}\times \mathbb{R}^3$,
\begin{equation*}%\label{tk_small}
\int_{\prod_{j=1}^{k-1} \mathcal{V}_j}\mathbf{1}_{t_k>0}  \prod_{j=1}^{k-1} \dd \sigma_j \leq \Big( \frac{1}{2}\Big)^{C_2 T_0^{5/4}}.
\end{equation*}

\end{lemma}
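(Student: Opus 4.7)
The plan is to follow the strategy of Lemma~2.3 in~\cite{G}, taking advantage of the fact that the specular reflections occurring between two consecutive diffuse interactions are deterministic: only the velocities $v_j$ re-emitted by diffuse bounces are integrated against the product measure $\prod_j \dd \sigma_j$. Hence the estimate reduces to splitting each factor $\mathcal{V}_j$ into a ``good'' subset, on which the flight time $\tau_j := t_j - t_{j+1}$ between two diffuse reflections is bounded below, and a ``bad'' subset of small $\dd \sigma_j$-measure.

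For a small parameter $\delta > 0$ to be tuned later, I would introduce
\[
\mathcal{V}_j^\delta := \{ v \in \mathcal{V}_j : n(x_j) \cdot v \geq \delta, \ |v| \leq \delta^{-1} \}.
\]
From the explicit form of $\dd \sigma_j$ in~\eqref{sigma_i} and the Gaussian decay of $\mu$, a direct computation gives
\[
\int_{\mathcal{V}_j \setminus \mathcal{V}_j^\delta} \dd \sigma_j \leq C_\ast \delta,
\]
uniformly in $x_j \in \partial\Omega_1$. For $v_j \in \mathcal{V}_j^\delta$, the specific cylindrical-type geometry enters: since the specular reflections at $x_1 = \pm L$ preserve the tangential components $(v_{j,2},v_{j,3})$, between two diffuse bounces the position $(x_2(s),x_3(s))$ moves uniformly with velocity $(v_{j,2},v_{j,3})$, and $\tau_j$ equals the transit time of a straight $2$-D chord in the cross-section of $\Omega$ divided by $|(v_{j,2},v_{j,3})|$. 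A uniform geometric lower bound on this chord length (with nearly tangential directions absorbed into the bad set) then yields
\[
\tau_j \geq c_0 \delta
\]
for some $c_0 > 0$ depending only on $\Omega$.

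On $\{t_k > 0\}$ the identity $\sum_{j=0}^{k-1} \tau_j = T_0 - t_k \leq T_0$ forces the number of indices $j$ with $v_j \in \mathcal{V}_j^\delta$ to be at most $T_0/(c_0\delta)$. Bounding $\mathbf{1}_{t_k>0}$ by the indicator of this event and integrating each $v_j$ independently against $\dd \sigma_j$, one obtains
\[
\int_{\prod_j \mathcal{V}_j} \mathbf{1}_{t_k > 0} \prod_{j=1}^{k-1} \dd \sigma_j \leq \sum_{m=0}^{\lfloor T_0/(c_0\delta)\rfloor} \binom{k-1}{m}(C_\ast \delta)^{k-1-m}.
\]
Choosing $\delta = T_0^{-1/4}$ and $k = C_1 T_0^{5/4}$ with $C_1 > 1/c_0$ large enough, the upper summation index equals $T_0^{5/4}/c_0$, and each term is bounded by
\[
2^{k}(C_\ast T_0^{-1/4})^{(C_1-1/c_0)T_0^{5/4}},
\]
which is dominated by $(1/2)^{C_2 T_0^{5/4}}$ for $T_0$ large, with a suitable $C_2 > 0$.

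The main obstacle is the geometric lower bound $\tau_j \geq c_0 \delta$ on $\mathcal{V}_j^\delta$. Unlike in the pure-diffuse case, the specular bounces do not reduce the speed, so the flight path between consecutive diffuse interactions can be long and topologically intricate; it is precisely the special structure of~\eqref{bdr_portion} that reduces this to a chord-length problem in the $2$-D cross-section, which is the technical heart of the proof. Excluding the exceptional nearly-tangent directions and absorbing them into the bad set, of total $\dd\sigma_j$-measure $O(\delta)$, is a measure-theoretic estimate using the smoothness of the cross-section. Once this step is established, the combinatorial counting and optimization of $\delta$ and $k$ are standard.
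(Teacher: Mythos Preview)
Your overall strategy matches the paper's (and Guo's): split each $\mathcal{V}_j$ into a non-grazing set $\mathcal{V}_j^\delta$ and a small-measure remainder, bound the number of good indices by a time-budget argument, and finish by combinatorics. The paper's proof is essentially a one-line reduction to \cite{G}, so in that sense you are filling in more detail than the paper does.

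However, there is a genuine gap in your key quantitative claim. You assert that on $\mathcal{V}_j^\delta$ one has $\tau_j\geq c_0\delta$, arguing via the $2$-D projection. Even for a genuine cylinder $D\times[-L,L]$ (which the paper does \emph{not} assume), this fails: for a disk cross-section the $2$-D chord equals $2R\,(n\cdot v)/|v_\perp|$, so $\tau_j=2R\,(n\cdot v)/|v_\perp|^2\geq 2R\,\delta/|v|^2\geq 2R\,\delta^3$ on $\mathcal{V}_j^\delta$, which is cubic in $\delta$, not linear. Attempting to enforce a uniform (order-one) lower bound on the chord by excluding nearly-tangent $2$-D directions does not help: the set $\{\cos\theta<\epsilon\}$ has $\dd\sigma_j$-measure of order $\epsilon^2$, so keeping the bad set of size $O(\delta)$ forces $\epsilon\lesssim\sqrt{\delta}$ and gives only $\tau_j\gtrsim\delta^{3/2}$. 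With the correct cubic (or $3/2$) bound your choice $\delta=T_0^{-1/4}$ yields $N\gtrsim T_0^{7/4}\gg k$, and the counting collapses.

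The paper sidesteps all of this: it does \emph{not} use the special geometry in this lemma. It simply bounds $\tau_j=t_j-t_{j+1}\geq t_j-\bar t_{j,1}$, i.e.\ by the time to the \emph{first} boundary hit (specular or diffuse), and invokes the standard smooth-domain estimate $t_j-\bar t_{j,1}\geq \delta^3/C_\Omega$ on $\mathcal{V}_j^\delta$. With this bound one takes $\delta$ small but \emph{fixed} (independent of $T_0$), so that $N\lesssim T_0\ll k=C_1T_0^{5/4}$, and your combinatorial sum is then dominated by $(N+1)2^{k}(C_\ast\delta)^{k-1-N}\leq (1/2)^{C_2T_0^{5/4}}$ for $T_0$ large. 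In short: drop the $2$-D chord heuristic, use the first-segment bound $\tau_j\geq \delta^3/C_\Omega$, and take $\delta$ constant rather than $T_0^{-1/4}$; then your combinatorial closing works verbatim.
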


\begin{proof}
We choose $\delta\ll 1$ to be small, and consider the non-grazing sets as 
\begin{equation*}%\label{non_grazing}
\mathcal{V}_j^\delta    =  \{v_j\in \mathcal{V}_j : n(x_j)\cdot v_j \geq \delta \text{ and }|v_j|\leq \frac{1}{\delta}\}.
\end{equation*}
Then the integration over the grazing set induces a small magnitude number:
\begin{equation*}%\label{integrate_non_grazing}
\int_{\mathcal{V}_j \backslash \mathcal{V}_j^\delta} \dd \sigma_j \leq C\delta.
\end{equation*}
For the non-grazing set, we can obtain a lower bound of the backward exit time, that is
\begin{equation*}%\label{lower_time}
t_i-t_{i+1}\geq t_i - \bar{t}_{i,1} \geq \frac{\delta^3}{C_\O}.
\end{equation*}
Hence there can be at most $\big[ \frac{C_\O T_0}{\delta^3} \big] + 1$ number of $v_j\in \mathcal{V}_j^\delta$ for $1\leq j\leq k-1$. Then we follow the argument in~\cite{G} to conclude the lemma.
\end{proof}

To prove Proposition \ref{prop:linfty}, we need to estimate every term in the characteristic formula~\eqref{initial} - \eqref{f_bdr}. First we estimate the boundary term \eqref{f_bdr} in the following lemma.

\begin{lemma}\label{lemma:bdr}
For the boundary term~\eqref{f_bdr}, we have
\begin{equation*}%\label{bdr_bdd}
\begin{split}
 w(\tilde{v}_0)|f(t_1,x_1,\tilde{v}_0)| \leq   & C(\theta) e^{-\nu t_1} \Vert w f_0\Vert_\infty+ o(1)e^{-\lambda t_1}\sup_{0\leq s\leq T_0} \Vert e^{\lambda s}w f(s)\Vert_\infty  \\
    &+   C(T_0) e^{-\lambda t_1}  \sup_{0\leq s\leq T_0}  \Big\Vert e^{\lambda s} \frac{w}{\langle v\rangle} g(s) \Big\Vert_\infty  +  C(T_0) e^{-\lambda t_1} \sup_{0\leq s\leq T_0} e^{\lambda s}\Vert f(s)\Vert_{L^2} .
\end{split}
\end{equation*}
\end{lemma}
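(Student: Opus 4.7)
The plan is to estimate each of the six pieces \eqref{bdr_initial}--\eqref{bdr_g_i} appearing in the expansion of \eqref{f_bdr} and to sum them to match the four terms in the conclusion, with the choice $k=C_1 T_0^{5/4}$ so that Lemma \ref{lemma:tk} can be invoked. Each measure $\dd\Sigma_i$ is uniformly finite because $w(v_i)^{-1}\mu(v_i)^{-1/2}\dd\sigma_i$ is a bounded Gaussian measure on $\mathcal{V}_i$ (using $\theta<1/4$) and the other $\dd\sigma_j$ are probability measures. The many-bounce term \eqref{bdr_tk} is absorbed into the $o(1)\sup_s \Vert e^{\lambda s}wf(s)\Vert_\infty$ factor by writing $|f(t_k,\cdot)|\leq w(\tilde v_{k-1})^{-1}e^{-\lambda t_k}\sup_s\Vert e^{\lambda s}wf(s)\Vert_\infty$ and invoking Lemma \ref{lemma:tk}, which supplies the tiny prefactor $(1/2)^{C_2 T_0^{5/4}}$. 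The initial-data pieces \eqref{bdr_initial} are controlled by $|f(0,\cdot)|\leq w^{-1}\Vert wf_0\Vert_\infty$ together with the identity $w(V_i(0))=w(v_i)$ (specular reflection preserves $|v|$). The source pieces \eqref{bdr_g_0}--\eqref{bdr_g_i} are controlled by $|g|\leq \langle v\rangle w^{-1}\Vert\tfrac{w}{\langle v\rangle}g\Vert_\infty$, $\nu\sim\langle v\rangle$ from \eqref{nu_bdd}, and $\int e^{-\nu(t_1-s)}\langle V_i(s)\rangle e^{-\lambda s}\dd s\lesssim e^{-\lambda t_1}$.

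\textbf{The main obstacle: the kernel pieces \eqref{bdr_K_0}--\eqref{bdr_K_i}.} These require a one-step $L^2$--$L^\infty$ bootstrap. Rewriting $w(V_i(s))\mathbf{k}(V_i(s),u)=\mathbf{k}_\theta(V_i(s),u)w(u)$, I would split $\mathbf{k}_\theta$ via Lemma \ref{lemma:k_theta} into a small part supported on $\{|u|>N\}\cup\{|V_i(s)-u|\leq 1/N\}$ and its bounded complement. The small part contributes $o(1)e^{-\lambda t_1}\sup_s\Vert e^{\lambda s}wf(s)\Vert_\infty$ by \eqref{K_N_small}. On the bounded complement I would apply the characteristic expansion \eqref{initial}--\eqref{f_bdr} a second time to $f(s,X_i(s),u)$. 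The resulting initial, source, boundary and many-bounce sub-pieces are all of the claimed form by the arguments in the previous paragraph, so the only new sub-piece is a doubly-iterated kernel term of the schematic form
\[
\int \mathbf{k}_\theta(V_i(s),u)\mathbf{k}_\theta(V'(s'),u')\,e^{-\nu(t_1-s)-\nu(s-s')}\,\frac{f(s',X'(s'),u')}{w(u')}\,\dd u'\,\dd u\,\dd s'\,\dd s,
\]
where $X'(s')$ is the backward trajectory through $(X_i(s),u)$ at time $s$.

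\textbf{Change of variable and the geometric heart of the argument.} For fixed $(s,s',u')$ with $s-s'\geq \varepsilon$ I would convert the $\dd u$-integral into a spatial $\dd y$-integral via $u\mapsto y=X'(s')$; the contribution from $|s-s'|<\varepsilon$ is at most $O(\varepsilon)\sup_s\Vert e^{\lambda s}wf(s)\Vert_\infty$ and is absorbed into the $o(1)$ term. The crucial geometric observation is that in the domain \eqref{bdr_portion}, between two consecutive diffuse reflections the trajectory is piecewise linear with specular bounces only at the two parallel plates $x_1=\pm L$; since $|u|\leq N$, only finitely many bounces occur in time $s-s'$, and the trajectory \emph{unfolds} into a single straight line in the extended slab $\{-NL<x_1<NL\}$. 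In this unfolded picture the Jacobian is explicitly $|\det\partial y/\partial u|\sim (s-s')^3$, bounded below on the truncated region. Cauchy--Schwarz in $(y,u')$ then bounds the nested kernel piece by $C(T_0,N,\varepsilon)\,e^{-\lambda t_1}\sup_s e^{\lambda s}\Vert f(s)\Vert_{L^2}$. Collecting all five contributions and choosing $\varepsilon$, $1/N$, $T_0^{-1}$ sufficiently small yields the lemma. The delicate step is the unfolding argument and the Jacobian computation for trajectories with specular bounces, which is precisely where the special shape of the boundary \eqref{bdr_portion} is essential.
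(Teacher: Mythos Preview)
Your treatment of \eqref{bdr_initial}, \eqref{bdr_tk}, \eqref{bdr_g_0}--\eqref{bdr_g_i} matches the paper. The gap is in your handling of the kernel pieces \eqref{bdr_K_0}--\eqref{bdr_K_i}, where your route diverges from the paper's and does not close.

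The paper does \emph{not} perform a second characteristic expansion inside Lemma~\ref{lemma:bdr}. Instead, for \eqref{bdr_K_i} it exploits the diffuse integration variable $v_i$ already present in $\dd\sigma_i$: after truncating $|v_i|\leq N$, $s<t_i-\delta$, and $|u|\leq N$ with $|V_i(s)-u|>1/N$, it makes the change of variable $v_i\mapsto y=x_i-(t_i-s)v_i$ with Jacobian $(t_i-s)^3\geq\delta^3$, applies the unfolding map $\mathcal{M}:\tilde\Omega\to\Omega$, and bounds $\int_{|u|\leq N}\int_\Omega |f(s,x,u)|\,\dd x\,\dd u$ by $\Vert f(s)\Vert_{L^2}$ via H\"older. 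No second iteration is needed because the pair $(v_i,u)$ already furnishes the two integrations required for the $L^2$ bound.

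Your proposal instead expands $f(s,X_i(s),u)$ a second time via \eqref{initial}--\eqref{f_bdr} and then changes variable in $u$. The problem is the boundary sub-piece \eqref{f_bdr} produced by that second expansion: it is $e^{-\nu(u)(s-t_1^u)}w(\tilde v_0^u)f(t_1^u,x_1^u,\tilde v_0^u)$, i.e.\ exactly the quantity Lemma~\ref{lemma:bdr} is meant to bound. Your claim that it is ``of the claimed form by the arguments in the previous paragraph'' is circular---those arguments do not cover the kernel parts \eqref{bdr_K_0}--\eqref{bdr_K_i} of this new boundary piece. Nor can you bound it crudely by $e^{-\lambda t_1^u}\sup_s\Vert e^{\lambda s}wf\Vert_\infty$: after integrating $\mathbf{k}_\theta(V_i(s),u)$ on the bounded-kernel region this carries an $O(C_N N^3)$ prefactor, not $o(1)$, so it cannot be absorbed. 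In the paper's architecture the second expansion is postponed to the proof of Proposition~\ref{prop:linfty} (for \eqref{K_1}), where Lemma~\ref{lemma:bdr} is already available to control \eqref{u_bdr}; attempting the second expansion inside Lemma~\ref{lemma:bdr} itself collapses that separation. The fix is simply to change variable in $v_i$ rather than in $u$, as the paper does.
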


\begin{proof}
Since $\dd \sigma_i$ in~\eqref{sigma_i} is a probability measure, \eqref{bdr_initial} is directly bounded as
\begin{equation}\label{bdr_initial_bdd}
\eqref{bdr_initial}\leq C(\theta) e^{-\nu t_1}   \Vert w f_0\Vert_\infty,
\end{equation}
where the constant $C(\theta)$ comes from
\[C(\theta) = \int_{\mathcal{V}_i}   |n(x_i)\cdot v_i| \sqrt{\mu(v_i)} w^{-1}(v_i) \dd v_i.\]

For~\eqref{bdr_tk}, with $\lambda \ll \nu$ and $k=C_1 T_0^{5/4}$, we apply Lemma \ref{lemma:tk} to have
\begin{equation}\label{bdr_tk_bdd}
\begin{split}
 \eqref{bdr_tk}   &  \leq  e^{-\lambda t_1}\int_{\prod_{j=1}^{k-1} \mathcal{V}_j} \mathbf{1}_{t_k>0}  |e^{\lambda t_k} w(\tilde{v}_{k-1}) f(t_k,x_k,\tilde{v}_{k-1})| \dd \Sigma_{k-1} \\
 &\leq o(1)e^{-\lambda t_1} \sup_{0\leq s\leq T_0}\Vert e^{\lambda s } w f(s)\Vert_\infty.
\end{split}
\end{equation}

For \eqref{bdr_g_0} and \eqref{bdr_g_i} they are directly bounded as
\begin{equation}\label{bdr_g_bdd}
\begin{split}
|\eqref{bdr_g_0} + \eqref{bdr_g_i}| & \leq      Cke^{-\lambda t_1} \sup_{0\leq s\leq T_0} e^{\lambda s}\Big\Vert \frac{w}{\langle v\rangle} g(s) \Big\Vert_\infty   \int_0^{T_0} e^{-\nu(V_i(s))(T_0-s)/2} \langle V_i(s) \rangle \dd s \\ 
& \leq  Cke^{-\lambda t_1} \sup_{0\leq s\leq T_0} e^{\lambda s}\Big\Vert \frac{w}{\langle v\rangle} g(s) \Big\Vert_\infty .
\end{split}
\end{equation}

Then we estimate \eqref{bdr_K_i}. We denote $\mathbf{k}_\theta(v,u) := \mathbf{k}(v,u)\frac{e^{\theta|v|^2}}{e^{\theta |u|^2}}.$ We focus on estimating
\begin{equation}\label{iteration_i}
\begin{split}
    & \int_{\prod_{j=1}^i \mathcal{V}_j}\mathbf{1}_{t_{i+1}>0 }  \prod_{j=1}^i\dd \sigma_j \mu^{-1/2}(v_i) w^{-1}(v_i)   \\
    & \times     \int_{t_{i+1}}^{t_i} e^{-\nu(t_1 -s)} \int_{\mathbb{R}^3} \mathbf{k}_\theta(V_i(s),u) w(u)f(s,X_i(s),u) \dd s .
\end{split}
\end{equation}

First we decompose the $\dd s$ integral into $\mathbf{1}_{s\geq t_i-\delta} + \mathbf{1}_{s< t_i-\delta}$. By \eqref{k_theta} in Lemma \ref{lemma:k_theta}, the contribution of the first term reads
\begin{align}
 \eqref{iteration_i} \mathbf{1}_{s\geq t_i-\delta}   &  \leq \int_{\prod_{j=1}^{i}\mathcal{V}_j} \prod_{j=1}^{i}\dd \sigma_j \mu^{-1/2}(v_i) w^{-1}(v_i) \notag\\ 
 & \times \int^{t_i}_{\max\{t_{i+1},t_i-\delta\}} e^{-\nu (t_1-s)} \int_{\mathbb{R}^3} \mathbf{k}_\theta(V_i(s),u) w(u) f(s,X_i(s),u) \dd s  \notag\\
 &\leq o(1)e^{-\lambda t_1} \sup_{0\leq s\leq T_0}\Vert e^{\lambda s} w f(s)\Vert_\infty . \label{bdr_s_small_bdd}
\end{align}

Next we decompose the $v_i$ integral into $\mathbf{1}_{|v_i|\geq N} + \mathbf{1}_{|v_i|<N}$. By \eqref{k_theta} in Lemma \ref{lemma:k_theta}, the contribution of the first term reads
\begin{align}
    & \eqref{iteration_i} \mathbf{1}_{|v_i|\geq N}  \leq \int_{\prod_{j=1}^{i-1}\mathcal{V}_j} \prod_{j=1}^{i-1}\dd \sigma_j  \int_{\mathcal{V}_i} \mathbf{1}_{|v_i|\geq N} \sqrt{\mu(v_i)} w^{-1}(v_i)|n(x_i)\cdot v_i| \dd v_i   \notag \\
    &\ \ \ \ \ \ \ \ \ \ \ \ \ \ \ \ \  \times \int_{t_{i+1}}^{t_i} e^{-\nu(t_1 -s)} \int_{\mathbb{R}^3} \mathbf{k}_\theta(V_i(s),u) w(u) f(s,X_i(s),u) \dd s \notag\\
    & \leq o(1) e^{-\lambda t_1} \sup_{0\leq s\leq T_0}\Vert e^{\lambda s} w f(s)\Vert_\infty.   \label{bdr_v_large_bdd}
\end{align}

Then we decompose the $u$ integral into $\mathbf{1}_{|u|\geq N \text{ or } |V_{i}(s)-u|\leq \frac{1}{N}} + \mathbf{1}_{|u|<N, \ |V_i(s)-u|> \frac{1}{N}}$. By \eqref{K_N_small} in Lemma \ref{lemma:k_theta}, the contribution of the first term reads
\begin{align}
    & \eqref{iteration_i} \mathbf{1}_{|u|\geq N \text{ or } |V_i(s)-u|\leq \frac{1}{N}}  \leq \int_{\prod_{j=1}^{i}\mathcal{V}_j} \prod_{j=1}^{i}\dd \sigma_j  \mu^{-1/2}(v_i) w^{-1}(v_i)  \notag\\
    &\ \ \times  \int_{t_{i+1}}^{t_i} e^{-\nu(t_1 -s)} \int_{\mathbb{R}^3}  \mathbf{1}_{|u|\geq N \text{ or } |V_i(s)-u|\leq \frac{1}{N}} \mathbf{k}_\theta(V_i(s),u) w(u) f(s,X_i(s),u) \dd s \notag\\
    & \leq o(1) e^{-\lambda t_1} \sup_{0\leq s \leq T_0}\Vert e^{\lambda s}w f(s)\Vert_\infty.   \label{bdr_u_large_bdd}
\end{align}

%{\color{red}

Now we consider the intersection of all other cases, where we have $|v_i|\leq N, \ s<t_i-\delta$, and $|u|<N, \ |V_i(s)-u|>\frac{1}{N}$. The conditions of $v_i$ and $u$ imply that $\mathbf{k}(V_i(s),u)\leq C_N$ from \eqref{k_theta_bdd} in Lemma \ref{lemma:k_theta}. 

For fixed $x_i$ and $s\in [t_{i+1},t_i]$, for $v_i\in \mathcal{V}_i$ such that $|v_i|\leq N$ and $X_i(s)\in \bar{\O}$, we denote 
\[\tilde{\O} = \{x\in \mathbb{R}^3| x=x_i-(t_i-s)v_i, \ |v_i|\leq N, X_i(s)\in \bar{\O}\}.\] Then we define a map 
\[\mathcal{M}:  x_i - (t_i-s)v_i \in \tilde{\O} \to X_i(s) \in \O .\] 
Since $|v_i|\leq N$, for any $x\in \tilde{\O}$, we have $x \cdot e_1 \in [-L-NT_0,L+NT_0]$. We denote \[\tilde{\O}_0 = \{x\in \tilde{\O}| x \cdot e_1 \in [-L,L]\},\] 
and for $i>0$, we denote 
\begin{equation}\label{omega_i_+}
\tilde{\O}_i = \{x\in \tilde{\O}| x \cdot e_1 \in [iL,(i+2)L], \ i>0\}.    
\end{equation}
Similarly, for $i<0$ we denote 
\begin{equation}\label{omega_i_-}
\tilde{\O}_i = \{x\in \tilde{\O}|x \cdot e_1 \in [(i-2)L,iL], \ i<0\}.    
\end{equation}
Then we have 
\[\tilde{\O} = \bigcup_{i=-1-NT_0/L}^{1+NT_0/L} \tilde{\O}_i.\]

If $x\in \tilde{\O}_0$, then $\mathcal{M}(x) = x$. If $x\in \tilde{\O}_i$ with even $i$, then $\mathcal{M}(x) = x-iL$. If $x\in \tilde{\O}_i$ with odd $i$, then $\mathcal{M}(x) = -(x-iL)$.

Then we compute as
\begin{align}
  & \eqref{iteration_i} \mathbf{1}_{|v_i|\leq N} \mathbf{1}_{s<t_i-\delta} \mathbf{1}_{|u|<N, \ |V_i(s)-u|>\frac{1}{N}}  \notag\\
  &\leq   C_N\int_{\prod_{j=1}^{i-1}\mathcal{V}_j} \prod_{j=1}^{i-1}\dd \sigma_j \int_{\mathcal{V}_i} \mathbf{1}_{|v_i|< N} \sqrt{\mu(v_i)} |n(x_i)\cdot v_i| \dd v_i   \notag\\ 
  &\times \int_{t_{i+1}}^{t_i-\delta}  e^{-\nu(t_1 -s)} \int_{|u|\leq N}  f(s,\mathcal{M}(x_i-(t_i-s)v_i),u) \dd u\dd s  \notag \\
  & \leq \frac{C_N}{\delta^3}\int_{\prod_{j=1}^{i-1}\mathcal{V}_j} \prod_{j=1}^{i-1}\dd \sigma_j  \int^{t_i-\delta}_{0}  e^{-\nu(t_1-s)}     \int_{|u|\leq N} \int_{\tilde{\O}} f(s,\mathcal{M}(y),u) \dd u \dd y \dd s.       \label{other_case}
\end{align}
In the last line we have applied the change of variable $v_i \to y = x_i-(t_i-s)v_i \in \tilde{\O}$ with Jacobian 
\[\Big|\det\Big(\frac{\p x_i-(t_i-s)v_i}{\p v_i} \Big) \Big| = (t_i-s)^3 \geq \delta^3 .\]

Then we proceed the computation as
\begin{align*}
  \eqref{other_case}  & \leq \frac{C_N}{\delta^3} \int_{\prod_{j=1}^{i-1} \mathcal{V}_j} \prod_{j=1}^{i-1} \dd \sigma_j \int_0^{t_i-\delta} e^{-\nu(t_1-s)} \int_{|u|\leq N} \sum_{i=-1-NT_0/L}^{1+NT_0/L} \int_{\tilde{\O}_i} f(s,\mathcal{M}(y),u) \dd u \dd y \dd s   \\ 
  & \leq \frac{C_N}{\delta^3} \int_{\prod_{j=1}^{i-1} \mathcal{V}_j} \prod_{j=1}^{i-1} \dd \sigma_j \int_0^{t_i-\delta} e^{-\nu(t_1-s)} \int_{|u|\leq N} \sum_{i=-1-NT_0/L}^{1+NT_0/L} \int_{\O} f(s,x,u) \dd u \dd x \dd s.
\end{align*}
In the second line we have applied the change of variable $y \in \tilde{\O}_i \to x=\mathcal{M}(y)\in \O$. When $i$ is even, the Jacobian is $|\det(\frac{\p [x-iL]}{\p x})| =1$. When $i$ is odd, the Jacobian is $|\det(\frac{\p [-(x-iL)]}{\p x})|=1$.

Then we apply the H\"older inequality to have
\begin{align}
  \eqref{other_case}    & \leq  C_{N,\delta,T_0,\O}   \int_{\prod_{j=1}^{i-1}\mathcal{V}_j} \prod_{j=1}^{i-1}\dd \sigma_j \times    \int_{0}^{t_1} e^{-\nu(t_1-s)} \Vert f(s)\Vert_{L^2} \dd s \notag\\
  & \leq  C_{N,\delta,T_0,\O} e^{-\lambda t_1} \sup_{0\leq s\leq T_0} e^{\lambda s} \Vert f(s)\Vert_{L^2}  . \label{other_case_bdd}
\end{align}

Collecting \eqref{bdr_s_small_bdd}, \eqref{bdr_v_large_bdd}, \eqref{bdr_u_large_bdd} and \eqref{other_case_bdd}, we conclude that
\begin{equation}\label{bdr_K_i_bdd}
\eqref{bdr_K_i} \lesssim o(1) e^{-\lambda t_1} \sup_{0\leq s\leq T_0} \Vert e^{\lambda s}w f(s)\Vert_\infty + C_{N,\delta,k,T_0,\O} e^{-\lambda t_1} \sup_{0\leq s\leq T_0} e^{\lambda s} \Vert f(s)\Vert_{L^2}. 
\end{equation}

By the same computation, we have the same bound for \eqref{bdr_K_0}:
\begin{equation}\label{bdr_K_0_bdd}
\eqref{bdr_K_0} \lesssim o(1) e^{-\lambda t_1} \sup_{0\leq s\leq T_0} \Vert e^{\lambda s}w f(s)\Vert_\infty + C_{N,\delta,k,T_0,\O} e^{-\lambda t_1} \sup_{0\leq s\leq T_0} e^{\lambda s} \Vert f(s)\Vert_{L^2}.  
\end{equation}

Summarizing \eqref{bdr_initial_bdd}, \eqref{bdr_tk_bdd}, \eqref{bdr_g_bdd}, \eqref{bdr_K_0_bdd} and \eqref{bdr_K_i_bdd}, we conclude the lemma.
\end{proof}

Now we are in position to prove Proposition \ref{prop:linfty}.

\begin{proof}[\textbf{Proof of Proposition \ref{prop:linfty}}]
First of all, \eqref{initial}, \eqref{g_0} and \eqref{g_1} are bounded as
\begin{equation}\label{initial_g_bdd}
|\eqref{initial}| + |\eqref{g_0}| + |\eqref{g_1}| \leq e^{-\nu_0 T_0} \Vert w f_0\Vert_\infty + Ce^{- \lambda T_0}\sup_{0\leq s\leq T_0}\Big\Vert e^{\lambda s} \frac{w}{\langle v\rangle} g(s) \Big\Vert_\infty.
\end{equation}
Moreover, \eqref{f_bdr} is bounded by Lemma \ref{lemma:bdr} as
\begin{equation}\label{f_bdr_bdd}
\begin{split}
 \eqref{f_bdr} \leq   &  C(\theta) e^{-\nu_0 T_0} \Vert w f_0\Vert_\infty + o(1)e^{-\lambda T_0}\sup_{0\leq s\leq T_0}\Vert e^{\lambda s} w f(s)\Vert_\infty  \\
    &  + C(T_0) \big[e^{-\lambda T_0} \Big\Vert e^{\lambda s} \frac{w}{\langle v\rangle} g(s) \Big\Vert_\infty +  e^{-\lambda T_0}\sup_{0\leq s\leq T_0} \Vert f(s)\Vert_{L^2} \big].
\end{split}
\end{equation}

Then we focus on~\eqref{K_1}. We expand $f(s,X_0(s),u)$ using the the characteristic \eqref{initial} - \eqref{f_bdr} again along $u$. 

For simplicity, we include a superscript $u$ to all notations in Definition \ref{def:sto_cycle_spec} and Definition \ref{def:sto_cycle_diffuse}, where the trajectory starts from $t_0^u = s$, $\bar{x}_{0,0}^u = X_0(s)$, and $\bar{v}_{i,0}^u = u$, corresponding to \eqref{spec_cycle_0}.

Then we have
\begin{align}
    & \eqref{K_1} = \mathbf{1}_{t_1>0} \int^{T_0}_{t_1} \dd s e^{-\nu (T_0 -s)}  \int_{\mathbb{R}^3} \dd u \frac{w(V_0(s))}{w(u)}\mathbf{k}(V_0(s),u) \notag \\
    &\times \Big\{ \mathbf{1}_{t_1^u \leq 0} e^{-\nu(u) s} w(V_0^u(0))f(0,X_0^u(0),V_0^u(0))       \label{u_initial} \\
    &  +\mathbf{1}_{t_1^u\leq 0} \int_0^{s}  e^{-\nu(u)(s-s')} \dd s'\int_{\mathbb{R}^3} w(V_0^u(s'))\mathbf{k}(V_0^u(s'),u') f(s',X_0^u(s'), u')    \dd u' \label{u_K0} \\
    &  +\mathbf{1}_{t_1^u> 0} \int_{t_1^u}^{s}  e^{-\nu(u)(s-s')} \dd s'\int_{\mathbb{R}^3} w(V_0^u(s')) \mathbf{k}(V_0^u(s'),u') f(s',X_0^u(s'), u')  \dd u' \label{u_K1}\\
    & + \mathbf{1}_{t_1^u\leq 0} \int_{0}^{s} e^{-\nu(u)(s-s')} w(V_0^u(s')) g(s',X_0^u(s'),V_0^u(s')) \dd s' \label{u_g0} \\
    & + \mathbf{1}_{t_1^u> 0} \int_{t_1^u}^{s} e^{-\nu(u)(s-s')} w(V_0^u(s')) g(s',X_0^u(s'),V_0^u(s')) \dd s' \label{u_g1} \\
    & + \mathbf{1}_{t_1^u > 0} e^{-\nu(u)(s-t_1^u)} w(\tilde{v}_0^u) f(t_1^u,x_1^u,\tilde{v}_0^u) \Big\}.   \label{u_bdr}
\end{align}

The contribution of \eqref{u_initial} in \eqref{K_1} is bounded by
\begin{align}
    & \int^{T_0}_{t_1} \dd s e^{-\nu_0 (T_0-s)} \int_{\mathbb{R}^3} \dd u \mathbf{k}_\theta(V_0(s),u) e^{-\nu_0 s} \Vert w f_0\Vert_\infty    \notag \\
    &\leq C(\theta)\int_{t_1}^{T_0} \dd s e^{-\nu(T_0-s)/2} e^{-\frac{\nu}{2}T_0} \Vert wf_0\Vert_\infty  \leq C(\theta)e^{-\frac{\nu_0 T_0}{2}} \Vert w f_0\Vert_\infty . \label{u_initial_bdd}
\end{align}
In the second line we have used Lemma \ref{lemma:k_theta}.

The contribution of \eqref{u_g0} and \eqref{u_g1} in \eqref{K_1} are bounded by
\begin{align}
    & \int_{t_1}^{T_0} \dd s e^{-\nu (T_0-s)} \int_{\mathbb{R}^3} \dd u  \mathbf{k}_\theta(V_0(s),u) \int^s_0 \dd s' e^{-\nu(u)(s-s')} \langle V_0^u(s') \rangle e^{-\lambda s'} \sup_{0\leq s\leq T_0}\Big\Vert  e^{\lambda s}\frac{w}{\langle v\rangle }g(s)\Big\Vert_\infty  \notag  \\
    & \leq C \sup_{0\leq s\leq T_0}\Big\Vert e^{\lambda s} \frac{w}{\langle v\rangle }g(s)\Big\Vert_\infty \int_{t_1}^{T_0} \dd s e^{-\nu (T_0-s)} e^{-\lambda s}\int_{\mathbb{R}^3} \dd u  \mathbf{k}_\theta(V_0(s),u) \notag \\
    &\leq C \sup_{0\leq s\leq T_0}\Big\Vert  e^{\lambda s}\frac{w}{\langle v\rangle }g(s)\Big\Vert_\infty. \label{u_g_bdd}
\end{align}
In the second line we have used \eqref{nu_bdd} and Lemma \ref{lemma:k_theta}.

The contribution of the boundary term in~\eqref{u_bdr} can be bounded by Lemma \ref{lemma:bdr}:
\begin{align}
  \eqref{u_bdr}  & \leq \int_{t_1}^{T_0} \dd s e^{-\nu(T_0-s)} \int_{\mathbb{R}^3} \dd u \mathbf{k}_\theta(V_0(s),u) e^{-\nu (s-t_1^u)} \notag \\
 & \times \Big\{ e^{-\nu_0 t_1^u}  C(\theta)\Vert w f_0\Vert_\infty + o(1)e^{-\lambda t_1^u}\sup_{0\leq s\leq T_0}\Vert e^{\lambda s} w f(s)\Vert_\infty \notag \\
  &+ Ck \big[e^{-\lambda t_1^u}\sup_{0\leq s\leq T_0} \Big\Vert e^{\lambda s}\frac{w}{\langle v\rangle} g(s) \Big\Vert_\infty  +  e^{-\lambda t_1^u} \sup_{0\leq s \leq T_0} e^{\lambda s}\Vert f(s)\Vert_{L^2}  \big] \Big\} \notag\\
  &\leq  C(\theta) \int_{t_1}^{T_0} \dd s e^{-\nu(T_0-s)}    \times \Big[ e^{-\frac{\nu_0 s}{2}} \Vert w f_0\Vert_\infty + o(1)e^{-\lambda s}\sup_{0\leq s\leq T_0} \Vert e^{\lambda s} wf(s)\Vert_\infty  \notag\\
  & + C(T_0) \big[e^{-\lambda s}\sup_{0\leq s\leq T_0} \Big\Vert e^{\lambda s} \frac{w}{\langle v\rangle}g(s) \Big\Vert_\infty  +  e^{-\lambda s} \sup_{0\leq s \leq T_0} e^{\lambda s}\Vert f(s)\Vert_{L^2}  \big]   \Big]     \notag\\
  &\leq   C(\theta) e^{-\frac{\nu_0 T_0}{2}} \Vert w f_0\Vert_\infty + o(1) e^{-\lambda T_0}\sup_{0\leq s\leq T_0}\Vert e^{\lambda s}w f(s)\Vert_\infty  \notag\\
  & + C(T_0)\big[e^{-\lambda T_0}\sup_{0\leq s\leq T_0} \Big\Vert e^{\lambda s}\frac{w}{\langle v\rangle}g(s) \Big\Vert_\infty  +   e^{-\lambda T_0} \sup_{0\leq s \leq T_0} e^{\lambda s}\Vert f(s)\Vert_{L^2} \big].   \label{u_bdr_bdd}
\end{align}
In the third line we have used Lemma \ref{lemma:k_theta}.

Then we focus on the contribution of \eqref{u_K1} in \eqref{K_1}. First we decompose the $\dd s' $ integral into $\mathbf{1}_{s-s'< \delta} + \mathbf{1}_{s-s'\geq \delta}$. Applying \eqref{k_theta} in Lemma \ref{lemma:k_theta} twice, the contribution of the first term reads
\begin{align}
    & \eqref{u_K1} \mathbf{1}_{s-s'<\delta} \notag \\
   & \leq   \int_{t_1}^{T_0} \dd s e^{-\nu(T_0-s)} \int_{\mathbb{R}^3} \dd u \mathbf{k}_\theta(V_0(s),u) \int^s_{\max\{s-\delta,t_1^u\}} e^{-\nu(u)(s-s')} \dd s' \notag \\
   & \times \int_{\mathbb{R}^3} \dd u' \mathbf{k}_\theta(V_0^u(s'),u')    e^{-\lambda s'} \sup_{0\leq s\leq T_0}\Vert e^{\lambda s} w f(s)\Vert_\infty   \notag \\
   & \leq \int_{t_1}^{T_0} \dd s e^{-\nu(T_0-s)} \int_{\mathbb{R}^3} \dd u \mathbf{k}_\theta(V_0(s),u) e^{-\lambda s} \sup_{0\leq s\leq T_0}\Vert e^{\lambda s} w f(s)\Vert_\infty \notag \\
    & \leq    o(1)e^{-\lambda T_0}\sup_{0\leq s\leq T_0}\Vert e^{\lambda s} w f(s)\Vert_\infty. \label{u_K1_s_small}
\end{align}

Next we decompose the $\dd u$ integral into $\mathbf{1}_{|u|> N \text{ or } |V_0(s)-u|\leq \frac{1}{N}} + \mathbf{1}_{|u|\leq N, \ |V_0(s)-u|>\frac{1}{N}}$. Applying \eqref{k_theta} and \eqref{K_N_small} in Lemma \ref{lemma:k_theta}, the contribution of the first term reads
\begin{align}
   &\eqref{u_K1}\mathbf{1}_{|u|>N \text{ or }|V_0(s)-u| \leq \frac{1}{N}} \notag\\
   & \leq \int_{t_1}^{T_0} \dd s e^{-\nu(T_0-s)} \int_{|u|>N \text{ or }|V_0(s)-u|\leq \frac{1}{N} } \dd u \mathbf{k}_\theta(V_0(s),u) \notag \\
   &\times \int^s_{t_1^u} e^{-\nu(u)(s-s')}  e^{-\lambda s'} \dd s' \sup_{0\leq s\leq T_0}\Vert e^{\lambda s} w f(s)\Vert_\infty \notag \\
   & \leq o(1)\int_{t_1}^{T_0} \dd s e^{-\nu(T_0-s)} e^{-\lambda s} \sup_{0\leq s \leq T_0}\Vert e^{\lambda s}w f(s)\Vert_\infty \notag \\
   & \leq o(1) e^{-\lambda T_0} \sup_{0\leq s\leq T_0}\Vert e^{\lambda s}w f(s)\Vert_\infty. \label{u_K1_u_small}
\end{align}

Next we decompose the $\dd u'$ integral into $\mathbf{1}_{|u'|\geq N \text{ or } |V_0^u(s')-u'|\leq \frac{1}{N}} + \mathbf{1}_{|u'|\leq N, \ |u'-V_0^u(s')|>\frac{1}{N}}$. The contribution of the first term reads
\begin{align}
   &\eqref{u_K1}\mathbf{1}_{|u'|\geq N \text{ or }|u'-V_0^u(s')|\leq\frac{1}{N}} \notag\\
   & \leq o(1)\int_{t_1}^{T_0} \dd s e^{-\nu(T_0-s)} \int_{\mathbb{R}^3} \dd u \mathbf{k}_\theta(V_0(s),u) \int^s_{t_1^u} e^{-\nu(u)(s-s')}  e^{-\lambda s'}\dd s' \sup_{0\leq s\leq T_0}\Vert e^{\lambda s} w f(s)\Vert_\infty   \notag\\
   & \leq   o(1)\int_{t_1}^{T_0} \dd s e^{-\nu(T_0-s)}  e^{-\lambda s} \sup_{0\leq s\leq T_0} \Vert e^{\lambda s} w f(s)\Vert_\infty    \notag\\
   & \leq  o(1) e^{-\lambda T_0} \sup_{0\leq s\leq T_0}\Vert e^{\lambda s}w f(s)\Vert_\infty. \label{u_K1_u_prime_small}
\end{align}

Now we consider the intersection of all other cases, where we have $|u-V_0(s)|>\frac{1}{N}, \ |u|\leq N,$ $s'<s-\delta$ and $|u'|<N, \  |V_0^u(s')-u'|>\frac{1}{N}$. In such case by \eqref{k_N_upper_bdd} we have
$$
\mathbf{k}_\theta(V_0(s),u)w(V_0^u(s'))\mathbf{k}(V_0^u(s'),u')\leq C_N.
$$
We compute such contribution in \eqref{u_K1} as
\begin{align}
    &  \eqref{u_K1}\mathbf{1}_{|u-V_0(s)|>\frac{1}{N}, \ |u|\leq N,\ s'<s-\delta, \ |u'|<N, \ |V_0^u(s')-u'|>\frac{1}{N}} \notag\\
    &  \leq C_N  \int_{t_1}^{T_0} \dd s e^{-\nu (T_0 -s)} \int_{|u|\leq N} \dd u \times \int^{s-\delta}_{t_1^u} e^{-\nu(u)(s-s')} \dd s' \int_{|u'|<N} \dd u'   f(s', X_0^u(s'),u') . \label{u_K1_other}
\end{align}
%{\color{red}
With $|u'|<N$, we apply the same argument in \eqref{other_case}. We denote 
\[\tilde{\O}_u = \{x\in \mathbb{R}^3| x= X_0(s) - (s-s')u, \ |u'|\leq N, \ X_0^u(s') \in \bar{\O}\}.\]
We define a map
\[\mathcal{M}_u : X_0(s) - (s-s')u \in \tilde{\O}_u \to X_0^u(s') \in \bar{\O}.\]
Similarly to \eqref{omega_i_-} and \eqref{omega_i_+}, we have
\[\tilde{\O}_u = \bigcup_{i=-1-NT_0/L}^{1+NT_0/L} \tilde{\O}_{i}.\]

Then we apply the change of variable $u\to y=X_0(s)-(s-s')u$ with Jacobian 
\[\Big|\det\Big(\frac{\p X_0(s)-(s-s')u}{\p u} \Big) \Big| = (s-s')^3 \geq \delta^3 \]
to derive that
\begin{align}
 & \eqref{u_K1_other} \notag \\
 & \leq     C_{T_0,N,L,\delta,\O} \int_{t_1}^{T_0} \dd s e^{-\nu(T_0-s)} \int_{\tilde{\O}_u} \dd y  \int_{t_1^u}^{s-\delta} e^{-\nu(u)(s-s')} \dd s' \int_{|u'|<N} f(s',\mathcal{M}_u(y),u') \dd u'           \notag \\
  &\leq     C_{T_0,N,L,\delta,\O} \int_{t_1}^{T_0} \dd s e^{-\nu(T_0-s)} \sum_{i=-1-NT_0/L}^{1+NT_0/L}\int_{\tilde{\O}_i} \dd y  \int_{0}^{s-\delta} e^{-\nu(u)(s-s')} \dd s' \int_{|u'|<N} f(s',\mathcal{M}_u(y),u') \dd u'           \notag \\
  & \leq     C_{T_0,N,L,\delta,\O} \int_{t_1}^{T_0} \dd s e^{-\nu(T_0-s)} \sum_{i=-1-NT_0/L}^{1+NT_0/L}\int_{\O} \dd x  \int_{0}^{s-\delta} e^{-\nu(u)(s-s')} \dd s' \int_{|u'|<N} f(s',x,u') \dd u'   \notag \\
  & \leq C_{T_0,N,L,\delta,\O} \int_{t_1}^{T_0} \dd s e^{-\nu(T_0-s)}  \int_{0}^{s-\delta} e^{-\nu(u)(s-s')} \Vert f(s')\Vert_{L^2} \dd s'     \notag \\
  & \leq C_{T_0,N,L,\delta,\O} \sup_{0\leq s\leq T_0} e^{\lambda s}\Vert f(s)\Vert_{L^2} \int_{t_1}^{T_0} \dd s e^{-\nu(T_0-s)}   e^{-\lambda s} \notag \\
  &\leq  C_{T_0,N,L,\delta,\O} e^{-\lambda T_0} \sup_{0\leq s\leq T_0} e^{\lambda s}\Vert f(s)\Vert_{L^2} .  \label{u_K1_other_bdd}
\end{align}
In the third inequality we have applied the change of variable $y \in \tilde{\O}_i \to x=M_u(y)\in \O$ with Jacobian $1$.
%}

Collecting \eqref{u_K1_s_small}, \eqref{u_K1_u_prime_small}, \eqref{u_K1_u_small} and \eqref{u_K1_other_bdd}, we have
\begin{equation}\label{u_K1_bdd}
\eqref{u_K1} \leq o(1) e^{-\lambda T_0} \sup_{0\leq s\leq T_0} \Vert e^{\lambda s}w f(s)\Vert_\infty + C(T_0)  e^{-\lambda T_0}\sup_{0\leq s\leq T_0} e^{\lambda s}\Vert f(s)\Vert_{L^2} .
\end{equation}

By the same computation, we have
\begin{equation}\label{u_K0_bdd}
 \eqref{u_K0} \leq  o(1) e^{-\lambda T_0} \sup_{0\leq s\leq T_0} \Vert e^{\lambda s}w f(s)\Vert_\infty + C(T_0) e^{-\lambda T_0}\sup_{0\leq s\leq T_0} e^{\lambda s}\Vert f(s)\Vert_{L^2}.   
\end{equation}

We combine \eqref{u_initial_bdd}, \eqref{u_g_bdd}, \eqref{u_bdr_bdd}, \eqref{u_K0_bdd} and \eqref{u_K1_bdd} to conclude the estimate for \eqref{K_1}:
\begin{equation}\label{K1_bdd}
\begin{split}
   \eqref{K_1} & \leq C(\theta)e^{-\frac{\nu_0 T_0}{2}} \Vert w f_0\Vert_\infty + o(1)e^{-\lambda T_0}\sup_{0\leq s\leq T_0} \Vert e^{\lambda s}w f(s)\Vert_\infty \notag \\
   &+ C(T_0) e^{-\lambda T_0}\sup_{0\leq s\leq T_0} \Big\Vert e^{\lambda s}\frac{w}{\langle v\rangle}g(s) \Big\Vert_\infty   +C(T_0) e^{-\lambda T_0}\sup_{0\leq s\leq T_0} e^{\lambda s}\Vert f(s)\Vert_{L^2}.
\end{split}
\end{equation}

Similarly, we can have the same estimate for \eqref{K_0} as
\begin{equation}\label{K0_bdd}
\begin{split}
   \eqref{K_0} & \leq C(\theta)e^{-\frac{\nu_0 T_0}{2}} \Vert w f_0\Vert_\infty + o(1)e^{-\lambda T_0}\sup_{0\leq s\leq T_0} \Vert e^{\lambda s}w f(s)\Vert_\infty    \\
    & + C(T_0) e^{-\lambda T_0}\sup_{0\leq s\leq T_0} \Big\Vert e^{\lambda s} \frac{w}{\langle v\rangle} g(s) \Big\Vert_\infty +C(T_0)  e^{-\lambda T_0}\sup_{0\leq s\leq T_0} e^{\lambda s}\Vert f(s)\Vert_{L^2}.
\end{split}
\end{equation}

Last we collect \eqref{initial_g_bdd}, \eqref{f_bdr_bdd}, \eqref{K1_bdd} and \eqref{K0_bdd} to conclude that
\begin{align}
 &w(v) |f(T_0,x,v)|   \notag\\
 &   \leq C(\theta)e^{-\frac{\nu_0 T_0}{2}} \Vert w f_0\Vert_\infty +o(1)e^{-\lambda T_0}\sup_{0\leq s\leq T_0} \Vert e^{\lambda s}w f(s)\Vert_\infty     \label{C_theta} \\
  &  + C(T_0) e^{-\lambda T_0}\sup_{0\leq s\leq T_0}\Big\Vert e^{\lambda s}\frac{w}{\langle v\rangle }g(s) \Big\Vert_\infty +C(T_0) e^{-\lambda T_0}\sup_{0\leq s\leq T_0} e^{\lambda s}\Vert f(s)\Vert_{L^2}.  \notag
\end{align}

With the weight $w(v)=e^{\theta |v|^2}$, we bound the $L^2$ norm by Proposition \ref{prop:l2} as
\begin{align}
   \sup_{0\leq s\leq T_0}e^{\lambda s}\Vert f(s)\Vert_{L^2} & \leq C(T_0)\big[\Vert f_0\Vert_{L^2} + \int_0^{T_0} \Vert e^{\lambda s}g(s)\Vert_{L^2}\dd s \big] \notag \\
   &\leq C(T_0)\big[ \Vert wf_0\Vert_\infty +  \sup_{0\leq s\leq T_0} \Big\Vert e^{\lambda s} \frac{w}{\langle v\rangle} g(s)\Big\Vert_\infty \big]. \label{l2_term}
\end{align}

For given $0\leq t<\infty$, we denote
\begin{align*}
    \mathcal{R}_t := \Vert wf_0\Vert_\infty + \sup_{0\leq s\leq t}\Big\Vert e^{\lambda s} \frac{w}{\langle v\rangle} g(s) \Big\Vert_\infty.
\end{align*}

Recall that $C(\theta)$ in \eqref{C_theta} does not depend on $T_0$. We choose $T_0$ to be large enough such that  $C(\theta )e^{-\frac{\nu_0 T_0}{2}} < e^{-\frac{\nu_0 T_0}{4}}$. Then we further have
\begin{align}
   \Vert  wf(T_0)\Vert_\infty  &  \leq  e^{-\frac{\nu_0 T_0}{4}} \Vert wf_0\Vert_\infty    + o(1)e^{-\lambda T_0}\sup_{0\leq s\leq T_0} \Vert e^{\lambda s}w f(s)\Vert_\infty \notag \\
   &+  C(T_0) e^{-\lambda T_0} \sup_{0\leq s\leq T_0} \Big\Vert e^{\lambda s} \frac{w}{\langle v\rangle} g(s) \Big\Vert_\infty + C(T_0)e^{-\lambda T_0}\sup_{0\leq s\leq T_0} e^{\lambda s}\Vert f(s)\Vert_{L^2}.  \label{est_T0}
\end{align}

For $0\leq t\leq T_0$, with the same choice of $k=C_1 T_0^{5/4}$, it is straightforward to apply the same argument for $e^{\lambda t}w(v)|f(t,x,v)|$ to have:
\begin{align}
  \Vert wf(t)\Vert_\infty  & \leq C(\theta)e^{-\frac{\nu_0 t}{2}}\Vert wf_0\Vert_\infty + o(1) e^{-\lambda t} \sup_{0\leq s\leq t}\Vert e^{\lambda s} wf(s)\Vert_\infty \notag\\
  & + C(T_0)e^{-\lambda t} \sup_{0\leq s\leq t} \Big\Vert e^{\lambda s} \frac{w}{\langle v\rangle} g(s) \Big\Vert_\infty + C(T_0)e^{-\lambda t}\sup_{0\leq s\leq t} e^{\lambda s}\Vert f(s)\Vert_{L^2}. \label{est_t}
\end{align} 

For $t=mT_0$, we apply \eqref{est_T0} to have
\begin{align}
  &\Vert wf(mT_0)\Vert_\infty  \notag\\
  &  \leq e^{-\frac{\nu_0 T_0}{4}} \Vert wf((m-1)T_0)\Vert_\infty + C(T_0)e^{-\lambda T_0} \sup_{0 \leq s\leq T_0} \Big\Vert e^{\lambda s}\frac{w}{\langle v\rangle}g((m-1)T_0+s) \Big\Vert_\infty   \notag\\
  & + o(1)e^{-\lambda T_0} \sup_{0 \leq s\leq T_0} \Vert e^{\lambda s }w f((m-1)T_0+s) \Vert_\infty + C(T_0) e^{-\lambda T_0} \sup_{0\leq s\leq T_0}e^{\lambda s}\Vert f((m-1)T_0+s)\Vert_{L^2}  \notag\\
  & \leq e^{-\frac{\nu_0 T_0}{4}} \Vert wf((m-1)T_0)\Vert_\infty +o(1)e^{-\lambda m T_0} \sup_{0 \leq s\leq mT_0} \Vert e^{\lambda s }w f(s) \Vert_\infty + C(T_0) e^{-\lambda m T_0} \mathcal{R}_{mT_0} \notag\\
  & \leq e^{-2\frac{\nu_0 T_0}{4}} \Vert wf((m-2)T_0)\Vert_\infty \notag \\
  &+  e^{-\lambda m T_0} \Big[o(1)\sup_{0\leq s\leq mT_0} \Vert e^{\lambda s}w f(s)\Vert_\infty + C(T_0) \mathcal{R}_{mT_0} \Big]\times \big[1 + e^{-\frac{(\nu_0-4\lambda) T_0}{4}} \big] \notag\\
  & \leq \cdots \leq e^{-\frac{m\nu_0T_0}{4}} \Vert wf_0\Vert_\infty \notag\\
  & + e^{-\lambda m T_0} \Big[o(1)\sup_{0\leq s\leq mT_0} \Vert e^{\lambda s}w f(s)\Vert_\infty + C(T_0) \mathcal{R}_{mT_0} \Big]\times \sum_{i=0}^{m-1}  e^{-\frac{m(\nu_0-4\lambda)T_0}{4}} \notag\\
  & \leq  o(1)C(\nu_0)e^{-\lambda m T_0} \sup_{0\leq s\leq mT_0}\Vert e^{\lambda s}wf(s)\Vert_\infty + C(T_0)e^{-\lambda m T_0} \mathcal{R}_{mT_0}. \label{mT0}
\end{align}
In the fourth line we have applied the same computation as \eqref{l2_term} to the $L^2$ term.

For any $t>0$, we can choose $m$ such that $mT_0\leq t\leq (m+1)T_0$. With $t=mT_0+s$, $0\leq s\leq T_0$, we apply \eqref{est_t} to have
\begin{align}
  & \Vert wf(t) \Vert_\infty  = \Vert wf(mT_0 + s)\Vert_\infty \notag\\
  & \leq C(\theta)e^{\frac{-\nu_0 s}{2}}\Vert wf(mT_0)\Vert_\infty  + o(1)e^{-\lambda s} \sup_{0\leq s'\leq s} \Vert e^{\lambda s'}wf(mT_0+s')\Vert_\infty \notag\\
  &+ C(T_0) e^{-\lambda s}\sup_{0\leq s'\leq s}   \Big\Vert e^{\lambda s'}\frac{w}{\langle v\rangle}g(mT_0+s') \Big\Vert_\infty + C(T_0) e^{-\lambda s} \sup_{0\leq s'\leq s}e^{\lambda s'}\Vert f(mT_0+s')\Vert_{L^2} \notag\\
  & \leq o(1)C(\nu_0,\theta)e^{-\lambda (m T_0+s)} \sup_{0\leq s\leq mT_0}\Vert e^{\lambda s}wf(s)\Vert_\infty + C(T_0)e^{-\lambda (m T_0+s)} \mathcal{R}_{mT_0+s} \notag\\
  & \leq o(1)e^{-\lambda t} \sup_{0\leq s\leq t}\Vert e^{\lambda s}wf(s)\Vert_\infty + C(T_0) e^{-\lambda t} \mathcal{R}_t. \label{f_t_bdd}
\end{align}
In the fourth line we have applied \eqref{mT0} and \eqref{l2_term} to the $L^2$ term.

Since \eqref{f_t_bdd} holds for all $t$, we conclude that
\begin{align*}
  e^{\lambda t}\Vert wf(t)\Vert_\infty  & \leq C(T_0) e^{-\lambda t}\big[\Vert wf_0\Vert_\infty + \sup_{0\leq s\leq t}\Big\Vert e^{\lambda s}\frac{w}{\langle v\rangle}g(s)\Big\Vert_\infty\big].
\end{align*}
We conclude the proof of Proposition \ref{prop:linfty}.  
\end{proof}

\begin{proof}[\textbf{Proof of Theorem \ref{thm:linfty}}]
We consider the following iteration sequence:
\begin{equation}\label{iter_eqn}
\p_t f^{\ell+1} + v\cdot \nabla_x f^{\ell+1} + \mathcal{L} f^{\ell+1} = \Gamma(f^\ell,f^\ell).
\end{equation}
Proposition \ref{prop:linfty} gives the a priori $L^\infty$ estimate to \eqref{iter_eqn}. For the existence of such $L^\infty$ solution in Proposition \ref{prop:linfty}, we refer to \cite{G}, \cite{EGKM} for the construction using sequential argument.

We apply Proposition \ref{prop:linfty} to have
\begin{align*}
  \sup_{0\leq s\leq t} \Vert e^{\lambda s} wf^{\ell+1}\Vert_\infty  & \leq C\Vert wf_0\Vert_\infty + C\Big[ \sup_{0\leq s\leq t} \Vert e^{\lambda s}w f^{\ell}\Vert \Big]^2  .
\end{align*}
Here we have used Lemma \ref{lemma:gamma} to have
\begin{align*}
 \sup_{0\leq s\leq t}\Big\Vert  e^{\lambda s} \frac{w}{\langle v\rangle} \Gamma(f^\ell,f^\ell) \Big\Vert_\infty  &   \leq \sup_{0\leq s\leq t} \Big\Vert \frac{w}{\langle v\rangle}\Gamma(e^{\lambda s}f^{\ell},e^{\lambda s}f^{\ell}) \Big\Vert_\infty \\
 &\leq \Big[ \sup_{0\leq s\leq t} \Big\Vert e^{\lambda s}w f^{\ell} \Big\Vert_\infty \Big]^2 .
\end{align*}

When the initial condition satisfies \eqref{initial_condition} for $\delta$ small enough such that $2C\delta \ll 1$, then when $\Vert e^{\lambda s}w f^\ell\Vert_\infty \leq 2C\delta$, we have
\begin{align*}
    \sup_{0\leq s\leq t} \Vert e^{\lambda s} wf^{\ell + 1} \Vert_\infty \leq C\delta + 4C\delta^2 \leq 2C\delta.
\end{align*}
Hence we conclude the uniform-in-$\ell$ estimate:
\begin{equation*}%\label{uniform-in-ell}
\sup_{\ell} \sup_{0\leq s\leq t} \Vert e^{\lambda s} w f^{\ell} \Vert_\infty \leq 2C\delta.
\end{equation*}

By taking the difference $f^{\ell+1}-f^\ell$ and repeating the same argument, we have
\begin{align*}
  \sup_{0\leq s\leq t}\Vert e^{\lambda s} w (f^{\ell+1}-f^\ell)\Vert_\infty  &  \leq \big[  \sup_{0\leq s\leq t} \Vert e^{\lambda s} wf^{\ell + 1} \Vert_\infty +  \sup_{0\leq s\leq t} \Vert e^{\lambda s} wf^{\ell} \Vert_\infty\big] \\
  & \times  \sup_{0\leq s\leq t}\Vert e^{\lambda s} w (f^{\ell}-f^{\ell - 1})\Vert_\infty. 
\end{align*}
Thus $f^{\ell}$ is a Cauchy sequence, and we construct a solution $f$ such that $\Vert wf(t)\Vert_\infty \leq 2e^{-\lambda t}C\delta$. The uniqueness and non-negativity of solutions follow from the standard way.
\end{proof}

\noindent {\bf Acknowledgment:}\,
The research of Renjun Duan was partially supported by the General Research Fund (Project No.~14303321) from RGC of Hong Kong and a Direct Grant from CUHK. Hongxu Chen thanks Chanwoo Kim for helpful discussion.

\medskip

\noindent{\bf Conflict of Interest:} The authors declare that they have no conflict of interest.

\bibliographystyle{siam}
%\bibliography{citation}

\end{document}